\documentclass[12pt]{article}
\usepackage{amsmath, graphicx, amsfonts,amssymb, calrsfs}
\usepackage{amsfonts,mathrsfs, color, amsthm}
\usepackage{bm}
\usepackage{dsfont}
\usepackage{subcaption}

\usepackage{enumitem}
\usepackage{hyperref}
\hypersetup{
colorlinks   = true, 
urlcolor     = blue, 
linkcolor    = blue, 
citecolor    = red 
}

\def\sphere{S^{n-1}}
\def\N{\mathbb{N}}
\def\Rn{{\mathbb R^n}}
\def\R{\mathbb{R}}

\def\A{\mathds{A}}
\def\B{\mathds{B}}
\def\E{\mathds{E}}
\def\Ln{\mathscr{L}_C}
\def\Oc{\Omega_{C^\circ}}
\def\On{\Omega_C}

\def\R{\mathbb{R}}
\def\K{\mathscr{K}(C, \omega)}

\newtheorem{cor}{Corollary}[section]
\newtheorem{problem}{Problem}[section]

\newtheorem{theorem}{Theorem}[section]
\newtheorem{lemma}{Lemma}[section]
\newtheorem{remark}{Remark}[section]
\newtheorem{proposition}{Proposition}[section]
\newtheorem{corollary}{Corollary}[section]

\newtheorem{definition}{Definition}[section]

\def\B{\mathbb{B}}

\def\bt{\begin{theorem}}
\def\et{\end{theorem}}
\def\be{\begin{equation}}
\def\ee{\end{equation}}
\def\bl{\begin{lemma}}
\def\el{\end{lemma}}
\def\br{\begin{remark}}
\def\er{\end{remark}}
\def\bc{\begin{corollary}}
\def\ec{\end{corollary}}
\def\bd{\begin{definition}}
\def\ed{\end{definition}}
\def\bp{\begin{proposition}}
\def\ep{\end{proposition}}

\begin{document}
\title{The $L_p$ Minkowski problem for $C$-close sets: existence and continuity
\footnote{Keywords: $C$-close set, $C$-coconvex set, the $L_p$  Minkowski type problem, Monge-Amp\`{e}re equation.}}
\author{Wen Ai, Deping Ye  and Baocheng Zhu}
\date{}
\maketitle
	
\begin{abstract} 
Let $C$ be a pointed closed convex cone in $\mathbb{R}^n$ with nonempty interior, and let $S^{n-1}$ denote the unit sphere in $\mathbb{R}^n$. The $L_p$ Minkowski problem for $C$-close sets is to determine, for a real number $p$ and a nonzero finite Borel measure $\mu$ defined on $\Omega_{C^\circ}=S^{n-1}\cap \mathrm{int} C^{\circ}$, whether there exists a $C$-close set $\mathds{A}$ such that $\mu$ is the $L_p$ surface area measure of $\mathds{A}$.  In this paper, we will solve the problem for $p\in (0,1)$ and for $\mu$ being a nonzero finite Borel measure on $\Omega_{C^\circ}$. Moreover, we establish the continuity of solutions to the $L_p$ Minkowski problem for $p\in [0, 1]$ in several settings.

\vskip 2mm 2010 Mathematics Subject Classification: 52A20, 52A39.
\end{abstract}

\section{Introduction and overview of the main results}
\setcounter{equation}{0}
Although the classical Minkowski problem for convex bodies (i.e., convex compact subsets in $\Rn$ with nonempty interiors) originated from the seminal work of Minkowski \cite{Min1897,Min1903} at the turn of 20th century, its $L_p$ extension took almost a century to develop. In  \cite{Lut93}, Lutwak discovered the elegant variational formula leading to the $L_p$ surface area measure of convex bodies and initiated the study of the $L_p$ Minkowski problem. Lutwak's variational formula used the $L_p$ addition of convex bodies,  a natural extension of the Minkowski addition, which was first introduced by Firey \cite{F62}. Specifically,  for $p\geq 1$,  $a, b>0$, and $K, L$ two convex bodies containing the origin in their interiors, the $L_p$ addition is the convex body  $a\cdot_pK+_pb\cdot_pL$ defined by its support function:   \begin{align}\label{p-addition} 
h(a\cdot_pK+_pb\cdot_pL, \cdot)^p= ah(K, \cdot)^p+ b h(L, \cdot)^p. 
\end{align}  
Hereafter, for a closed convex set $E$, its support function  $h(E, \cdot):\Rn\rightarrow \R$ is defined by 
\begin{align} \label{support-E}
h(E, x)=\sup_{y\in E} \langle x,y \rangle  
\end{align} 
for $x\in \Rn $, where $ \langle x,y \rangle$ denotes the inner product of $x, y\in \Rn.$ When $p=1$, the $L_p$ addition reduces to the Minkowski addition. 

Let $V_n(E)$ denote the $n$-dimensional volume of $E$. The variational formula by Lutwak in \cite{Lut93} reads: for $p\geq 1$, and $K, L$ two convex bodies containing the origin in their interiors,  
\begin{equation} \label{p-variation-1}
\lim_{\varepsilon\rightarrow 0^+} \frac{V_n(K+_p\varepsilon\cdot _p L)-V_n(K)}{\varepsilon}=\frac{1}{p}\int_{\sphere} h(L, u)^ph(K, u)^{1-p}\,dS_{n-1}(K, u),
\end{equation} 
where $\sphere$ denotes the unit sphere and $S_{n-1}(K, \cdot)$ is the surface area measure of $K$ defined on $\sphere$. Note that
\begin{equation*}
S_{n-1}(K, \eta)=\mathscr{H}^{n-1}(\pmb\nu_K^{-1}(\eta))\,\,\,\text{for each Borel set} \,\,\,\eta\subseteq S^{n-1},
\end{equation*} 
where $\mathscr{H}^{n-1}$ denotes the $(n-1)$-dimensional Hausdorff measure and $\pmb\nu_K: \partial K\rightarrow S^{n-1}$ is the Gauss map of $K$. The $L_p$ surface area measure of $K$ for $p\in \R$ is then defined by  $$\,d S_{n-1, p}(K, \cdot)=h(K, \cdot)^{1-p}\,dS_{n-1}(K, \cdot).$$ The introduction of the $L_p$ surface area measure naturally motivates the $L_p$ Minkowski problem: {\em For a real number $p\in \R$ and a given nonzero finite Borel measure $\mu$  on $\sphere$, is it possible to find a convex body $K$ such that $\mu=S_{n-1, p}(K, \cdot)$? Moreover, if such convex bodies exist, what can be said about the uniqueness and continuity?} The classical and $L_p$ Minkowski problems have received extensive attention and also demonstrated their significance in many other areas, see e.g., \cite{Ale38,Ale42,BBCY19,C06,CW06,FJ38,HLW16,HLX15,HLYZ05,JLW15,JLZ16,LW13,LO95,LYZ04,U03,Zhu152,Zhu15,Zhu17} among others. The special case for $p=0$ is usually called the logarithmic Minkowski problem, and is particularly important in many aspects: it aims to characterize the cone-volume measure, see e.g., \cite{BHZ16, BH16,BH17,BLYZ13,CFL22,CLZ19, HL14,Stan02,Stan03,Sta08,Zhu14}. 

It took much longer to extend the Minkowski type problems from convex bodies to unbounded convex hypersurfaces. Motivated by the pioneer works \cite{KT14} by Khovanski\u{\i} and Timorin, and \cite{MR} by Milman and Rotem, Schneider in \cite{Sch18}  creatively developed an analogous Brunn-Minkowski theory for $C$-close sets, where the Minkowski problem for $C$-close sets is of central interest. Later, Yang, Ye and Zhu in \cite{YYZ22} took another step toward an $L_p$ Brunn-Minkowski theory for $C$-close sets. Due to the important applications and close connections of $C$-close sets to differential geometry, partial differential equations, commutative algebra and singularity theory  
(see, e.g., \cite{KK14,KR21,R00,R01,VK85}), the ($L_p$) Brunn-Minkowski theory for $C$-close sets (or more generally, unbounded closed convex sets) should be in great demand. In particular,  the body of work on the Minkowski type problems for unbounded closed convex sets continue growing, see e.g., \cite{CW95,HL21,P80,Sch18,Sch21,Sch24,Sch242,Sch243,SZ24,U84,WXZZ24,YYZ22}.  

We now introduce some basic terminologies in \cite{Sch18, YYZ22}. Let $C$ be a pointed closed convex cone in $\Rn$ with nonempty interior. Thus, $$C=\{\lambda x: \lambda\geq 0\ \text{and}\ x \in C\}$$ and  $C \cap (-C)=\{o\}$ with $-C=\{-y: y\in C\}$ and $o$ the origin of $\Rn$. Let $\Omega_{C^\circ}=\sphere \cap \mathrm{int} C^\circ,$ where $\mathrm{int}(E)$ denotes the interior of $E\subset\Rn$ and 
\begin{equation}\label{cone-11-dual}
C^\circ=\{x \in \R^n: \langle x,y \rangle  \leq 0\,\,\,\text{for all}\,\,\, y\in C\}
\end{equation} 
is the dual cone of $C.$ An unbounded closed convex set $\A \subseteq C$ is called a $C$-close set if $V_n(C\,\backslash\A)$ is finite. For convenience, we always let $A=C\,\backslash\A$, and call $A$ a $C$-coconvex set if $\A$ is $C$-close. If $A$  is bounded, then $\A$ is called a $C$-full set. The natural algebraic operations for $C$-close sets should resemble the $L_p$ addition in \eqref{p-addition}. Indeed, for $\A$ a $C$-close set, its support function $h_C(\A, \cdot): \Omega_{C^\circ}\rightarrow (-\infty, 0)$ defined by \eqref{support-E} is negative, that is, for $u\in \Omega_{C^\circ}$,  
\begin{align*} 
h_C(\A, u)=\sup_{y\in \A} \langle u, y \rangle .  
\end{align*}   
In view of this, one defines the support function of $A$ by $\overline{h}_C(A, \cdot)=-h_C(\A, \cdot)$. Then, the $p$-co-sum of $C$-coconvex sets for $p=1$ \cite{Sch18}  and for $0<p<1$  \cite{YYZ22} can be defined as follows:  
for $a, b>0$ and two $C$-close sets $\A_1$ and $\A_2$
with $A_1=C\,\backslash\A_1$ and $A_2=C\,\backslash\A_2$, define the $p$-co-sum of $A_1$ and $A_2$ by its support function 
\begin{equation*}
\overline{h}_C(a\cdot_pA_1 \oplus_p b\cdot_p A_2, u)^p= a\overline{h}_C(A_1, u)^p+ b\overline{h}_C(A_2, u)^p\ \ \ \text{for}\ \  u \in \Oc.
\end{equation*}  That is, for $p\in (0, 1]$, 
\begin{equation*}	
C\,\backslash (a\cdot_pA_1 \oplus_p b\cdot_p A_2)=C \cap \bigcap_{u\in \Oc} \big\{ x \in \mathbb{R}^n: x\cdot u\leq -\overline{h}_C(a\cdot_pA_1 \oplus_p b\cdot_p A_2, u)\big\}. 
\end{equation*} 
A variational formula similar to \eqref{p-variation-1}  (see \eqref{p-variation-2} in Section \ref{section-2} for details) leads to the $L_p$ surface area measure \cite{Sch18, YYZ22}. Namely, the $L_p$ surface area measure of a $C$-close set $\A$ for $p\in \R$ is defined by 
\begin{equation}\label{Sp}
dS_{n-1, p}(\A, \cdot)=(-h_C(\A, \cdot))^{1-p}dS_{n-1}(\A, \cdot), 
\end{equation} where ${S}_{n-1}(\A, \cdot)$ is the surface area measure of $\A$ defined by Schneider in \cite{Sch18}:  
\begin{equation*}
{S}_{n-1}(\A, \eta)=\mathcal{H}^{n-1} (
\pmb\nu^{-1}_{\A}(\eta)) \,\,\, \text{for any Borel set}\,\,\, \eta\subseteq \Oc,
\end{equation*} with  $\pmb\nu^{-1}_{\A}$ being the inverse Gauss map of $\A$. 
The following $L_p$ Minkowski problem for $C$-close sets (for $p\in \{0, 1\}$ by Schneider in \cite{Sch18} and for other $p\in \R$ by Yang, Ye and Zhu in \cite{YYZ22}) can be asked:  

\begin{problem}[{\bf The $L_p$ Minkowski problem for $C$-close sets}] \label{Lp-Minkowski-unbounded}  Let $\mu$ be a nonzero Borel measure defined on $\Omega_{C^\circ}$ and $p\in \R$. Is it possible to find a $C$-close set $\A$ such that $\mu=S_{n-1, p}(\A, \cdot)$? Moreover, if such $C$-close sets exist,  what can be said about the uniqueness and continuity?
\end{problem} Problem \ref{Lp-Minkowski-unbounded} is closely related to the following Monge-Amp\`{e}re equation:
\begin{equation*}
\det\big(\nabla^2 h(u)+h(u) I\big)=f(u)[-h(u)]^{p-1},
\end{equation*} 
for $u \in \Omega_{C^\circ}$, where  $h : \Omega_{C^\circ} \rightarrow (-\infty, 0)$ is the unknown convex function, $\nabla^2$ is the Hessian operator with respect to an orthonormal frame on $S^{n-1}$, $\det A$ denotes the determinant of $A$, and $I$ is the identity matrix.

When $p=1$, Problem \ref{Lp-Minkowski-unbounded} was solved by Schneider in \cite{Sch18} when $\mu$ is a nonzero finite Borel measure with its support concentrated on a compact subset of $\Oc$, and in \cite{Sch21} when $\mu$ is only assumed to be nonzero and finite.  Schneider also proved the uniqueness of solutions to Problem \ref{Lp-Minkowski-unbounded} for $C$-close sets in \cite{Sch18}, and the stability  of solutions to Problem \ref{Lp-Minkowski-unbounded} for $C$-full sets in \cite{Sch21}.  A solution to Problem \ref{Lp-Minkowski-unbounded} when $\mu$ is an infinite measure was provided in \cite{Sch24} by Schneider.  
When $p=0$, the $L_p$ surface area measure is just the cone-volume measure of $\A$, and Problem \ref{Lp-Minkowski-unbounded} is often called the logarithmic Minkowski problem. Schneider in \cite{Sch18} showed that every nonzero finite Borel measure on $\Oc$ is the cone-volume measure of a $C$-close set. He raised an open problem regarding the uniqueness of its solutions in \cite{Sch18}, and this has been completely solved by Yang, Ye and Zhu in \cite{YYZ22}. When $\mu$ is a nonzero finite Borel measure with a compact support, Yang, Ye and Zhu showed that Problem \ref{Lp-Minkowski-unbounded} can be solved for any $p \in \R$. Moreover, Yang, Ye and Zhu in \cite[Remark 5.1]{YYZ22} also showed that $\A_1=\A_2$ if $\A_1$ and $\A_2$ are two $C$-close sets and $p\in (0,1)$ such that $S_{n-1, p}(\A_1, \cdot)=S_{n-1, p}(\A_2, \cdot)$. These show  the uniqueness of Problem \ref{Lp-Minkowski-unbounded} for $p\in [0, 1]$, if it is solvable.  
 
Theorem \ref{ex&uniC-close} provides a solution to Problem \ref{Lp-Minkowski-unbounded} for $0\leq p\leq 1$ when $\mu$ is assumed to be a nonzero finite Borel measure on $\Oc.$  

\vskip 2mm \noindent {\bf Theorem \ref{ex&uniC-close}.} {\em Let $\mu$ be a nonzero finite Borel measure defined on $\Omega_{C^\circ}$. For $0 \leq p \leq 1$, there exists a unique $C$-close set $\A$ such that $\mu=S_{n-1, p}(\A, \cdot)$.} 

The proof of Theorem \ref{ex&uniC-close} heavily relies on the approximation techniques applied in \cite{Sch18, Sch21}. Such a method, together with the uniqueness of solutions to the $L_p$ Minkowski problem for $C$-close sets from Theorem \ref{ex&uniC-close}, motivates the problem regarding the continuity of the solutions to the $L_p$ Minkowski problem for $0\leq p\leq 1$. Let $\mathbb{N}$ be the set of all positive integers, $\mathbb{N}_0=\mathbb{N}\cup \{0\},$ and $\{\nu_i\}_{i \in \N_0}$ be a sequence of nonzero finite Borel measures on $\Oc$. By Theorem \ref{ex&uniC-close}, for $0\leq p\leq 1$, there exists a unique $C$-close set $\B_i$ such that $\nu_i=S_{n-1, p}(\B_i, \cdot)$ for each $i\in \mathbb{N}_0$. The continuity problem asks: {\em under what conditions on the sequence of measures $\{\nu_i\}_{i \in \N_0}$, does $\B_i$ converge to $\B_0$ (see Definition \ref{con-def})?} It will be proved in Lemma \ref{vague-con-Sp} that, if  $C$-compatible sets $\A_i$ converge to a $C$-compatible set $\A_0$, then $S_{n-1, p}(\A_i, \cdot)$ converges vaguely to $S_{n-1, p}(\A_0, \cdot)$ on $\Oc$ as $i \to \infty$. That is, for each continuous function $f: \Oc \to \R$ with compact support in $\Oc$, one has,
\begin{equation*}
\int_{\Oc} f(u)dS_{n-1, p}(\A_i, \cdot) \to \int_{\Oc} f(u)dS_{n-1, p}(\A_0, \cdot).
\end{equation*}
It will be written as $S_{n-1, p}(\A_i, \cdot) \to S_{n-1, p}(\A_0, \cdot)$ vaguely on $\Oc$.
Based on this observation, we can prove the following result.  

\vskip 2mm \noindent {\bf Theorem \ref{con-mu}.}  {\em Let $0\leq p\leq 1$. Assume that $\nu_i$, $i\in \N_0$, are nonzero finite Borel measures such that  $\nu_i$ converges to $\nu_0$ vaguely on $\Oc$ and  $\sup \{\nu_i(\Oc): i \in \N_0\}<\infty.$   If,  for each $i\in \mathbb{N}_0$,  $\B_i$ is the unique $C$-close set solving Problem \ref{Lp-Minkowski-unbounded} (i.e., $\nu_i=S_{n-1, p}(\B_i, \cdot)$), then $\B_i \rightarrow \B_0$ as $i\rightarrow \infty$.}

Besides Theorem  \ref{con-mu},  we also obtain several continuity results regarding Problem \ref{Lp-Minkowski-unbounded} in other settings. For example, in Theorem \ref{con-p}, we prove the continuity of solutions to Problem \ref{Lp-Minkowski-unbounded} when $p_i\in [0, 1]$ for $i\in \mathbb{N}_0$ such that $p_i\rightarrow p$. 

\vskip 2mm \noindent {\bf Theorem \ref{con-p}.} {\em Let $0\leq p_i\leq 1$ for $i \in \mathbb{N}_0$, and let $\nu$ be a nonzero finite Borel measure  defined on $\Omega_{C^\circ}$. Suppose that $\E_i$ for $i\in \mathbb{N}_0$ solves Problem \ref{Lp-Minkowski-unbounded} for $p_i$ (i.e., $\nu=S_{n-1, p_i}(\E_i, \cdot)$). Then $\E_i \rightarrow \E_0$  as $p_i\to p_0$. }
	 
\section{Background and Preliminaries}\label{section-2}
\setcounter{equation}{0}
Let $n\geq 2$ be a positive integer, and $|x|$ denote the Euclidean norm of $x\in \Rn$. By $\langle x,y \rangle$ we mean the standard inner product of $x, y\in \Rn$. 
Denote by $S^{n-1}=\left\{ x \in \mathbb{R}^n : |x|=1\right\}$ the unit sphere and $B_n=\{x\in \Rn: |x|\leq 1\}$ the unit Euclidean ball. For $E\subseteq \Rn$, its closure, interior, boundary, and  volume are denoted by  $\overline{E}$, $\text{int} E$, $\partial E$, and $V_n(E)$ respectively. The symbol $o$ stands for the origin in $\Rn$. A subset $K \subset \mathbb{R}^n$ is convex if $\lambda x+(1-\lambda)y \in K$ for all $x, y \in K$ and $0 \leq \lambda\leq 1$. For $u\in \sphere$ and $a\in \R$, let $H(u, a)=\{x\in \Rn:  \langle x, u \rangle =a\}.$ The set $H(u, a)$ is a hyperplane with normal vector $u$. Associated to $H(u, a)$ are halfspaces $H^{-}(u, a)$ and $H^+(u, a)$ given by  $H^{-}(u, a)=\{x\in \Rn: \langle x, u \rangle \leq a\}$ and respectively, $H^{+}(u, a)=\{x\in \Rn: \langle x,u \rangle \geq a\}.$   
 
Let $\mathbf{C}(X)$ define the family of continuous functions defined on $X$. By  $\mathbf{C}_b(X)$ we mean all bounded continuous functions defined on $X$.  Let $\{\nu_i\}_{i \in \N_0}$ be a sequence of finite Borel measures on $X$. We say that $\nu_i$ converges to $\nu_0$ weakly on $X$ if 
\begin{equation} \label{def-weakly}
\int_X f(x)d\nu_i(x) \to \int_X f(x)d\nu_0(x)\ \ \text{for all}\ \  f\in \mathbf{C}_b(X).
\end{equation} 
For simplicity, we can write it as $\nu_i \to \nu_0$ weakly on $X$.
  
Let $C$ be a pointed closed convex cone with nonempty interior. Define $\Omega _C =\sphere \cap \mathrm{int} C$ and $\Omega _{C^{\circ}}=\sphere \cap \mathrm{int} C^{\circ}$ with $C^{\circ}$ the dual cone of $C$ in \eqref{cone-11-dual}. A fixed vector $\xi\in \sphere \setminus C^{\circ}$ can be found so that  $\langle x, \xi \rangle >0$ for all $x \in C\, \backslash\, \{o\}$. For $M \subseteq C$, let   $M_t=M \cap H^{-}_{t}$ with $H^{-}_{t}=H^{-}(\xi, t).$ In particular, let $C_t=C \cap H^{-}_{t}$ and $C_t$ is bounded for every $t>0$. 

An unbounded closed convex set $o\notin E \subset C$ is a  $C$-compatible set \cite{LYZ23}  (also known as a $C$-pseudo cone \cite{Sch24}) if 
\begin{equation*}
E=C \cap \bigcap_{u \in \Oc} \{ H_u^-: E \subseteq H_u^- \},
\end{equation*}
where $H_u^{-}=H^{-}(u, a)$ for some $a\in \R$. Clearly, if $\A$ is a $C$-close set, then $\A$ must be a $C$-compatible set, and in particular,   
\begin{equation}\label{Ccom-c-close}
\A=C \cap \bigcap_{u \in \Oc}   H^-(u, h_C(\A, u)).
\end{equation} When $\Oc$ in \eqref{Ccom-c-close} is replaced by a compact set $\omega\subset \Oc$, then $\A$ is called a $C$-determined set by $\omega$ formulated by: 
\begin{equation*}
\A=C \cap \bigcap_{u \in \omega}H ^{-} \left( u, h_C(\A, u)\right). 
\end{equation*} 
Denote by $\mathscr{K}(C, \omega)$ the set of all $C$-determined sets by $\omega$. For convenience, if $\A\in \mathscr{K}(C, \omega),$ we will write $\A=[C, \omega, -h_C(\A, \cdot)]$. In general, if $f:\omega\rightarrow (0, \infty)$ is a positive continuous function on $\omega$, the Wulff shape of $f$, denoted by $[C, \omega, f]$, is defined by  
\begin{equation}\label{Wullf-shape}
[C, \omega, f]=C\cap \bigcap_{u\in \omega}H^-(u, -f(u)). 
\end{equation}  
Clearly, a $C$-determined set must be $C$-full.  

Let us now state some established facts regarding the $C$-determined sets, which are essential for later context. The first of such results is \cite[Lemma 7]{Sch18}, which is stated as follows.   

\begin{lemma}\label{L1} Let $\omega\subset \Oc$ be a compact set. There is a constant $\widetilde{t}_0>0$ with the following property: if $Q \in \mathscr{K}(C, \omega)$ and $V_n(C\, \backslash Q)=1$, then $C\, \cap\,H_{\widetilde{t}_0}\subset Q $.
\end{lemma}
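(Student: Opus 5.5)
The plan is to argue by contradiction. Suppose a point $x\in C\cap H_{t}$ is not in $Q$. Then the set $C\setminus Q$ must contain a whole cap of $C$ cut off by a hyperplane with normal in $\omega$. I will show this cap has volume growing like $t^n$, uniformly over $Q$. Choosing $\widetilde t_0$ large then contradicts $V_n(C\setminus Q)=1$. Here $H_t$ denotes the hyperplane $H(\xi,t)$, and $\xi$ is the fixed vector with $\langle x,\xi\rangle>0$ on $C\setminus\{o\}$.

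First, I collect two uniform constants coming from the compactness of $\omega\subset \Oc=\sphere\cap\mathrm{int}\,C^\circ$.

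(i) There is $c>0$ with $\langle u,y\rangle\le -c|y|$ for all $u\in\omega$ and $y\in C$. The function $(u,y)\mapsto\langle u,y\rangle$ is continuous and strictly negative on the compact set $\omega\times(C\cap\sphere)$, because $u\in\mathrm{int}\,C^\circ$ and $y\in C\setminus\{o\}$. Homogeneity in $y$ then gives the bound for all $y\in C$.

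(ii) For $u\in\omega$ and $s>0$, the set $C\cap H^+(u,-s)$ is a bounded cap of $C$, by (i). By homogeneity,
\begin{equation*}
V_n\big(C\cap H^+(u,-s)\big)=s^n\,V_n\big(C\cap H^+(u,-1)\big).
\end{equation*}
The map $u\mapsto V_n(C\cap H^+(u,-1))$ is positive and continuous on $\omega$, since $C$ has nonempty interior. Hence it has a positive minimum $m$ on $\omega$.

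Now set $\widetilde t_0$ so that $m(c\,\widetilde t_0)^n>1$. Let $Q=[C,\omega,-h_C(Q,\cdot)]$ with $V_n(C\setminus Q)=1$, and suppose $x\in C\cap H_{\widetilde t_0}$ with $x\notin Q$. By the definition of a $C$-determined set, there is $u\in\omega$ with $\langle x,u\rangle>h_C(Q,u)$. Every $y\in C$ with $\langle y,u\rangle>h_C(Q,u)$ lies outside $Q$. In particular, $C\setminus Q$ contains the set $\{y\in C:\langle y,u\rangle>\langle x,u\rangle\}$. This set has the same volume as $C\cap H^+(u,\langle x,u\rangle)$, since the boundary hyperplane has measure zero.

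Next I bound $-\langle x,u\rangle$ from below. Since $|\xi|=1$, we have $|x|\ge\langle x,\xi\rangle=\widetilde t_0$. By (i), $-\langle x,u\rangle\ge c|x|\ge c\,\widetilde t_0$. Applying (ii) with $s=-\langle x,u\rangle$ gives
\begin{equation*}
V_n(C\setminus Q)\ge m\,(c\,\widetilde t_0)^n>1,
\end{equation*}
which is a contradiction. Hence $C\cap H_{\widetilde t_0}\subset Q$.

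The only delicate step is the uniformity in (i) and (ii), namely that $c$ and $m$ do not depend on $u$. This is exactly where compactness of $\omega$ inside the open set $\Oc$ is used. Without it, caps with normals near $\partial C^\circ$ would become unbounded or degenerate, and $c$ or $m$ could tend to zero.
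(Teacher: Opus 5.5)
Your proof is correct. A point $x\in C\cap H_{\widetilde t_0}$ with $x\notin Q$ yields some $u\in\omega$ such that the open cap $\{y\in C:\langle y,u\rangle>\langle x,u\rangle\}$ lies in $C\setminus Q$, and by (i) and (ii) this cap has volume at least $m(c\,\widetilde t_0)^n>1$. The paper does not prove this lemma; it quotes it from Schneider \cite{Sch18}, and your cap-volume argument is a valid self-contained proof of it.

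One simplification: the continuity claim in (ii) is not needed. Since $C\cap H^+(u,-1)\supseteq C\cap B_n$ for every unit vector $u$, you get $m\ge V_n(C\cap B_n)>0$ directly. So compactness of $\omega$ is used only for the constant $c$ in (i). It also follows that $m$ can never tend to zero, contrary to your closing remark.
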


For a compact convex set $E$, its support function $h(E, \cdot)$ is given by \eqref{support-E}. We say that compact convex sets $E_i$ converge to a compact convex set $E$ if $h(E_i,\cdot)$ converges  to $h(E, \cdot)$ uniformly on $\sphere.$ We have the following 
Blaschke selection theorem (see e.g., \cite[Theorem 1.8.7]{Sch14}). 
\begin{theorem} \label{Blas-sel-com}   
Every uniformly bounded sequence of compact convex sets in $\Rn$ has a subsequence that converges to a compact convex set in $\Rn$. 
\end{theorem}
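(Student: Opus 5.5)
The Blaschke selection theorem will be proved by a diagonal argument on support functions restricted to the sphere, followed by identifying the limit function as a support function. Let $\{E_i\}$ be compact convex sets with $E_i\subseteq RB_n$ for all $i$, where $R>0$ is fixed. For $u,v\in\sphere$ we have
\[
|h(E_i,u)-h(E_i,v)|\le \sup_{y\in E_i}|\langle u-v,y\rangle|\le R|u-v|,
\qquad |h(E_i,u)|\le R .
\]
So the functions $h(E_i,\cdot)$ are uniformly bounded and equi-Lipschitz on $\sphere$. By the Arzel\`a--Ascoli theorem, which one can also prove directly by a diagonal extraction over a countable dense subset of $\sphere$ and then use the equi-Lipschitz bound to upgrade to uniform convergence, there is a subsequence $h(E_{i_j},\cdot)$ converging uniformly on $\sphere$ to a continuous function $h$.

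Next I show that $h$ is the support function of a compact convex set. Extend $h$ positively homogeneously to $\Rn$ by $h(x)=|x|\,h(x/|x|)$ and $h(o)=0$. Each $h(E_{i_j},\cdot)$, extended in the same way, is sublinear on $\Rn$. Homogeneous extensions of uniformly convergent functions on $\sphere$ converge pointwise on $\Rn$. Sublinearity is preserved under pointwise limits, since both
\[
h(x+y)\le h(x)+h(y) \quad\text{and}\quad h(\lambda x)=\lambda h(x)\ (\lambda\ge 0)
\]
pass to the limit. By the standard correspondence between sublinear functions and compact convex sets (e.g.\ \cite[Theorem 1.7.1]{Sch14}), the set
\[
E=\{y\in\Rn:\langle y,u\rangle\le h(u)\ \text{for all}\ u\in\sphere\}
\]
is a nonempty compact convex set with $h(E,\cdot)=h$.

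Finally, $|h|\le R$ gives $E\subseteq RB_n$. The uniform convergence $h(E_{i_j},\cdot)\to h(E,\cdot)$ on $\sphere$ is exactly convergence $E_{i_j}\to E$ in the sense defined above.

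The only step with real content is the one that relies on the cited correspondence: showing that a sublinear function is a support function. This requires a separation argument: if $h(u_0)>h(E,u_0)$ for some $u_0$, one must produce a point of $E$ contradicting this, via Hahn--Banach or a supporting-hyperplane argument. If that reference is not accepted, I would prove it by showing that for each $u_0$ there is a linear functional $\ell(x)=\langle y,x\rangle\le h(x)$ with $\ell(u_0)=h(u_0)$. This is the finite-dimensional Hahn--Banach theorem applied to the convex function $h$ at $u_0$, using a subgradient of $h$ at $u_0$. The resulting $y$ lies in $E$, which gives $h(E,u_0)\ge h(u_0)$; the reverse inequality is immediate from the definition of $E$.
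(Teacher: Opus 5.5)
Your argument is correct, and it differs from the route the paper relies on. The paper gives no proof here; it quotes \cite[Theorem 1.8.7]{Sch14}, where the selection theorem is proved metrically. That proof encloses the sets in a cube and subdivides it dyadically. At each scale only finitely many unions of subcubes can occur as ``the cubes meeting $E_i$'', so a diagonal extraction gives a subsequence that is Cauchy in the Hausdorff metric. One then uses completeness of the space of nonempty compact sets and the fact that Hausdorff limits of convex sets are convex. To reach the paper's notion of convergence, one finally needs $\delta(K,L)=\sup_{u\in\sphere}|h(K,u)-h(L,u)|$.

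You instead stay entirely with support functions. The equi-Lipschitz bound and Arzel\`a--Ascoli give a uniform limit, sublinearity survives pointwise limits, and the correspondence between sublinear functions and compact convex sets identifies the limit as $h(E,\cdot)$. Your route delivers convergence in exactly the sense the paper defines (uniform convergence of support functions on $\sphere$) with no translation step, and convexity of the limit is automatic. The cost is that it works only for convex sets and depends on that sublinear-function correspondence. The grid/completeness argument is purely metric and gives selection for arbitrary compact sets.

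Two details should be made explicit. First, the sets are tacitly nonempty; otherwise $h(E_i,\cdot)\equiv-\infty$ and your bound $|h(E_i,u)|\le R$ fails. Second, in your fallback argument, a subgradient $y$ of $h$ at $u_0$ yields $\langle y,u_0\rangle=h(u_0)$ and $\langle y,x\rangle\le h(x)$ for all $x$ only after using positive homogeneity. For example, test the subgradient inequality at $x=\lambda u_0$ with $\lambda>1$ and with $\lambda<1$.
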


There is an analogous Blaschke selection theorem for $C$-compatible sets. To this end, let us first introduce the convergence of $C$-compatible sets defined in \cite{LYZ23}. For $t>0$ and $\A$ a $C$-compatible set, if $\A\cap C_t\neq \emptyset,$ then $\A\cap C_t$ must be a compact convex set. 
\begin{definition}\label{con-def}
Let $\{\A_i\}_{i \in \N_0}$ be a sequence of $C$-compatible sets. If there exists $t_0>0$ such that $\A_i \cap C_{t_0}\neq \emptyset$ for all $i \in \N$ and for all $t>t_0$,
\begin{equation*}
\A_i\cap C_t \to \A_0 \cap C_t \,\,\, \text{as}\,\,\, i\to \infty
\end{equation*}
in the Hausdorff metric, then $\A_i$ converges to $\A_0$ as $i \to \infty$. This convergence will be written by $\A_i \to \A_0$ as $i \to \infty$.
\end{definition}

Let $b(\A)$ be the distance from origin to the $C$-compatible set $\A$ \cite{Sch24}: 
\begin{equation*}
b(\A)=\min \{r>0: rB_n \cap \A\neq \emptyset\}.
\end{equation*} 
It can be easily checked that 
\begin{equation}\label{hb}
-h_C(\A, \cdot)\leq b(\A) \,\,\,\text{on}\,\,\,  \Oc\,\,\,\text{and}\,\,\,\rho_C(\A,\cdot)\geq b(\A)\,\,\,\text{on}\,\,\, \Omega_C,
\end{equation} where $\rho_C(\A, v)=\sup\{r>0: rv\in C\,\backslash \A\}$ for $v \in \On$. 

Suppose that $\{\A_j\}_{j=1}^{\infty}$ is a sequence of $C$-compatible sets, such that 
\begin{align}\label{two-bounds-1}
0<\beta_0 \leq b(\A_j) \leq  \beta_1 <\infty\ \ \mathrm{for} \ \ j\in \mathbb{N}  
\end{align}  
with $\beta_0, \beta_1$ two constants. Let  $\{t_k\}_{k \in \mathbb{N}}$ be an increasing sequence diverging to $\infty$  with $t_1 > \beta_1$. Then  for $j, k \in \mathbb{N}$, one has  
\begin{equation*} 
\emptyset\neq \A_{j} \cap C_{t_k} \subset C_{t_k}.
\end{equation*} 
Thus, for each $k\in \mathbb{N}$, the sequence $\{\A_{j} \cap C_{t_k}\}_{j\in \mathbb{N}}$ is a bounded sequence of compact convex sets. Applying the Blaschke selection theorem to $\{\A_{j} \cap C_{t_k}\}_{j\in \mathbb{N}}$, a subsequence, which will still be denoted by $\{\A_{j} \cap C_{t_k}\}_{j\in \mathbb{N}}$ for convenience, can be found so that  $\A_{j} \cap C_{t_k} \rightarrow M_k$ as $j \rightarrow \infty$ with $M_k\subseteq C_{t_k}$ a convex body in $\mathbb{R}^n$. In fact, one can further have $M_k\subset M_{k+1}$ for all $k\in \mathbb{N}$. For more details on how to construct $M_k$, please see \cite{LYZ23,Sch18,Sch21}. Let $\A=\cup_{k\in \mathbb{N}}M_k$, which is clearly an unbounded closed convex set in $C$. Moreover, as $b(\A_j)\geq \beta_0$ for all $j\in \mathbb{N}$, one can get $b(\A)>0$ and hence $o\notin \A$. Thus,  $\A$ is a $C$-compatible set. Due to $M_k\subset M_{k+1}$ for all $k\in \mathbb{N}$,   $\A \cap C_{t_k}=M_k$ for each $k \in \mathbb{N}$. The above argument can be summarized to the following Blaschke selection theorem for $C$-compatible sets, which was given in \cite[Lemma 1]{Sch24}. 

\begin{theorem}[\bf The Blaschke selection theorem for $C$-compatible sets]\label{Blas-unb-1} 
Every sequence of $C$-compatible sets whose distances  from the origin are bounded and bounded away from $0$ has a subsequence that converges to a $C$-compatible set. More precisely, if $\{\A_j\}_{j=1}^{\infty}$ is a sequence of $C$-compatible sets satisfying \eqref{two-bounds-1}, then there exists a convergent subsequence $\{\A_{j_k}\}_{k\in \mathbb{N}}$ such that $\A_{j_k}\rightarrow \A$ with $\A$ a $C$-compatible set.  
\end{theorem}
 
We now summarize the convergence of the $C$-determined sets \cite[Lemma 5 and Lemma 6]{Sch18} as follows. By $\mathbf{C}^+(\omega)$ we mean the set of all positive continuous functions on $\omega$. 

\begin{lemma}\label{L56}
Let $\omega\subset \Oc$ be a compact set. The following statements hold true.
\vskip 1mm \noindent i) If $f_i\in \mathbf{C}^+(\omega)$ for all $i\in \mathbb{N}_0$ such that  $f_i\rightarrow f_0$ uniformly on $\omega$ as $i\rightarrow \infty$, then  $$[C, \omega, f_i]\rightarrow [C, \omega, f_0].$$
	
\vskip 2mm \noindent ii) If $\{\A_j\}_{j\in \mathbb{N}} \subset \mathcal{K}(C, \omega)$ such that $\A_j\rightarrow \A_0$ for some $C$-full set $\A_0$, then $\A_0 \in \mathcal{K}(C, \omega)$.
\end{lemma}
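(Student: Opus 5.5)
The statement has two parts, and I would prove each from the definition of convergence of $C$-compatible sets (Definition \ref{con-def}) together with elementary properties of Wulff shapes.

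\emph{Part i).} I would use homogeneity and monotonicity of the Wulff shape \eqref{Wullf-shape}: for $\lambda>0$ one has $[C,\omega,\lambda f]=\lambda[C,\omega,f]$, and $f\le g$ on $\omega$ implies $[C,\omega,g]\subseteq[C,\omega,f]$. Since $\omega$ is compact and $f_0$ is positive and continuous, $f_0\ge m>0$ on $\omega$. Uniform convergence then gives, for any $\delta\in(0,1)$ and all large $i$,
\begin{equation*}
(1-\delta)f_0\le f_i\le(1+\delta)f_0 .
\end{equation*}
Hence $(1+\delta)[C,\omega,f_0]\subseteq[C,\omega,f_i]\subseteq(1-\delta)[C,\omega,f_0]$, and so $b([C,\omega,f_i])$ stays bounded and bounded away from $0$.

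Intersecting with $C_t$ sandwiches $[C,\omega,f_i]\cap C_t$ between $\lambda_\pm\big([C,\omega,f_0]\cap C_{t/\lambda_\pm}\big)$, where $\lambda_\pm=1\pm\delta$. Choose $t_0$ so large that $H(\xi,t)$ meets the interior of $[C,\omega,f_0]$ for all $t\ge t_0$. Then the map $s\mapsto [C,\omega,f_0]\cap C_s$ is continuous in the Hausdorff metric near every $t>t_0$. It follows that both sandwiching sets tend to $[C,\omega,f_0]\cap C_t$ as $\delta\to0$. Since the sets are nested compact convex sets, the Hausdorff distance of the middle set to the limit is controlled by that of the outer sets, which gives the claim.

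\emph{Part ii).} Put $W=[C,\omega,-h_C(\A_0,\cdot)]$; clearly $\A_0\subseteq W$. The main step is to show that $h_C(\A_j,\cdot)\to h_C(\A_0,\cdot)$ uniformly on $\omega$, and this is where I expect the real work to be.

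Because $\A_0$ is $C$-full and $\A_j\to\A_0$, the distances $b(\A_j)$ are bounded, and the sets $C\setminus\A_j$ lie in a common $C_{t}$ for large $j$. To see the latter, I would use that $\A_j\cap C_t\to\A_0\cap C_t$ and that the boundary of $C\setminus\A_0$ in $C$ avoids $C\cap H(\xi,t)$ for large $t$.

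For $u$ in the compact set $\omega\subset\mathrm{int}\,C^\circ$, the supremum defining $h_C(\A_j,u)$ is then attained in $\A_j\cap C_{t'}$ for a fixed $t'$ independent of $j$ and $u$. This holds because $\langle x,u\rangle\le -c|x|$ on $C$ with $c>0$ uniform over $\omega$. The uniform convergence then follows from the Hausdorff convergence $\A_j\cap C_{t'}\to\A_0\cap C_{t'}$.

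Now let $x\in \mathrm{int}\,C\setminus\A_0$. For large $j$ we have $x\notin\A_j$, and since $\A_j\in\K$ there is $u_j\in\omega$ with $\langle x,u_j\rangle>h_C(\A_j,u_j)$. Pass to a subsequence with $u_j\to u\in\omega$. The uniform convergence and the continuity of $h_C(\A_0,\cdot)$ give $\langle x,u\rangle\ge h_C(\A_0,u)$. Hence every point of $W\cap(\mathrm{int}\,C\setminus\A_0)$ lies on one of the hyperplanes $H(u,h_C(\A_0,u))$ with $u\in\omega$.

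Finally, suppose some $y\in W\setminus\A_0$ exists. Since $\A_0$ is closed, $y$ has a neighbourhood disjoint from $\A_0$. Because $W$ is a convex set with nonempty interior, that neighbourhood contains interior points of $W$ in $\mathrm{int}\,C$. Any such point $z$ satisfies $\langle z,u\rangle<h_C(\A_0,u)$ for all $u\in\omega$, by compactness of $\omega$ and continuity. This contradicts the previous paragraph. Therefore $W=\A_0$, i.e.\ $\A_0\in\K$.
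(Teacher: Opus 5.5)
The paper does not prove this lemma. It quotes it from \cite{Sch18} (Lemmas 5 and 6 there), so there is no in-paper argument to compare yours with. Your proof is correct and self-contained.

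\textbf{Part i).} The sandwich $(1+\delta)[C,\omega,f_0]\subseteq[C,\omega,f_i]\subseteq(1-\delta)[C,\omega,f_0]$ comes from homogeneity and monotonicity of Wulff shapes. Three further ingredients finish the argument. First, $\lambda K\cap C_t=\lambda(K\cap C_{t/\lambda})$. Second, $s\mapsto[C,\omega,f_0]\cap C_s$ is continuous at every $t$ for which $H(\xi,t)$ meets $\mathrm{int}[C,\omega,f_0]$. Third, a set squeezed between two sets that are each $\varepsilon$-close to $L$ in the Hausdorff metric is itself $\varepsilon$-close to $L$. Together these give the convergence with no appeal to Blaschke selection, and the argument is quantitative in $\|f_i-f_0\|_\infty/\min_\omega f_0$.

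\textbf{Part ii).} You compare $\A_0$ with $W=[C,\omega,-h_C(\A_0,\cdot)]$ and use density of $\mathrm{int}\,W$ in $W$. A natural alternative would pass outer normals $u_j\in\omega$ at boundary points $x_j\to x$ to the limit. It would then characterize $C$-determined sets through their normals at boundary points in $\mathrm{int}\,C$. Your route instead needs $h_C(\A_j,\cdot)\to h_C(\A_0,\cdot)$ uniformly on $\omega$. You are right to prove this directly from $\sup_j b(\A_j)<\infty$ and $\langle x,u\rangle\le -c|x|$ on $C\times\omega$. The paper's \eqref{unif} is stated for limits already known to lie in $\K$, so citing it here would be circular.

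\textbf{Two small corrections, neither affecting validity.} Delete the claim in ii) that the sets $C\setminus\A_j$ lie in a common $C_t$. It is never used, and your sketched justification would not prove it: Hausdorff convergence on each $C_t$ together with fullness of $\A_0$ is not enough. For example, take $C$ to be the positive quadrant in $\R^2$ and
\[
\A_j=\{x\in C: x_1+x_2\ge 1,\ x_2\ge j^{-2}(1-x_1/j)\}.
\]
These $C$-full sets converge to $\{x\in C: x_1+x_2\ge 1\}$, yet $C\setminus\A_j$ contains $(s,0)$ for $1\le s<j$. Only the compactness of $\omega$ excludes such spikes. Also, the bound on $b(\A_j)$ comes from the condition $\A_j\cap C_{t_0}\neq\emptyset$ in Definition~\ref{con-def}, not from $\A_0$ being $C$-full.
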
 It has been proved (see, e.g., in the proof of \cite[Lemma 5.2]{LYZ23}) that $h_C(\A_i, \cdot)\to h_C(\A_0, \cdot)$ uniformly on $\omega$ if $\A_i \to \A_0$ for $\{\A_i\}_{i \in \N_0} \subset \K$ as $i \to \infty$. Indeed, it follows from Definition \ref{con-def} that $\A_i \to \A_0$ implies the existence of $t_0>0$, such that, for all $t>t_0$, $\A_i \cap C_t\to \A_0\cap C_t$ as $i \to \infty$ in the Hausdorff metric. We can take sufficiently large $t>t_0$ such that $\omega \subset \pmb\nu_{\A_i}(\A_i \cap C_t)$ for every $i \in \N_0$. Then \cite[Lemma 1.8.14]{Sch14} yields that  
\begin{equation} \label{unif}
h_C(\A_i, \cdot) \to h_C(\A_0, \cdot)\,\,\, \text{uniformly on}\,\,\,\omega.
\end{equation}

Let $\A$ be a $C$-compatible set. Then, $\A$ and its companion $A=C\setminus \A$ share the common effective boundary in $\partial \A \cap \partial A \subset \mathrm{int}C$.  Thus, it is natural to let  $$\overline{S}_{n-1, p}(A, \cdot)=S_{n-1, p}(\A, \cdot).$$ Let $\pmb\nu_{\A}: \partial \A\cap \text{int}C \rightarrow S^{n-1}$ be the Gauss map of $\A$. That is, for $\sigma \subseteq \partial \A\cap \text{int}C$,
\begin{equation*}
\pmb\nu_{\A}(\sigma)=\{ u \in \Oc: x \in H  \left( u, h_C(\A, u)\right)\ \ \text{for some}\ \  x \in \sigma \}. 
\end{equation*} Denote by $\pmb\nu^{-1}_{\A}$ the inverse Gauss map of $\A$. Schneider in \cite{Sch18} proved that   \begin{equation}\label{Vn}
V_n(C\,\backslash\A)=\frac{1}{n} \int_{\Oc} (-h_C(\A, u))dS_{n-1}(\A, u)=\frac{1}{n} \int_{\Oc} \overline{h}_C(A, u)d\overline{S}_{n-1}(A, u)=V_n(A). 
\end{equation} 
Let $\A_1, \A_2 \in \mathscr{K}(C, \omega)$, and $A_1=C\,\backslash\A_1$ and $A_2=C\,\backslash\A_2$. Similar to \eqref{p-variation-1}, one has the following variational formula \cite{YYZ22}: for $p\neq 0$,  
\begin{equation}\label{p-variation-2}
\lim_{\varepsilon\rightarrow 0^+} \frac{V_n(A_1 \oplus_p \varepsilon\cdot_p A_2)-V_n(A_1)}{\varepsilon}=\frac{1}{p}\int_{\omega} \overline{h}_C(A_2, u)^p  \,d\overline{S}_{n-1, p}(A_1, u).
\end{equation} 
In particular,   
\begin{align}\label{volume-Lp}
V_n(C\,\backslash \A)  = \frac{1}{n}\int_{\Oc} (-h_C(\A, u))^p dS_{n-1, p}(\A, u), 
\end{align} which can be obtained either by \eqref{p-variation-2} directly with $\A_1=\A_2=\A$, or by \eqref{Sp} and \eqref{Vn}.   

Yang, Ye and Zhu \cite[Lemma 4.3]{YYZ22} proved the weak convergence of the surface area measure of $C$-determined sets.  
\begin{lemma}\label{L2}
Let $\left\{ \A_i\right\}_{i \in \mathbb{N}_0} \subset \mathscr{K}(C, \omega)$. If $\A_i \rightarrow \A_0$, then $S_{n-1} \left( \A_i, \cdot\right) \rightarrow S_{n-1} \left( \A_0, \cdot\right)$ weakly on $\omega$ as $i \rightarrow \infty$. That is, for any continuous function $f: \omega\rightarrow \mathbb{R}$, one has,
\begin{equation*}
\int_{\omega} f(u)d S_{n-1} \left( \A_i, u\right)  \rightarrow 	\int_{\omega} f(u)dS_{n-1} \left( \A_0, u\right).
\end{equation*}
Moreover, if continuous functions $f_i: \omega\rightarrow \mathbb{R}\,\, (i \in \mathbb{N})$ satisfy that $f_i \rightarrow f$ uniformly on $\omega$, then 
\begin{equation*}
\int_{\omega} f_i(u)d S_{n-1} \left( \A_i, u\right)  \rightarrow\int_{\omega} f(u)d S_{n-1} \left( \A_0, u\right).
\end{equation*}
\end{lemma}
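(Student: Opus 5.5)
The plan is to reduce the statement to the classical weak continuity of surface area measures of convex bodies by truncating the sets $\A_i$ with a single half-space $H^-_t$ whose parameter $t$ does not depend on $i$. Set $K_i=\A_i\cap C_t$, a convex body for $t$ large. We then compare $S_{n-1}(K_i,\cdot)$ on $\sphere$ with $S_{n-1}(\A_i,\cdot)$ on $\omega$.

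\textbf{Step 1 (uniform localisation).} Since $\A_i\to\A_0$, the sets $\A_i\cap C_{t}$ converge in the Hausdorff metric for $t>t_0$. Hence $b(\A_i)\le \beta$ for some $\beta$ and all $i$, and by \eqref{hb} we get $-h_C(\A_i,u)\le \beta$ on $\omega$. Put
\[
c=\min\{-\langle x,u\rangle: u\in\omega,\ x\in C\cap\sphere\}.
\]
Then $c>0$, because $\omega$ is a compact subset of $\mathrm{int}\,C^\circ$. If $x\in\partial\A_i$ has outer normal $u\in\omega$, then $-c|x|\ge\langle x,u\rangle=h_C(\A_i,u)\ge-\beta$, so $|x|\le\beta/c$. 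Choosing $t>\max\{t_0,\beta/c\}$ (using $|\langle x,\xi\rangle|\le|x|$) places all such boundary points in the interior of $H^-_t$, for every $i$ simultaneously. This gives $\omega\subset\pmb\nu_{\A_i}(\A_i\cap C_t)$ uniformly in $i$, which is the fact quoted before \eqref{unif}. I expect this uniformity to be the main point needing care.

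\textbf{Step 2 (identifying the measures).} The boundary of $K_i$ splits into three pieces:
\begin{itemize}[label={}]
\item[(a)] the part of $\partial\A_i$ in $\mathrm{int}\,C\cap\mathrm{int}H^-_t$;
\item[(b)] the part on $H_t$, whose outer normal is $\xi$;
\item[(c)] the part on $\partial C$, whose outer normals lie in $\partial C^\circ\cap\sphere$.
\end{itemize}
Points where normal cones mix these directions form an $\mathscr H^{n-1}$-null set. Since $\A_i\in\K$, the measure $S_{n-1}(\A_i,\cdot)$ is concentrated on $\omega$. Therefore
\[
S_{n-1}(K_i,\cdot)\llcorner\Oc=S_{n-1}(\A_i,\cdot)\llcorner\omega ,
\]
and the rest of $S_{n-1}(K_i,\cdot)$ lives on the closed set $F=\{\xi\}\cup(\partial C^\circ\cap\sphere)$. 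Note that $\xi\notin C^\circ$, and $F$ is disjoint from $\omega$, so $F$ has positive distance from $\omega$.

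\textbf{Step 3 (passing to the limit).} By the choice of $t$, $K_i\to K_0=\A_0\cap C_t$ in the Hausdorff metric. The classical result (see \cite{Sch14}) then gives $S_{n-1}(K_i,\cdot)\to S_{n-1}(K_0,\cdot)$ weakly on $\sphere$.

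Given a continuous $f$ on $\omega$, extend it by Tietze's theorem to a continuous $g$ on $\sphere$ that vanishes on a neighbourhood of $F$. By Step 2,
\[
\int_\omega f\,dS_{n-1}(\A_i,\cdot)=\int_{\sphere}g\,dS_{n-1}(K_i,\cdot),
\]
and the right-hand side converges to the corresponding integral for $K_0$, which proves the first claim.

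For the second claim, the total masses $S_{n-1}(\A_i,\omega)\le S_{n-1}(K_i,\sphere)$ are uniformly bounded because they converge. Hence
\[
\Big|\int_\omega (f_i-f)\,dS_{n-1}(\A_i,\cdot)\Big|\le \|f_i-f\|_{\infty}\sup_i S_{n-1}(\A_i,\omega)\to0,
\]
and combining this with the first claim completes the argument.
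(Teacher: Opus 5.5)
The paper does not prove this lemma; it quotes it from \cite[Lemma 4.3]{YYZ22}. Your argument is correct.

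\textbf{Comparison with the paper.} Your mechanism is the one the paper itself uses in the proof of Lemma~\ref{vague-con-Sp}. A uniform bound $b(\A_i)\le\beta$ forces $\pmb\nu^{-1}_{\A_i}(\omega)$ into one fixed $C_t$; your constant $c$ plays the role of $\sin\tau$ in \cite[Lemma 7]{Sch24}. After that, $\A_i$ may be replaced by the truncation $\A_i\cap C_t$.

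The two arguments differ in the last step. After truncating, the paper passes back to the Wulff shapes $[C,\overline{\omega}(\tau/2),-h(\A_i\cap C_{t_\tau},\cdot)]$ and appeals to Lemma~\ref{L2} itself. You instead go directly to the classical weak continuity of $S_{n-1}(K,\cdot)$ for convex bodies. You dispose of the extra mass of $S_{n-1}(\A_i\cap C_t,\cdot)$ by noting that it sits on the closed set $F$, which is disjoint from $\omega$, and by extending $f$ continuously so that it vanishes on $F$. This is exactly what a self-contained proof of Lemma~\ref{L2} needs. Your handling of the second assertion, via the uniform bound on $S_{n-1}(\A_i,\omega)$, is also fine.

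\textbf{Two routine points to state explicitly.}

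First, the claim that the part of $S_{n-1}(\A_i\cap C_t,\cdot)$ outside $\Oc$ lives on $F$ does not follow from the concentration of $S_{n-1}(\A_i,\cdot)$ on $\omega$ alone. That measure only records normals lying in $\Oc$. You also need that every outer normal of $\A_i$ at a point of $\partial\A_i\cap\mathrm{int}\,C$ lies in $C^\circ$. This holds because $\A_i+C\subseteq\A_i$. Consequently, a normal of piece (a) that is not in $\Oc$ lies in $\partial C^\circ\cap\sphere\subseteq F$.

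Second, Step 1 must also cover $i=0$. This is harmless: for $t>t_0$, $\A_0\cap C_t$ is a Hausdorff limit of nonempty compact sets, hence nonempty, so $b(\A_0)$ obeys the same bound.

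It also helps to note that $\A_i\cap C_t$ has interior points. This justifies expressing its surface area measure as $\mathscr{H}^{n-1}$ of reverse spherical images.
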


In fact, the weak convergence of the $L_p$ surface area measure of $C$-determined sets by some compact set $\omega\subset \Oc$ can be proved as well. Recall that $\mathbf{C}(\omega)$ denotes the family of continuous functions defined on $\omega$.
 
\begin{lemma}\label{weak-convergence-Sp} Let $p \in \R$ and $\omega\subset\Oc$ be a compact set. Suppose that $\{\A_i\}_{i\in \N_0} \subset \K$ such that $\A_i \to \A_0$ as $i\to \infty$. Then,  $S_{n-1, p}(\A, \cdot) \to S_{n-1, p}(\A_0, \cdot)$ weakly on $\omega$.
\end{lemma}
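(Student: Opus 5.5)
The plan is to reduce the statement to Lemma \ref{L2} using the definition \eqref{Sp}. Since $dS_{n-1,p}(\A_i,\cdot)=(-h_C(\A_i,\cdot))^{1-p}\,dS_{n-1}(\A_i,\cdot)$, for any $f\in\mathbf{C}(\omega)$ we have
\begin{equation*}
\int_\omega f(u)\,dS_{n-1,p}(\A_i,u)=\int_\omega f(u)\big(-h_C(\A_i,u)\big)^{1-p}\,dS_{n-1}(\A_i,u).
\end{equation*}
So it suffices to set $f_i(u)=f(u)(-h_C(\A_i,u))^{1-p}$ and show that $f_i\to f_0:=f\,(-h_C(\A_0,\cdot))^{1-p}$ uniformly on $\omega$, with each $f_i$ continuous. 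Then the second part of Lemma \ref{L2} gives $\int_\omega f_i\,dS_{n-1}(\A_i,\cdot)\to\int_\omega f_0\,dS_{n-1}(\A_0,\cdot)$, which is exactly the claimed weak convergence.

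First I check continuity. Each $h_C(\A_i,\cdot)$ is a support function, hence convex and continuous on $\Oc$, and so on $\omega$. It is also strictly negative there, because $\A_i$ is $C$-compatible with $o\notin\A_i$ and $\omega\subset\mathrm{int}\,C^\circ$. Hence $(-h_C(\A_i,\cdot))^{1-p}$ is continuous on $\omega$ for every real $p$.

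Next I need uniform convergence. By \eqref{unif}, $h_C(\A_i,\cdot)\to h_C(\A_0,\cdot)$ uniformly on $\omega$. The map $s\mapsto s^{1-p}$ is uniformly continuous on any compact subinterval of $(0,\infty)$. Uniform convergence of the powers therefore follows once all the functions $-h_C(\A_i,\cdot)$ take values in a common interval $[c_1,c_2]$ with $0<c_1\le c_2<\infty$.

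The upper bound is easy: $-h_C(\A_i,\cdot)$ converges uniformly to the bounded continuous function $-h_C(\A_0,\cdot)$, so the family is uniformly bounded for large $i$. Alternatively one can use \eqref{hb} together with boundedness of $b(\A_i)$, which follows from the Hausdorff convergence of $\A_i\cap C_t$.

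The lower bound, needed when $1-p<0$, is the only delicate point, and I expect it to be the main (mild) obstacle. By compactness of $\omega$ and continuity, $-h_C(\A_0,\cdot)\ge 2c_1>0$ on $\omega$ for some $c_1$. Uniform convergence then gives $-h_C(\A_i,\cdot)\ge c_1$ on $\omega$ for all large $i$. Discarding finitely many indices does not affect the limit.

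With both bounds in hand, $|f_i-f_0|\le \|f\|_\infty\,\sup_\omega|(-h_C(\A_i,\cdot))^{1-p}-(-h_C(\A_0,\cdot))^{1-p}|\to0$, and Lemma \ref{L2} finishes the proof.
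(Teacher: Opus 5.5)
Your proposal is correct and follows essentially the same route as the paper. Both arguments use the uniform convergence from \eqref{unif}, then the strict negativity of $h_C(\A_0,\cdot)$ on the compact set $\omega$ to get uniform convergence of $g\,(-h_C(\A_i,\cdot))^{1-p}$, and finally the second part of Lemma \ref{L2} together with \eqref{Sp}; the only difference is that you spell out the uniform lower bound on $-h_C(\A_i,\cdot)$ for large $i$ (needed when $p>1$), which the paper leaves implicit.
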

\begin{proof}
We are required to show that $$\int_{\omega}g(u)dS_{n-1,p} (\A_i, u)\rightarrow \int_{\omega}g(u)dS_{n-1, p} (\A_0, u) \ \ \mathrm{for\ all}\ \ g\in \mathbf{C}(\omega).$$ To this end, as $\A_i \rightarrow \A_0$ and $\{\A_i\}_{i \in \mathbb{N}_0} \subset \mathscr{K}(C, \omega)$, then $h_C(\A_i, \cdot) \rightarrow h_C(\A_0, \cdot)$ uniformly on $\omega$ by \eqref{unif}. Note that $h_C(\A_0, \cdot)$ is continuous and strictly negative on $\omega.$ Then, for any $p\in \R$,  $[-h_C(\A_0, \cdot)]^{1-p}$ is finite and bounded from below  on $\omega$ by a positive number. Thus,  $[-h_C(\A_i, \cdot)]^{1-p} \to [-h_C(\A_0, \cdot)]^{1-p}$ uniformly on $\omega$. As $g\in \mathbf{C}(\omega)$, one has $$g(\cdot)[-h_C(\A_i, \cdot)]^{1-p} \to g(\cdot)[-h_C(\A_0, \cdot)]^{1-p}$$ uniformly on $\omega$. By Lemma \ref{L2}, one has 
\begin{equation*}
\int_{\omega}g(u)[-h_C(\A_i, u)]^{1-p}dS_{n-1} (\A_i, u)\rightarrow \int_{\omega}g(u)[-h_C(\A_0, u)]^{1-p}dS_{n-1} (\A_0, u).
\end{equation*}
Combining with \eqref{Sp}, one can get the desired result.
\end{proof}

The following result provides a solution to the $L_p$ Minkowski problem (i.e., Problem \ref{Lp-Minkowski-unbounded}) when the Borel measure $\mu$ defined on $\Oc$ is nonzero and finite, with its support concentrated on a compact subset of $\Oc$. For $p \notin \{0, 1, n\}$, Theorem \ref{LpCfull} was proven in \cite[Theorems 6.1 and 6.2]{YYZ22}. For $p=1$, Theorem \ref{LpCfull}, including both the existence and uniqueness of the solutions, was proven in \cite{Sch18}. For $p=0$, the existence and uniqueness of solutions were proven in \cite{Sch18} and \cite{YYZ22}, respectively. Let $\omega \subset \Oc$ be a compact set and let $\mathscr{L} = \{ Q \in \mathscr{K}(C, \omega) : V_n(C \setminus Q) = 1 \}$.
 
\begin{theorem}\label{LpCfull}
Let $\mu$ be a nonzero finite Borel measure defined on $\Omega_{C^\circ}$ whose support is concentrated on a compact set $\omega\subset \Omega_{C^\circ}$. Then, for real number 
$p\neq n$, there exists $\A\in \mathcal{K}(C, \omega)$ such that
\begin{equation*}
\mu={S}_{n-1, p}(\A, \cdot).
\end{equation*} Moreover, there exists $\A_0\in  \mathscr{L}$ such that $\A=c^{\frac{1}{n-p}}\A_0$ with 
\begin{equation*}
 c =  \frac{1}{nV_n(C\,\backslash \A_0)}\int_{\omega} (-{h}_C(\A_0, u))^p d\mu(u). 
\end{equation*}   If in addition $0\leq p\leq 1$, then $\A$ is uniquely determined. 
\end{theorem}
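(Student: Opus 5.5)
This is a variational argument on the normalized class $\mathscr{L}=\{Q\in\mathscr{K}(C,\omega): V_n(C\setminus Q)=1\}$, followed by a rescaling, and then a separate uniqueness step.

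\textbf{Step 1: the extremal problem.} Since $\mu$ lives on $\omega$, I would consider
\begin{equation*}
\Phi(Q)=\int_{\omega}(-h_C(Q,u))^p\,d\mu(u)\quad (p\neq 0),\qquad \Phi(Q)=\int_\omega \log(-h_C(Q,u))\,d\mu(u)\quad (p=0).
\end{equation*}
I would take its extremum over $\mathscr{L}$: the supremum for $p>0$ and the infimum for $p<0$, with the sign chosen as in Schneider's $p=1$ argument. Alternatively, one can fix $\Phi$ and optimize $V_n(C\setminus Q)$, which is the form matching the statement's normalization.

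\textbf{Step 2: compactness.} For $Q\in\mathscr{L}$, Lemma \ref{L1} gives $C\cap H_{\widetilde t_0}\subset Q$. Hence $b(Q)$ is bounded above, and $-h_C(Q,\cdot)$ is bounded on $\omega$. The volume normalization $V_n(C\setminus Q)=1$ forces $b(Q)$ to stay away from $0$, since a small $b$ would force small volume via the cone structure. Therefore Theorem \ref{Blas-unb-1} applies, and an extremizing sequence has a convergent subsequence. Its limit is $C$-full and belongs to $\mathscr{K}(C,\omega)$ by Lemma \ref{L56} ii). Volume is continuous, so the limit lies in $\mathscr{L}$. By \eqref{unif}, $\Phi$ is continuous, so the limit $\A_0$ is an extremizer.

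\textbf{Step 3: Euler--Lagrange equation.} For $g\in\mathbf{C}(\omega)$, I would perturb $\A_0$ through Wulff shapes $[C,\omega,(-h_C(\A_0,\cdot))+tg]$ (or their $L_p$ analogue), renormalized to unit volume. The derivative of the volume comes from \eqref{p-variation-2}, or from its $p=1$ version (Schneider's Wulff-shape variational lemma); Lemma \ref{L56} i) supplies the needed continuity. The derivative of $\Phi$ is elementary, because $h_C$ of a Wulff shape coincides with the defining function almost everywhere with respect to $S_{n-1}$. Setting the derivative to zero gives
\begin{equation*}
\mu=\lambda\,(-h_C(\A_0,\cdot))^{1-p}\,dS_{n-1}(\A_0,\cdot)=\lambda\,S_{n-1,p}(\A_0,\cdot)
\end{equation*}
for a Lagrange multiplier $\lambda$. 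Integrating $(-h_C)^p$ against both sides and using \eqref{volume-Lp} identifies $\lambda=c$. The $L_p$ measure is homogeneous: $S_{n-1,p}(s\A_0,\cdot)=s^{n-p}S_{n-1,p}(\A_0,\cdot)$. So $\A=c^{1/(n-p)}\A_0$ solves the problem; this step needs $p\neq n$.

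\textbf{Step 4: uniqueness for $0\le p\le1$.} For $p=1$ and $p=0$, I would cite Schneider \cite{Sch18} and \cite{YYZ22} respectively. For $0<p<1$, I would use the $L_p$ Minkowski-type inequality from the $p$-co-sum Brunn--Minkowski inequality, obtained through \eqref{p-variation-2}:
\begin{equation*}
\frac1n\int_\omega \overline h_C(A_2,\cdot)^p\,d\overline S_{n-1,p}(A_1,\cdot)\ \ge\ V_n(A_1)^{\frac{n-p}{n}}V_n(A_2)^{\frac pn},
\end{equation*}
with equality only for dilates. If the two sets have the same measure, applying this inequality both ways gives equal volumes and then equality, as in \cite[Remark 5.1]{YYZ22}.

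\textbf{Main obstacle.} I expect Step 3 to be the hardest. The perturbed Wulff shapes must remain in $\mathscr{K}(C,\omega)$, and the variational formula must hold for non-smooth perturbations $g$; this likely requires Schneider's lemma on the derivative of volume for Wulff shapes of $C$-coconvex sets. For $p\neq1$ there is the further task of turning the first variation into an identity tested against all of $\mathbf{C}(\omega)$.
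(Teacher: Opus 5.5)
The paper does not prove this statement. It quotes \cite[Theorems 6.1 and 6.2]{YYZ22} and \cite{Sch18}, and writes out only the uniqueness for $0<p<1$, in the proof of Theorem \ref{ex&uniC-close}. Your existence scheme is the standard Aleksandrov-type argument behind those references, and your Step 2 is essentially Lemma \ref{L2.2}. The lower bound on $b(Q)$ there genuinely uses $\omega$: by \eqref{hb}, every $Q\in\mathscr{K}(C,\omega)$ contains $b(Q)\,[C,\omega,1]$, so $1\le b(Q)^n V_n(C\setminus[C,\omega,1])$. For general $C$-close sets of unit covolume, $b$ can tend to $0$. Two steps, however, need repair.

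\textbf{Step 3.} The reason you give for differentiating $\Phi$ does not work. $\Phi$ integrates against $\mu$, but $-h_C([C,\omega,f_t],\cdot)=f_t$ holds only $S_{n-1}([C,\omega,f_t],\cdot)$-a.e. At every $u\in\omega$ where $H(u,-f_t(u))$ does not support $[C,\omega,f_t]$, one has $-h_C([C,\omega,f_t],u)>f_t(u)$, and $\mu$ is arbitrary, so it may charge such $u$. Hence $t\mapsto\Phi$ of the renormalized Wulff shape need not have the derivative you want.

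The missing ingredient is the \emph{pointwise} inequality $-h_C([C,\omega,f],u)\ge f(u)$ for all $u\in\omega$, with equality for $f=f_0:=-h_C(\A_0,\cdot)$ because $\A_0\in\mathscr{K}(C,\omega)$. Set $f_t=f_0+tg$ and
\[
\Psi(t)=\tfrac1p\,V_n\big(C\setminus[C,\omega,f_t]\big)^{-p/n}\int_\omega f_t(u)^p\,d\mu(u),
\]
with the logarithmic analogue for $p=0$. Monotonicity of $s\mapsto s^p/p$ and homogeneity of Wulff shapes then give $\Psi(t)\le\max_{Q\in\mathscr{L}}\tfrac1p\int_\omega(-h_C(Q,u))^p\,d\mu(u)=\Psi(0)$. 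This comparison is also what fixes the signs in your Step 1. Now $\Psi'(0)=0$ needs only Schneider's two-sided volume derivative for Wulff shapes from \cite{Sch18}. Formula \eqref{p-variation-2} alone is not enough, since it gives one-sided derivatives only, in the directions $\overline h_C(A_2,\cdot)^p$. The result is $\mu=c\,S_{n-1,p}(\A_0,\cdot)$ with $c$ exactly as in the statement.

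\textbf{Step 4.} Your inequality points the wrong way. For $C$-coconvex sets the Minkowski-type inequalities are reversed relative to convex bodies. \cite[(27)]{Sch18} is $\overline V_1(A_1,A_2)\le V_n(A_1)^{\frac{n-1}{n}}V_n(A_2)^{\frac1n}$, and \eqref{Lp-mixed-V} gives $\overline V_p(A_1,A_2)\le V_n(A_1)^{\frac{n-p}{n}}V_n(A_2)^{\frac pn}$. For instance, take $C=\R^2_+$, $A_1=C\cap\{x_1+x_2<1\}$ and $A_2=C\cap(\{x_1+2x_2<2\}\cup\{2x_1+x_2<2\})$. Then $\overline V_{1/2}(A_1,A_2)=3^{-1/2}<6^{-1/4}=V_2(A_1)^{3/4}V_2(A_2)^{1/4}$.

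The uniqueness argument is symmetric in $A_1,A_2$, so it still closes with the correct direction: $V_n(A_2)=\overline V_p(A_1,A_2)$ gives $V_n(A_2)\le V_n(A_1)$, and symmetrically. The paper's route to the inequality is also cheaper than yours. Jensen's inequality for the probability measure $\frac{\overline h_C(A_1,\cdot)}{nV_n(A_1)}\,d\overline S_{n-1}(A_1,\cdot)$, with concavity of $s\mapsto s^p$, reduces it to Schneider's $p=1$ inequality. The equality case ($A_1$ a dilate of $A_2$) is then inherited from the equality cases there. Extracting the equality case from the $p$-co-sum Brunn--Minkowski inequality via \eqref{p-variation-2}, as you propose, requires an additional argument that you did not supply.
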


\section{Existence of solutions to the \texorpdfstring{$L_p$}{} Minkowski problem for \texorpdfstring{$C$}{}-close sets} \label{sec-3}
\setcounter{equation}{0}
 In this section, we will establish the existence of solutions to Problem \ref{Lp-Minkowski-unbounded} for $\mu$ being a nonzero finite Borel measure on $\Oc.$ Our main result in this section is summarized in the following theorem. 
\begin{theorem}\label{ex&uniC-close}
Let  $0\leq p \leq 1$ and $\mu$ be a nonzero finite Borel measure defined on $\Omega_{C^\circ}$. There exists a unique $C$-close set $\A$ such that $\mu=S_{n-1, p}(\A, \cdot)$.  
\end{theorem}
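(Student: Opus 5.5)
Uniqueness for $0\le p\le 1$ is already known: the case $p=1$ is due to Schneider \cite{Sch18}, the case $p=0$ to \cite{YYZ22}, and $p\in(0,1)$ is \cite[Remark 5.1]{YYZ22}. So the work is existence, which I would obtain by compact exhaustion. The case $p=1$ is \cite{Sch21} and $p=0$ is \cite{Sch18}, but the argument below should work uniformly for $p\in[0,1]$.

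Choose compact sets $\omega_1\subset\omega_2\subset\cdots$ with $\omega_j\subset\mathrm{int}\,\omega_{j+1}$ (relative to $\sphere$) and $\bigcup_j\omega_j=\Oc$. Put $\mu_j=\mu\llcorner\omega_j$, which is nonzero for large $j$. Theorem \ref{LpCfull} gives $\A_j\in\mathscr{K}(C,\omega_j)$ with $S_{n-1,p}(\A_j,\cdot)=\mu_j$. By \eqref{volume-Lp},
\[
nV_n(C\setminus\A_j)=\int_{\omega_j}(-h_C(\A_j,u))^p\,d\mu(u)\le b(\A_j)^p\,\mu(\Oc),
\]
using \eqref{hb}. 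To control $b(\A_j)$ from above, fix a compact $\omega_1$ with $\mu(\omega_1)>0$, and let $x_j\in\A_j$ with $|x_j|=b(\A_j)$. For $u\in\omega_1$ there is $c>0$ with $-\langle x,u\rangle\ge c|x|$ for all $x\in C$. The cone-type set $C\cap\{y:\langle y-x_j,\xi\rangle\le 0\}$, or more precisely the convex hull of $o$ and a cap region near $x_j$, lies in $C\setminus\A_j$ up to lower-order pieces. I would instead use the cleaner route via the cone volume. The part of $C\setminus\A_j$ over $\pmb\nu_{\A_j}^{-1}(\omega_1)$ has volume $\frac1n\int_{\omega_1}(-h_C)\,dS_{n-1}=\frac1n\int_{\omega_1}(-h_C)^p\,d\mu$. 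Also $-h_C(\A_j,u)\ge c\,b(\A_j)$ on $\omega_1$, because every point of $\A_j$ has norm at least $b(\A_j)$. So
\[
c^p b(\A_j)^p\mu(\omega_1)\le nV_n(C\setminus\A_j)\le b(\A_j)^p\mu(\Oc),
\]
which does not bound $b$ by itself. The missing ingredient is a reverse volume estimate: a $C$-coconvex set with $b(\A)=b$ contains $b\,\cdot$(fixed set), since $C\setminus\A\supset C\cap H^-(\xi,\,b\cos\theta)$ with $\cos\theta=\min_{x\in C\cap S^{n-1}}\langle x,\xi\rangle>0$. This gives $V_n(C\setminus\A_j)\ge c_1 b(\A_j)^n$. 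Therefore $c_1 b(\A_j)^n\le b(\A_j)^p\mu(\Oc)/n$, and since $p<n$ this yields $b(\A_j)\le\beta_1$.

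The lower bound $b(\A_j)\ge\beta_0>0$ is the main obstacle. For $p<1$ I would use the previous display: $c_1b^n\ge c^pb^p\mu(\omega_1)/n$ alone gives only the wrong direction, so a genuine lower bound is needed. My plan is the following. If $b(\A_j)\to0$, then the $\A_j$ approach a set containing $o$, and I need a contradiction from $\mu(\omega_1)>0$. On $\omega_1$ we have $S_{n-1}(\A_j,\cdot)=(-h_C)^{p-1}\mu\ge \beta_1^{p-1}\mu$, because $p\le1$ and $-h_C\le\beta_1$. So $S_{n-1}(\A_j,\omega_1)\ge\beta_1^{p-1}\mu(\omega_1)>0$. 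On the other hand, the surface with normals in $\omega_1$ lies within $C_t$ for a fixed $t$; this is a Lemma \ref{L1}-type fact, using that normals in $\omega_1$ are strictly inside $C^\circ$. By the monotonicity of surface area of convex sets, its area is then at most a constant times the area of the boundary of $(C\setminus\A_j)\cap C_t$. As $b\to0$ with $V_n(C\setminus\A_j)\to0$, I would argue this area tends to $0$, using that the coconvex set shrinks to a lower-dimensional piece near the boundary of $C$, away from the normals in $\omega_1$. This contradiction gives $\beta_0$.

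Given both bounds, Theorem \ref{Blas-unb-1} yields a subsequence $\A_j\to\A$ with $\A$ $C$-compatible and $V_n(C\setminus\A)<\infty$ by the volume bound, so $\A$ is $C$-close. To identify the measure, fix $k$ and a continuous $f$ with compact support in $\mathrm{int}\,\omega_k$. For $j\ge k$ I would use Lemma \ref{vague-con-Sp} (vague convergence of $S_{n-1,p}$ under convergence of $C$-compatible sets), or argue locally via uniform convergence of support functions as in Lemma \ref{weak-convergence-Sp}. This gives $\int f\,dS_{n-1,p}(\A_j,\cdot)=\int f\,d\mu\to\int f\,dS_{n-1,p}(\A,\cdot)$. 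Hence $S_{n-1,p}(\A,\cdot)=\mu$ on $\Oc$, and uniqueness follows from the cited results.
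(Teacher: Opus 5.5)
\textbf{Gap: the lower bound on $b(\A_j)$.} You rightly call $b(\A_j)\ge\beta_0>0$ the main obstacle, but you do not establish it. It is essential: without it Theorem~\ref{Blas-unb-1} does not apply, and the limit may contain $o$.

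The route you sketch fails on two counts. First, its driver, $V_n(C\setminus\A_j)\to0$, is false at $p=0$, where $nV_n(C\setminus\A_j)=\mu(\omega_j)$ regardless of $b(\A_j)$. Second, and more seriously, neither small $b$ nor small co-volume forces the area of $\partial\big((C\setminus\A_j)\cap C_t\big)$, or even of $\partial\A_j\cap C_t$, to vanish. Take $\A_j=\overline{\mathrm{conv}}\big(\{x_j\}\cup(C\cap H^+(\xi,t))\big)$ with $x_j\in\mathrm{int}\,C$ and $x_j\to o$. Then $b(\A_j)\to0$ and $V_n(C\setminus\A_j)\to0$. Yet $\partial\A_j\cap C_t$, a cone with apex $x_j$ over $\partial C\cap H(\xi,t)$, has area tending to $\mathcal{H}^{n-1}(\partial C\cap C_t)>0$. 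Also, $(C\setminus\A_j)\cap C_t$ is not convex, so monotonicity of surface area cannot be applied to it.

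What does shrink is only the part of $\partial\A_j$ with normals in $\omega_1$, and that is exactly the quantitative statement you need. Localizing this part in a \emph{fixed} $C_t$ only gives a constant upper bound on $S_{n-1}(\A_j,\omega_1)$, which produces no contradiction.

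\textbf{Missing ingredient.} You need the scale-sensitive localization \cite[Lemma 7]{Sch24}: if $\omega_1\subset\overline{\omega}(\tau)$, then $\pmb\nu^{-1}_{\A}(\overline{\omega}(\tau))\subset\frac{b(\A)}{\sin\tau}B_n$ for every $C$-compatible $\A$. With it, your own inequality finishes the job. Put $K_j=C\cap\frac{b(\A_j)}{\sin\tau}B_n$, a convex body. Since $\partial\A_j\cap K_j\subset\partial(\A_j\cap K_j)$, monotonicity of surface area gives
\[
\beta_1^{p-1}\mu(\omega_1)\le S_{n-1}(\A_j,\omega_1)\le S(K_j)=b(\A_j)^{n-1}\,S\big(C\cap(\sin\tau)^{-1}B_n\big),
\]
which bounds $b(\A_j)$ below uniformly in $j$.

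The paper (Lemma~\ref{b_0<p<1}) gets the same conclusion differently. It normalizes $b(\A)=1$, so that $\pmb\nu^{-1}_\A(\overline{\omega}(\tau))\subset C_{t'}$ with $t'$ depending only on $C$ and $\tau$. It then bounds $S_{n-1,p}(\A,\overline{\omega}(\tau))\le S(C_{t'})$ using $-h_C\le b(\A)$ and $p\le 1$. Finally it uses the $(n-p)$-homogeneity of $S_{n-1,p}$ to obtain $b(\A)\ge(s_0/S(C_{t'}))^{1/(n-p)}$; this version does not need the upper bound first.

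\textbf{The rest of your plan} is sound and essentially parallels the paper. This covers the upper bound via $V_n(C\setminus\A)\ge c_1 b(\A)^n$, Blaschke selection, and identification of the limit measure by vague convergence. Passing the volume bound to the limit also needs the lower semicontinuity \cite[Lemma 2]{Sch24}, which you should cite.
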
 
Note that Theorem \ref{ex&uniC-close} was proved by Schneider in \cite{Sch18} for $p=0$ and  in \cite{Sch21} for $p=1$. We will use the approximation techniques in \cite{Sch18, Sch21} to prove Theorem \ref{ex&uniC-close}. To this end, we need some preparation.  Recall that  $\xi \in \Omega_{C}$ is fixed, such that, for all  $t>0$,  $$C_t=C\cap H^-_{t}=C \cap H^-(\xi, t)$$ is bounded. Let $b(\A)$ be the distance from the origin to the $C$-compatible set $\A$, given by  
\begin{equation*}
b(\A)=\min \{r>0: rB_n \cap \A\neq \emptyset\}.
\end{equation*}  
Let $\angle (u, \widetilde{u})$ be the angle between $u$ and $\widetilde{u}$, and $\delta_C(u)=\min \{\angle (u, \widetilde{u}):\ \widetilde{u} \in \partial \Oc\}$ be the spherical distance of $u\in \Oc$ to $\partial \Oc$. For $\tau>0$, let  
\begin{align}
\label{def-omega-tau-1} \overline{\omega}(\tau)=\{u \in \Oc: \delta_C(u) \geq \tau\}.
\end{align}  

We shall need the following result regarding the bounds of $b(\A)$.  Its proof is similar to that in \cite[Lemma 9]{Sch24}.  

\begin{lemma}\label{b_0<p<1} 
 Let  $0\leq p\leq 1$, 
$\mu$ be a nonzero finite Borel measure defined on $\Oc$, and  $\A$ be a $C$-close set such that $\mu=S_{n-1, p}(\A, \cdot)$. Let $\tau>0$ be such that $\overline{\omega}(\tau) \subset \Oc$ and 
\begin{equation}\label{low-c-cc}
S_{n-1, p}(\A, \overline{\omega}(\tau))=s_0>0.
\end{equation} 
Then, one can find positive finite constants $\beta_2$ and $\beta_3$, depending on $C$ and $\tau$ only, such that
\begin{equation*}
\beta_2\leq b(\A) \leq \beta_3.
\end{equation*}
\end{lemma}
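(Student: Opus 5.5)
The plan is to get both bounds from two elementary geometric facts. The first is a uniform angle estimate between $\overline{\omega}(\tau)$ and $C$. The second is the comparison $-h_C(\A,\cdot)\le b(\A)$ from \eqref{hb}. These are then combined with the volume identity \eqref{volume-Lp} for the upper bound and with the definition \eqref{Sp} of $S_{n-1,p}$ for the lower bound. Write $b=b(\A)$ and let $x_0\in\A$ with $|x_0|=b$.

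\emph{Step 1 (angle estimate).} Since $\overline{\omega}(\tau)$ is a compact subset of $\mathrm{int}\,C^\circ$, the function $(u,x)\mapsto -\langle u,x\rangle$ is positive on the compact set $\overline{\omega}(\tau)\times (C\cap \sphere)$. Hence there is $c=c(C,\tau)>0$ with
\begin{equation*}
\langle u,x\rangle\le -c|x| \quad\text{for all } u\in \overline{\omega}(\tau),\ x\in C.
\end{equation*}

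\emph{Step 2 (upper bound).} Since $\A\cap bB_n$ contains only boundary points, $C\cap \mathrm{int}(bB_n)\subset C\setminus \A$. Therefore $V_n(C\setminus\A)\ge \kappa_C b^n$, where $\kappa_C=V_n(C\cap B_n)>0$. On the other hand, \eqref{volume-Lp} and $-h_C(\A,\cdot)\le b$ give
\begin{equation*}
V_n(C\setminus\A)=\frac1n\int_{\Oc}(-h_C(\A,u))^p\,d\mu(u)\le \frac1n\, b^p\,\mu(\Oc),
\end{equation*}
using $p\ge0$. Combining the two inequalities gives $b^{n-p}\le \mu(\Oc)/(n\kappa_C)$. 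Since $n-p>0$, this yields $\beta_3$.

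\emph{Step 3 (lower bound).} By \eqref{Sp} and $0\le 1-p$,
\begin{equation*}
s_0=\int_{\overline{\omega}(\tau)}(-h_C(\A,u))^{1-p}\,dS_{n-1}(\A,u)\le b^{1-p}\,S_{n-1}(\A,\overline{\omega}(\tau)).
\end{equation*}
Now take any $x\in\pmb\nu_{\A}^{-1}(\overline{\omega}(\tau))$, with normal $u\in\overline{\omega}(\tau)$. Then $\langle x,u\rangle=h_C(\A,u)\ge -b$, and Step 1 gives $\langle x,u\rangle\le -c|x|$, so $|x|\le b/c$. Thus $\pmb\nu_{\A}^{-1}(\overline{\omega}(\tau))\subset \partial\A\cap (b/c)B_n$. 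This set is part of the boundary of the convex body $\A\cap (b/c)B_n$, and surface area is monotone under inclusion of convex bodies. Hence its $\mathcal H^{n-1}$-measure is at most $n\omega_n (b/c)^{n-1}$, where $\omega_n=V_n(B_n)$. This gives $s_0\le n\omega_n c^{1-n} b^{n-p}$, which yields $\beta_2$.

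The main point to handle carefully is Step 3. One must make sure the inverse Gauss image of $\overline{\omega}(\tau)$ really lies in the compact part of $\partial\A$, which is exactly what Step 1 provides. The monotonicity of surface area must also be applied to a genuine convex body, namely $\A\cap (b/c)B_n$, if necessary after noting that this set is nonempty and adding a small ball to give it interior. Note also that the resulting constants depend on $s_0$ and $\mu(\Oc)$ as well as on $C$ and $\tau$. This dependence is unavoidable by scaling and is what the statement implicitly allows.
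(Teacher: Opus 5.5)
Your proof is correct and follows essentially the same route as the paper. The upper bound is the same chain $nV_n(C\setminus\A)\le b(\A)^p\mu(\Oc)$ together with $V_n(C\setminus\A)\ge b(\A)^n V_n(C\cap B_n)$, which the paper writes via $\int_{\On}\rho_C(\A,v)^n\,dv$. The lower bound is the same localization of $\pmb\nu_{\A}^{-1}(\overline{\omega}(\tau))$ combined with monotonicity of surface area.

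The only difference is cosmetic. You prove the localization directly by a compactness angle estimate and compare with the ball $(b/c)B_n$. The paper instead normalizes $b(\A)=1$, cites Schneider's lemma to place the inverse Gauss image in $C_{t'}$, and rescales by homogeneity.

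Your caveat about the constants is also accurate. The paper's own constants $\beta_2=(s_0/S(C_{t'}))^{1/(n-p)}$ and $\beta_3$ depend on $s_0$ and $\mu(\Oc)$ as well, despite the statement's wording.
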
 

\begin{proof} 
We consider the upper bound of $b(\A)$ first.   
For $0\leq p\leq 1$, by \eqref{Sp} and \eqref{hb}, one has
\begin{align*}
\mu(\Oc)&=S_{n-1, p}(\A, \Oc)\\
&= \int_{\Oc} [-h_C(\A, u)]^{1-p}dS_{n-1}(\A, u)\\
&=\int_{\Oc} [-h_C(\A, u)]^{-p} [-h_C(\A, u)] dS_{n-1}(\A, u)\\
&\geq b(\A)^{-p}  \int_{\Oc} [-h_C(\A, u)] dS_{n-1}(\A, u)\\
&=b(\A)^{-p} \cdot n V_n(C\,\backslash \A).
\end{align*}
Together with \eqref{hb} and  the fact that 
$nV_n(C\,\backslash \A)= \int_{\On} \rho_C(\A, v)^ndv$  (see \cite[page 14]{LYZ23}), one has, 
\begin{align} \label{unif-upp-01}
\mu(\Oc)\geq  b(\A)^{-p} \cdot \int_{\On} \rho_C(\A, v)^ndv\geq b(\A)^{n-p} \int_{\On} dv.
\end{align}
Therefore, for $0\leq p\leq 1$, 
\begin{equation*}
b(\A)\leq \left(\frac{ \int_{\On} dv }{\mu(\Oc)} \right)^{\frac{1}{p-n}}=\beta_3,
\end{equation*}
where $\beta_3$ depends on $C$ only. 
 
For the lower bound, note that $b(b(\A)^{-1}\A)=1$ for any $C$-compatible set $\A$. Without loss of generality, suppose that $b(\A)=1$. Thus, there is a point $y \in \partial \A \cap B_n$. For any $x \in \pmb \nu^{-1} (\overline{\omega}(\tau))$,  we have 
$\pmb \nu_{\A}(x) \cap \overline{\omega}(\tau) \neq \emptyset$, and there exists $u \in \overline{\omega}(\tau)$ such that $h_C(\A, u)= \langle x, u \rangle$. Thus, the supporting hyperplane $H(u, h_C(\A, u))$ of $\A$ at $x$ satisfies $
\A \subset H^{-}(u, h_C(\A, u))$, and it separates the origin $o$ and $y$ (as $y \in \A$); see
Figure \ref{fig1}. 
\begin{figure}[ht]
\centering    \includegraphics[width=0.6\textwidth]{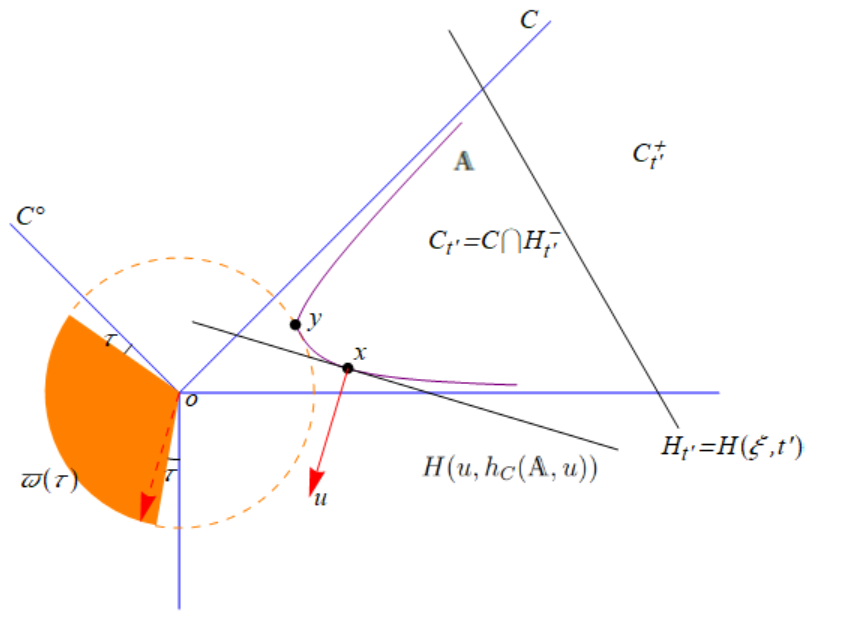}
\caption{}
\label{fig1}
\end{figure}

For any $u \in \overline{\omega}(\tau)$, one has
$h_C(\A, u)\geq \langle y, u \rangle \geq -1$. Define the $C$-determined set (by compact set $ \overline{\omega}(\tau)$) 
\begin{equation*}
\B=C \cap \bigcap_{u \in  \overline{\omega}(\tau)} \{z \in \R^n: \langle z, u \rangle \leq -1\}.
\end{equation*} 
Thus, by \cite[Lemma 9]{Sch24}, 
there exists $t'>0$ such that $\pmb\nu^{-1}_\A(\overline{\omega}(\tau)) \subset C_{t'}$ where $t'$ depends only on $C$ and $\tau$. It follows from the monotonicity of the surface area that  $S_{n-1}(\A, \overline{\omega}(\tau)) \leq S(C_{t'}),$ with $S(C_{t'})=S_{n-1}(C_{t'}, \sphere)$ being the surface area of the convex body $C_{t'}$. Together with \eqref{Sp} and \eqref{hb}, one has, if $b(\A)=1$ and $p\leq 1,$
\begin{align*}
S_{n-1, p}(\A, \overline{\omega}(\tau))
&=\int_{\overline{\omega}(\tau)} [-h_C(\A, u)]^{1-p} dS_{n-1}(\A, u)\\
& \leq \int_{\overline{\omega}(\tau)} b(\A)^{1-p} dS_{n-1}(\A, u)\\
&= S_{n-1}(\A, \overline{\omega}(\tau)) \leq S(C_{t'}).
\end{align*}
Note that the measure $S_{n-1, p}(\A, \cdot)$ has homogeneity on $\A$ as follows: $$S_{n-1, p}(t\A, \cdot)=t^{n-p} S_{n-1, p}(\A, \cdot).$$ For arbitrary $C$-compatible set $\A$ satisfying \eqref{low-c-cc}, due to $b(b(\A)^{-1}\A)=1$, one has
\begin{align} \label{Lp-surface-addition-1}
S(C_{t'}) \geq S_{n-1, p}(b(\A)^{-1}\A, \overline{\omega}(\tau)) =b(\A)^{p-n}S_{n-1, p}(\A, \overline{\omega}(\tau)) =b(\A)^{p-n}\cdot s_0.
\end{align}
Thus, for $p\leq 1$, 
\begin{equation} \label{formula-s0-1}
b(\A)\geq \left( \frac{s_0}{S(C_{t'})} \right)^{\frac{1}{n-p}}=\beta_2, 
\end{equation}
where $\beta_2$ depends on $C$ and $\tau$ only.
\end{proof}

We now prove Theorem \ref{ex&uniC-close} (only for $0<p<1$).  The proof is similar to those   in \cite{LYZ23,Sch18,Sch21}. For the sake of completeness, we include the proof here, but we will keep it brief.

\begin{proof}[Proof of Theorem \ref{ex&uniC-close}.] 
Let $\mu$ be a nonzero finite Borel measure defined on $\Oc,$ and let $\tau>0$ be such that $\mu(\overline{\omega}(\tau))>0.$  For $j\in \mathbb{N}$, define the measure $\mu_j$ by  
\begin{align} \label{definition-nu-j} 
\mu_j(\eta)=\mu(\omega_j \cap \eta)
\end{align}   
for each Borel set $\eta\subseteq \Oc$ with  $\{\omega_j\}_{j \in \mathbb{N}}$ being a sequence of open subsets of $\Oc$ satisfying \begin{align*}
\overline{\omega}(\tau) \subset \omega_1, \ \omega_j \subset \overline{\omega_j}\subset \omega_{j+1}\subset \Omega_{C^\circ}, \ \mathrm{and} \ \cup_{j \in \mathbb{N}} {\omega_j}=\Omega_{C^\circ}. 
\end{align*}
In particular, each $\mu_j$ is a nonzero finite Borel measure on $\Oc$ concentrated on a  compact set $\overline{\omega_j}$. Applying Theorem \ref{LpCfull} to $\mu_j$, a unique $C$-full set $\A_{\omega_j} \in \mathscr{K}(C, \overline{\omega_j})$ can be found such that,  for $0\leq p \leq 1$,  \begin{align}\label{relation-A-omega-j} 
d\mu_j=dS_{n-1, p}(\A_{\omega_j}, \cdot)=(-{h}_C(\A_{\omega_j}, \cdot))^{1-p} d{S}_{n-1} (\A_{\omega_j}, \cdot).
\end{align} 
Moreover, for each $j\in \mathbb{N},$
\begin{equation*}
S_{n-1, p}(\A_{\omega_j}, \overline{\omega}(\tau))=\mu_j(\overline{\omega}(\tau))=\mu(\omega_j \cap \overline{\omega}(\tau))=\mu(\overline{\omega}(\tau))=s_0>0.
\end{equation*} 
It follows from  Lemma \ref{b_0<p<1} that there exist two constants $\overline{\beta}_2$ and $\overline{\beta}_3$, depending on $C$ and $\tau$ only, such that, for  $j \in \mathbb{N}$,
\begin{equation}\label{seq-1-1}
\overline{\beta}_2 \leq b(\A_{\omega_j}) \leq \overline{\beta}_3.
\end{equation} 
It further yields the following fact:  if  $\{t_k\}_{k \in \mathbb{N}}$ is an increasing sequence diverging to $\infty$  with $t_1 > \overline{\beta}_3$, then, for $0< p<1$ and  for $j, k \in \mathbb{N}$, one has  
\begin{equation} \label{ctk}
\emptyset\neq \A_{\omega_j} \cap C_{t_k} \subset C_{t_k}.
\end{equation}   
By the Blaschke selection theorem for $C$-compatible sets (i.e., Theorem \ref{Blas-unb-1}), one can find a convergent subsequence, which will still be denoted by  $\{\A_{\omega_j}\}_{j\in \mathbb{N}}$ for convenience, such that  $\A_{\omega_j}\rightarrow \A$, where $\A$ is a $C$-compatible set. Note that $\A=\cup_{k \in \mathbb{N}}\, M_k$, where for each $k\in \mathbb{N}$, $M_k=\A\cap C_{t_k}$ is a convex body in $\Rn.$ Moreover, $M_k\subset M_{k+1}$ for all $k\in \mathbb{N}$ and $\A_{\omega_j} \cap C_{t_k} \rightarrow M_k$ as $j \rightarrow \infty$. In particular,
\begin{align} \label{conv-supp-c-full} 
h(\A_{\omega_j} \cap C_{t_k}, \cdot) \rightarrow h(M_k, \cdot) \ \ \mathrm{uniformly\ on} \ \ \sphere.
\end{align}

We now claim that $\A=\cup_{k \in \mathbb{N}}\, M_k$ is the desired solution to Problem \ref{Lp-Minkowski-unbounded}.  To this end, we first prove that $\A$ is a $C$-close set, i.e., $V_n(C\,\backslash \A)<\infty$. Since $S_{n-1}(Q, \Oc\,\backslash \omega)=0$ for $Q \in \K$ from \cite[(29)]{Sch18}, by \eqref{Sp} and \eqref{Vn}, one has 
\begin{align*}
V_n(C\,\backslash \A_{\omega_j})
&=\frac{1}{n}\int_{\overline{\omega_j}} [-h_C(\A_{\omega_j}, u)]dS_{n-1}(\A_{\omega_j}, u)\\
&=\frac{1}{n}\int_{\overline{\omega_j}} [-h_C(\A_{\omega_j}, u)]^{p}dS_{n-1, p}(\A_{\omega_j}, u)\\
&=\frac{1}{n}\int_{\overline{\omega_j}} [-h_C(\A_{\omega_j}, u)]^{p}d\mu_j(u).
\end{align*}
Together with \eqref{seq-1-1}, for $0<p<1$, it follows that 
\begin{equation*}
V_n(C\,\backslash \A_{\omega_j}) \leq \frac{\overline{\beta}_3^p}{n} \mu(\Oc)<\infty.
\end{equation*}
Since $\A_{\omega_j} \to \A$ as $j \to \infty$, by \cite[Lemma 2]{Sch24}, one can get that
\begin{align*}
V_n(C\,\backslash \A)\leq \liminf_{j \to \infty} V_n(C\,\backslash \A_{\omega_j}) <\infty.
\end{align*}
Note that $o\notin \A$. To see this, let $\{t_k\}_{k \in \N}$ be as in \eqref{ctk}. Then, for fixed $k\in \N$, 
\begin{align*}
\A_{\omega_j} \cap C_{t_k} \subset C_{t_k} \backslash C_{\overline{\beta}_2/2} \subset C_{t_k} \backslash C_{\overline{\beta}_2/3}.
\end{align*}
Together with $\A_{\omega_j} \to \A$ as $j \to \infty$, one has 
\begin{equation*}
\A \cap C_{t_k} \subset C_{t_k} \backslash C_{\overline{\beta}_2/2} \subset C_{t_k} \backslash C_{\overline{\beta}_2/3}.
\end{equation*}
Thus, $b(\A)>\overline{\beta}_2/3$ and $\A$ is a $C$-close set. 

Let us now prove that $\mu=S_{n-1,p}(\A, \cdot)$ for $0<p<1$. To this end, for any open set $\omega$ such that $\omega \subset \overline{\omega} \subset \Omega_{C^\circ}$, there exists  $j_0 \in \mathbb{N}$ such that $\omega \subset \overline{\omega} \subset \omega_{j_0}$. As $\A=\cup_{k\in \mathbb{N}}M_k,$ there exists $k_0 \in \mathbb{N}$ such that $\pmb\nu^{-1}_{\A}(\omega_{j_0})=\pmb\nu^{-1}_{M_{k}}(\omega_{j_0})$ for all $k\geq k_0$. For $k\geq k_0$ big enough, let 
\begin{equation*}
\widetilde{\A}_j=[C, \overline{\omega_{j_0}}, 
-h(\A_{\omega_j} \cap C_{t_k}, \cdot)] \in \mathcal{K}(C, \overline{\omega_{j_0}}) \ \ \ \text{and} \ \ \ 
\widetilde{\A}_0=[C, \overline{\omega_{j_0}}, -h(M_K , \cdot)] \in \mathcal{K}(C, \overline{\omega_{j_0}}). 
\end{equation*} 
It follows from \eqref{conv-supp-c-full}, and Lemmas \ref{L56} and \ref{weak-convergence-Sp} that $S_{n-1, p}(\widetilde{\A}_j, \cdot) \to S_{n-1, p}(\widetilde{\A}_0, \cdot)$ weakly on $\overline{\omega_{j_0}}$ for $0<p<1$. By 
\cite[Theorem 4.5.1]{Ash72}, for $0<p<1$, 
\begin{align}\label{wco}
S_{n-1, p}(\A, \omega) \notag
&=S_{n-1,p}(\widetilde{\A}_0, \omega)\leq \liminf_{j \rightarrow \infty} S_{n-1,p}(\widetilde{\A}_j, \omega)\\
&=\liminf_{j \rightarrow \infty}S_{n-1, p}(\A_{\omega_j}, \omega)=\liminf_{j \rightarrow \infty}\mu_j(\omega)=\mu({\omega}).
\end{align}
By taking a decreasing sequence of open sets converging to a closed set $\beta\subset \omega_{j_0}$, \eqref{wco} further implies \begin{equation}\label{wcoo}
{S}_{n-1, p}(\A, \beta) \leq \mu(\beta).
\end{equation} 
Similarly, \cite[Theorem 4.5.1]{Ash72} yields 
\begin{align*}
S_{n-1, p}(\A, \beta)
&=S_{n-1,p}(\widetilde{\A}_0, \beta)\geq \limsup_{j \rightarrow \infty} S_{n-1,p}(\widetilde{\A}_j, \beta)\\
&=\limsup_{j \rightarrow \infty}S_{n-1, p}(\A_{\omega_j}, \beta)=\limsup_{j \to \infty}\mu_j(\beta)=\mu({\beta}), 
\end{align*}
for any closed set $\beta \subset \omega_{j_0}$. Together with \eqref{wcoo}, one gets ${S}_{n-1, p}(\A, \beta)=\mu({\beta})$ for any closed set $\beta \subset \omega_{j_0}$. Consequently,   ${S}_{n-1, p}(\A, \beta)=\mu({\beta})$ for every closed set $\beta \subset \Omega_{C^\circ}$, and then, for $0< p<1$, ${S}_{n-1, p}(\A, \cdot)=\mu$ on $\Omega_{C^\circ}$. Hence, $\A$ is a $C$-close set solving Problem \ref{Lp-Minkowski-unbounded} under the condition that $\mu$ is a finite nonzero Borel measure on $\Oc.$

The uniqueness of the solution to Problem \ref{Lp-Minkowski-unbounded} indeed follows from \cite[Remark 5.1]{YYZ22} and here we make it explicit. Let $\A_1, \A_2$ be two $C$-close sets with   $A_1=C\,\backslash\A_1$ and $A_2=C\,\backslash\A_2$. Define 
\begin{equation} \label{Lp-mixed-volume}
\overline{V}_p(A_1, A_2)=\frac{1}{n} \int_{\Oc} \overline{h}_C(A_2, u)^p \overline{h}_C(A_1, u)^{1-p} d\overline{S}_{n-1}(A_1, u).
\end{equation} 
It follows from \eqref{Vn} and H\"older's inequality that $d\nu=\frac{\overline{h}(A_1, \cdot)}{nV_n(A_1)}d\overline{S}_{n-1}(A_1, \cdot)$ is a probability measure on $\Oc$, and  for $0<p<1$, 
\begin{equation}\label{Lp-mixed-V}
\frac{\overline{V}_p(A_1, A_2)}{V_n(A_1)}\leq \left( \frac{\overline{V}_1(A_1, A_2)}{V_n(A_1)}\right)^p \leq \left( \frac{V_n(A_2)}{V_n(A_1)}\right)^{\frac{p}{n}},
\end{equation}  with equality if and only if $A_1=\alpha A_2$ for some $\alpha>0$, where we have used the Minkowski inequality for $C$-coconvex sets established in \cite[(27)]{Sch18}. 
If $S_{n-1, p}(\A_1, \cdot)=S_{n-1, p}(\A_2, \cdot)$ for $0<p<1$, i.e., $\overline{S}_{n-1, p}(A_1, \cdot)=\overline{S}_{n-1, p}(A_2, \cdot)$, it follows from \eqref{volume-Lp} and  \eqref{Lp-mixed-volume} that $\overline{V}_p(A_1, A_2)=V_n(A_2)$ and $\overline{V}_p(A_2, A_1)=V_n(A_1)$. Combining with \eqref{Lp-mixed-V}, one gets   $V_n(A_2)\leq V_n(A_1)$ and $V_n(A_1) \leq V_n(A_2)$, which forces $A_1=A_2$ and therefore $\A_1=\A_2$. 
\end{proof} 
 
The proof of Theorem \ref{ex&uniC-close} gives the following result.
\begin{cor}\label{col-Aj-A} Let $\mu$ be a nonzero finite Borel measure defined on $\Oc$. Let $\mu_j$ and $\A_{\omega_j}$ be as defined in \eqref{definition-nu-j} and, respectively, \eqref{relation-A-omega-j}. Then,  $\A_{\omega_j}\rightarrow \A$ with $\mu=S_{n-1, p}(\A, \cdot).$  \end{cor}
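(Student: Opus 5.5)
The plan is to extract this from the proof of Theorem \ref{ex&uniC-close}, adding a subsequence argument so that the whole sequence converges, not just a subsequence. That proof shows: \emph{every} subsequence of $\{\A_{\omega_j}\}_{j\in\N}$ has a further subsequence converging to a $C$-close set $\A'$ with $S_{n-1,p}(\A',\cdot)=\mu$. Uniqueness in Theorem \ref{ex&uniC-close} then forces $\A'=\A$. A standard contradiction argument should finish the proof.

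First, Lemma \ref{b_0<p<1} applies to every $\A_{\omega_j}$, since $S_{n-1,p}(\A_{\omega_j},\overline{\omega}(\tau))=\mu(\overline{\omega}(\tau))=s_0>0$. This gives the uniform bounds \eqref{seq-1-1}, which hold for any subsequence. Take an arbitrary subsequence $\{\A_{\omega_{j_l}}\}_{l\in\N}$. By Theorem \ref{Blas-unb-1}, it has a further subsequence converging to a $C$-compatible set $\A'$. Note that $\{\omega_{j_l}\}$ is still an increasing exhaustion of $\Oc$ by open sets with $\overline{\omega_{j_l}}\subset\omega_{j_{l+1}}$. So the remaining steps of the existence proof apply verbatim along it:
\begin{itemize}
\item the volume bound $V_n(C\setminus\A_{\omega_j})\le \overline{\beta}_3^p\mu(\Oc)/n$ together with the lower semicontinuity \cite[Lemma 2]{Sch24};
\item the lower bound $b(\A')>\overline{\beta}_2/3$;
\item the localization to $\widetilde{\A}_j,\widetilde{\A}_0\in\mathscr{K}(C,\overline{\omega_{j_0}})$ combined with Lemma \ref{weak-convergence-Sp} and the portmanteau theorem.
\end{itemize}
These give that $\A'$ is $C$-close and $S_{n-1,p}(\A',\cdot)=\mu$. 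The cases $p=0$ and $p=1$ go the same way; for $p=1$ the volume bound is just $\mu(\Oc)/n$, and for $p=0$ it reads $\mu(\Oc)/n$ as well. Uniqueness then gives $\A'=\A$.

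Now suppose $\A_{\omega_j}\not\to\A$. Since the $b(\A_{\omega_j})$ are uniformly bounded, the nonemptiness requirement in Definition \ref{con-def} holds with any $t_0>\overline{\beta}_3$. Failure of convergence therefore means there are some $t>t_0$, some $\varepsilon>0$ and a subsequence along which
\[
d_H\bigl(\A_{\omega_{j_l}}\cap C_t,\ \A\cap C_t\bigr)\ge\varepsilon .
\]
Extract from it a further subsequence converging to some $\A'$. By the above, $\A'=\A$, which gives a contradiction.

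The main obstacle is this last step. Convergence in Definition \ref{con-def} is checked for all $t>t_0$, whereas the Blaschke-type construction only gives Hausdorff convergence on the slices $C_{t_k}$. To show the limit yields convergence on every $C_t$ with $t>t_0$, I would use the fact that the limit in Theorem \ref{Blas-unb-1} is a genuine convergence in the sense of Definition \ref{con-def} \cite[Lemma 1]{Sch24}. Alternatively, I would choose the sequence $t_k$ to include the offending $t$. Either way, along the further subsequence $\A_{\omega_{j_l}}\cap C_t\to\A\cap C_t$, which contradicts the $\varepsilon$-separation.
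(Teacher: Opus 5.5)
Your proposal is correct and follows the paper's own argument: the uniform bounds \eqref{seq-1-1} from Lemma \ref{b_0<p<1}, Blaschke selection along an arbitrary subsequence, rerunning the existence proof to get $S_{n-1,p}(\A',\cdot)=\mu$, and uniqueness to force $\A'=\A$. Your extra care with the subsequence principle for Definition \ref{con-def}, placing the offending level $t$ among the slice levels $t_k$, is a valid refinement that the paper leaves implicit; the one slip is harmless, since for $p=1$ the volume bound is $\overline{\beta}_3\mu(\Oc)/n$ rather than $\mu(\Oc)/n$, which is still finite.
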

\begin{proof} Let $\{\A_{\omega_{j_k}}\}_{k\in \N}$ be any subsequence of $\{\A_{\omega_j}\}_{j\in \N}$. It follows from  \eqref{seq-1-1} that, for all $k\in \N$,   \begin{equation*} 
\overline{\beta}_2 \leq b(\A_{\omega_{j_k}}) \leq \overline{\beta}_3. 
\end{equation*} By the Blaschke selection theorem for $C$-compatible sets (i.e., Theorem \ref{Blas-unb-1}), one can find a subsequence of $\{\A_{\omega_{j_k}}\}_{k\in \N}$, say $\{\A_{\omega_{j_{k_l}}}\}_{l\in \N}$, such that, $\A_{\omega_{j_{k_l}}} \rightarrow \A_0.$ The proof of Theorem \ref{ex&uniC-close} yields $\mu=S_{n-1, p}(\A_0, \cdot)$ on $\Oc$. Note that $\mu=S_{n-1, p}(\A,\cdot)$ as well, and hence $\A=\A_0$ following from  the uniqueness in Theorem \ref{ex&uniC-close}. 

In summary, we have proved that any subsequence of $\{\A_{\omega_j}\}_{j\in \N_0}$ admits a convergent subsequence converging to $\A$. This proves that $\A_{\omega_j}\rightarrow \A$ as desired.  
\end{proof}
  
Bearing in mind that solutions to Problem \ref{Lp-Minkowski-unbounded} are unique, it is natural to study the continuity of the solutions. We are interested in the following problem.   
\begin{problem}[\bf The continuity of the solutions to $L_p$ Minkowski problem for $C$-close sets]\label{con?} Let $\nu_i$ for $i\in \mathbb{N}_0$ be nonzero finite Borel measures on $\Oc$ and $0\leq p\leq 1$. For $i\in \mathbb{N}_0,$ let  $\B_i$ be the unique $C$-close set such that $ \nu_i=S_{n-1, p}(\B_i, \cdot)$.  Under what conditions on $\{\nu_i\}_{i\in \N_0}$ does $\B_i \rightarrow \B_0$?
\end{problem}

\section{Continuity if \texorpdfstring{$\nu_i$}{}  concentrate on a common compact set } \label{sec4.1}\setcounter{equation}{0}
In this section, we will discuss the case when all measures  $\nu_i$, for $i\in \N_0$, concentrate on a common compact set $\omega\subset\Oc$. 
For $0\leq p\leq 1$, let $\nu_i$ and $\B_i$ be as in Problem \ref{con?}.  By Theorem \ref{LpCfull}, each $\B_i\in \mathcal{K}(C, \omega)$ for $i\in \N_0.$ Moreover,  
$\nu_i=S_{n-1, p}(\B_i, \cdot)$ where $\B_i=c_i^{\frac{1}{n-p}}\B_{i}^0$
with  
\begin{align}\label{Ai-compact-support}
c_i&= \frac{1}{n}\int_{\omega} (-{h}_C(\B_{i}^0, u))^p d\nu_i(u), \\ 
\B_{i}^0 & \in \mathscr{L}=\left\{ Q \in \mathscr{K}(C, \omega):\,\, V_n(C\, \backslash\, Q)=1 \right\}.  \notag
\end{align} 
Note that for each $Q\in \mathcal{L}$, $(-{h}_C(Q, \cdot))^p>0$ on $\omega$ following from the facts that $h_C(Q, \cdot)$ is continuous and negative on $\omega$. If $\nu$ is a nonzero finite Borel measure concentrated on $\omega$, one sees that $$\int_{\omega} (-{h}_C(Q, u))^p d\nu(u)>0.$$ So each $c_i$ in  \eqref{Ai-compact-support} must be strictly positive.   

We will prove the following result. 
\begin{theorem}\label{m1}
Let $0\leq p\leq 1$ and $\omega\subset\Oc$ be a compact subset. 
Let $\nu_i$ for $i\in \mathbb{N}_0$ be nonzero finite Borel measures on $\Oc$ whose supports are contained in $\omega$. Let $\B_i\in \K$ be
such that $\nu_i=S_{n-1, p}(\B_i, \cdot)$ for $i\in \mathbb{N}_0.$  If $\nu_i \rightarrow \nu_0$ weakly on $\omega$, then $\B_i \rightarrow \B_0$ as $i\rightarrow \infty$.  
\end{theorem}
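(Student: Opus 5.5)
We argue by subsequences combined with uniqueness. It suffices to show that every subsequence of $\{\B_i\}$ has a further subsequence converging to $\B_0$. Fix a subsequence, still denoted $\{\B_i\}$. Weak convergence on $\omega$ with $f\equiv 1$ gives $\nu_i(\omega)\to\nu_0(\omega)>0$. Hence for large $i$ the total masses $\nu_i(\omega)$ lie in a fixed interval $[m_0,M_0]\subset(0,\infty)$.

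\textbf{Step 1: uniform bounds on $b(\B_i)$.} For the upper bound, the computation leading to \eqref{unif-upp-01} in Lemma \ref{b_0<p<1} uses only $\mu(\Oc)$, so it gives $b(\B_i)\le (\int_{\On}dv/m_0)^{1/(p-n)}$. For the lower bound, choose $\tau>0$ small enough that $\omega\subset\overline{\omega}(\tau)$; this is possible since $\omega$ is compact in $\Oc$. Then $S_{n-1,p}(\B_i,\overline{\omega}(\tau))=\nu_i(\omega)\ge m_0$. Now run the lower-bound argument of Lemma \ref{b_0<p<1}, with $s_0$ replaced by $m_0$, as in \eqref{formula-s0-1}. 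This yields $b(\B_i)\ge (m_0/S(C_{t'}))^{1/(n-p)}$ with $t'$ depending only on $C,\tau$. So \eqref{two-bounds-1} holds uniformly in $i$.

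\textbf{Step 2: compactness and staying in $\K$.} By Theorem \ref{Blas-unb-1}, a further subsequence satisfies $\B_i\to\B'$ for some $C$-compatible set $\B'$. I must show $\B'$ is $C$-full, so that Lemma \ref{L56} ii) gives $\B'\in\K$. To do this, use \eqref{volume-Lp} and \eqref{hb}:
$$V_n(C\setminus\B_i)=\tfrac1n\int_\omega(-h_C(\B_i,u))^p\,d\nu_i\le \tfrac1n\beta_3^pM_0,$$
which is uniformly bounded. Then rescale: $\B_i^0=V_n(C\setminus\B_i)^{-1/n}\B_i\in\mathscr L$, and the scaling factors are bounded above and below. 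The upper bound on the factors follows from $V_n(C\setminus\B_i)\ge c\,b(\B_i)^n$, via the radial formula in the proof of Lemma \ref{b_0<p<1}. By Lemma \ref{L1}, $C\cap H_{\widetilde t_0}\subset\B_i^0$. Hence $C\setminus\B_i\subset C_{t^*}$ for a fixed $t^*$. This containment passes to the limit, so $C\setminus\B'$ is bounded and $\B'$ is $C$-full. Since $b(\B')>0$ as in the proof of Theorem \ref{ex&uniC-close}, we conclude $\B'\in\K$.

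\textbf{Step 3: identifying the limit.} By Lemma \ref{weak-convergence-Sp}, $S_{n-1,p}(\B_i,\cdot)\to S_{n-1,p}(\B',\cdot)$ weakly on $\omega$. Since also $S_{n-1,p}(\B_i,\cdot)=\nu_i\to\nu_0$ weakly on $\omega$, uniqueness of weak limits gives $S_{n-1,p}(\B',\cdot)=\nu_0$ on $\omega$. All the measures involved vanish off $\omega$ (by \cite[(29)]{Sch18} for $\B'\in\K$), so equality holds on $\Oc$. The uniqueness part of Theorem \ref{LpCfull}, or of Theorem \ref{ex&uniC-close}, gives $\B'=\B_0$. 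Hence every subsequence has a further subsequence converging to $\B_0$, and therefore $\B_i\to\B_0$.

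The main obstacle is Step 2: ensuring the limit does not leave $\K$. Without it, $\B'$ could fail to be $C$-full and Lemma \ref{weak-convergence-Sp} would not apply. I handle this with the uniform containment $C\setminus \B_i\subset C_{t^*}$ obtained from Lemma \ref{L1} after normalizing the volume.
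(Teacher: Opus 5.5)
Your argument is correct, but it is organized differently from the paper's. The paper never bounds $b(\B_i)$ directly. It writes $\B_i=c_i^{1/(n-p)}\B_i^0$ with $\B_i^0\in\mathscr L$, and extracts a convergent subsequence of the normalized sets by Lemma \ref{L2.2}; the limit stays in $\mathscr L$ thanks to Lemmas \ref{bound_Q}, \ref{L1} and \ref{L56}. It then shows $c_i\to\widetilde{c}_0>0$ by passing to the limit in $\frac1n\int_\omega(-h_C(\B_i^0,u))^p\,d\nu_i(u)$, so that $\B_i\to\widetilde{c}_0^{1/(n-p)}\widetilde{\B}_0^0$ along the subsequence.

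You instead obtain two-sided bounds on $b(\B_i)$ from the equation via Lemma \ref{b_0<p<1}, as the paper later does in Lemma \ref{widet0}. You then apply Theorem \ref{Blas-unb-1} to $\{\B_i\}$ itself. The volume normalization and Lemma \ref{L1} serve only to give the uniform containment $C\setminus\B_i\subset C_{t^*}$, which keeps the limit $C$-full and hence, by Lemma \ref{L56}, in $\K$.

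Your route buys two things. It avoids tracking the constants $c_i$ and the joint limit of integrals against the varying measures $\nu_i$. It also makes the compact-support case an instance of the scheme used for Theorem \ref{con-mu}. The paper's route buys two things in return. Its compactness step uses constants depending only on $C$ and $\omega$. It also does not need the inverse Gauss image estimate from \cite[Lemma 9]{Sch24} that underlies the lower bound in Lemma \ref{b_0<p<1}.

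One slip to fix: in Step 1 the upper bound must use $M_0$, not $m_0$. Indeed \eqref{unif-upp-01} gives $b(\B_i)^{n-p}\int_{\On}dv\le\nu_i(\omega)\le M_0$, i.e.\ $b(\B_i)\le (M_0/\int_{\On}dv)^{1/(n-p)}$.
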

 
To prove Theorem \ref{m1}, we need the following lemma, whose proof is inspired by Lemma \ref{L1}.  
\begin{lemma} \label{bound_Q} 
Let $\omega\subset \Oc$ be a given compact set. Then, there exist positive constants $\beta_4$ and $\beta_5$, depending on $C$ and $\omega$ only, such that 
\begin{equation*}
\beta_4 \leq b(Q) \leq \beta_5\ \  \mathrm{for\ each} \ \ Q\in \mathscr{L}.
\end{equation*} 
\end{lemma}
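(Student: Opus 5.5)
The plan is to obtain the two bounds separately: the upper bound from Lemma \ref{L1}, and the lower bound from the volume normalization $V_n(C\,\backslash Q)=1$ together with the radial function bound in \eqref{hb}.

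\textbf{Upper bound.} By Lemma \ref{L1}, there is $\widetilde{t}_0>0$, depending only on $C$ and $\omega$, such that $C\cap H_{\widetilde{t}_0}\subset Q$ for every $Q\in\mathscr{L}$. Here $H_{\widetilde{t}_0}=H(\xi,\widetilde{t}_0)$. Fix any point $z_0\in C\cap H(\xi,\widetilde{t}_0)$, for instance $z_0=\widetilde{t}_0\,\xi/\langle \xi,\xi\rangle=\widetilde{t}_0\xi$; this lies in $C$ provided $\xi\in \mathrm{int}\, C$, as arranged in Section \ref{sec-3} where $\xi\in\On$. Then $z_0\in Q$, so $b(Q)\leq |z_0|=\widetilde{t}_0$. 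Alternatively, any point of the compact set $C\cap H(\xi,\widetilde{t}_0)$ works, and we may take $\beta_5=\max\{|x|: x\in C\cap H(\xi,\widetilde{t}_0)\}$, which is finite because $C_{\widetilde{t}_0}$ is bounded. This constant depends only on $C$ and $\omega$.

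\textbf{Lower bound.} Using $nV_n(C\,\backslash Q)=\int_{\On}\rho_C(Q,v)^n\,dv$ (as in the proof of Lemma \ref{b_0<p<1}) and $\rho_C(Q,\cdot)\geq b(Q)$ from \eqref{hb}, we get
\[
n = nV_n(C\,\backslash Q)\geq b(Q)^n\,\mathscr{H}^{n-1}(\On).
\]
This gives the upper bound again, $b(Q)\leq (n/\mathscr{H}^{n-1}(\On))^{1/n}$, which is even simpler and depends on $C$ only; but it does not give a lower bound. For the lower bound I will use the fact that $Q$ is determined by $\omega$: for every $u\in\omega$ we have $h_C(Q,u)\leq -b(Q)$... in fact the useful inequality goes the other way. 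Since $C\,\backslash Q\subset C\cap\bigcup_{u\in\omega}\{x:\langle x,u\rangle> h_C(Q,u)\}$, we need to control $-h_C(Q,u)$ from above by a multiple of $b(Q)$. Let $y\in Q$ with $|y|=b(Q)$. Then for $u\in\omega$, $h_C(Q,u)\geq\langle y,u\rangle\geq -b(Q)$. Hence
\[
C\,\backslash Q\subset C\cap\bigcup_{u\in\omega}\{x:\langle x,u\rangle>-b(Q)\}\subset C_{c\,b(Q)},
\]
where $c>0$ depends only on $C$ and $\omega$. To justify the last inclusion, compactness of $\omega\subset\Oc$ gives $\langle x,u\rangle\leq -\kappa|x|$ for all $x\in C$ and $u\in\omega$, with some $\kappa>0$. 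Thus $\langle x,u\rangle>-b(Q)$ forces $|x|<b(Q)/\kappa$, and so $\langle x,\xi\rangle<b(Q)/\kappa$. Therefore $1=V_n(C\,\backslash Q)\leq V_n(C_{b(Q)/\kappa})=(b(Q)/\kappa)^nV_n(C_1)$, which yields $\beta_4=\kappa\,V_n(C_1)^{-1/n}$.

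The main point requiring care is the existence of the uniform $\kappa$. It follows because $(x,u)\mapsto\langle x,u\rangle$ is continuous and strictly negative on the compact set $(C\cap\sphere)\times\omega$, since $u\in\mathrm{int}\,C^\circ$.
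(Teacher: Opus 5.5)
Your proof is correct, but it uses the ingredients differently from the paper.

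\textbf{How the paper argues.} The paper uses Lemma \ref{L1} for the \emph{lower} bound. From $C\cap H_{\widetilde t_0}\subset Q$ it deduces $\pmb\nu_Q^{-1}(\omega)\subset C_{\widetilde t_0}$, hence $S_{n-1}(Q,\omega)\le S(C_{\widetilde t_0})$. Then \eqref{Vn} and \eqref{hb} give $1=\frac1n\int_\omega(-h_C(Q,u))\,dS_{n-1}(Q,u)\le \frac1n\, b(Q)\,S(C_{\widetilde t_0})$, so $\beta_4=n/S(C_{\widetilde t_0})$. Its upper bound is exactly the radial-function estimate you record as an aside, giving $\beta_5=(n/\int_{\On}dv)^{1/n}$, which depends on $C$ only.

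\textbf{How your route differs.} You use Lemma \ref{L1} for the upper bound: $\widetilde t_0\xi\in C\cap H(\xi,\widetilde t_0)\subset Q$, so $b(Q)\le\widetilde t_0$. This depends on $\omega$, which the statement permits. You then obtain the lower bound without Lemma \ref{L1}, without monotonicity of surface area, and without the volume formula \eqref{Vn}. The inequality $h_C(Q,u)\ge -b(Q)$ on $\omega$, together with the uniform constant $\kappa$ (essentially the sine of the angular distance from $\omega$ to $\partial\Oc$), confines $C\setminus Q$ to $C\cap (b(Q)/\kappa)B_n\subset C_{b(Q)/\kappa}$. The normalization $V_n(C\setminus Q)=1$ then yields $\beta_4=\kappa V_n(C_1)^{-1/n}$.

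\textbf{What each route buys.} Your lower bound is more elementary and self-contained, and it gives an explicit constant. It also proves a stronger geometric fact: $C\setminus Q\subset (b(Q)/\kappa)B_n$ for every $Q\in\K$, whether or not it is normalized. The paper's route has the advantage of running parallel to the surface-area argument it already used in Section \ref{sec-3}, so both bound lemmas follow one template.

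\textbf{Cleanup for a write-up.} Delete the abandoned remark ``$h_C(Q,u)\le -b(Q)$''. Combined with \eqref{hb}, it would force $-h_C(Q,\cdot)\equiv b(Q)$ on $\omega$, which fails in general. Only the reverse inequality $h_C(Q,u)\ge -b(Q)$, which you then prove, is needed.
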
 
\begin{proof} 
We consider the lower bound first, which can be obtained similar to the proof of Lemma \ref{b_0<p<1}. Let $Q \in \mathscr{L}$.  Lemma \ref{L1} implies the existence of constant $\widetilde{t}_0>0$ depending on $C$ and $\omega$ only, such that $C\, \cap\,H_{\widetilde{t}_0}\subset Q $. This implies $\pmb\nu_{Q}^{-1}(\omega) \subset C_{\widetilde{t}_0}$ and then $S_{n-1}(Q, \omega) \leq S(C_{\widetilde{t}_0})$, where $S(C_{\widetilde{t}_0})$ is the surface area of the convex body $C_{\widetilde{t}_0}$. By \eqref{hb}, \eqref{Vn} and the fact that $S_{n-1}(Q, \Oc\backslash \omega)=0$ (see \cite[(29)]{Sch18}), one has, 
\begin{equation*}
1=V_n(C\,\backslash Q)=\frac{1}{n}\int_{\omega} (-{h}_C(Q, u))dS_{n-1}(Q, u)\leq \frac{1}{n}\, b(Q) \cdot S(C_{\widetilde{t}_0}).
\end{equation*} This further gives   
\begin{equation*}
b(Q)\geq   \frac{n}{S(C_{\widetilde{t}_0})}=\beta_4,  
\end{equation*} and $\beta_4$ depends on $C$ and $\omega$ only.

Let us prove the upper bound.  
By \cite[page 14]{LYZ23} and \eqref{hb},  for $Q\in \mathcal{L}$, one has 
\begin{equation*}
1=V_n(C\setminus Q)=\frac{1}{n} \int_{\On} \rho_C(Q, v)^n dv \geq \frac{b(Q)^n}{n} \int_{\On} dv.
\end{equation*} 
 After a rearrangement, one gets  \begin{equation*}
b(Q)\leq \bigg( \frac{n}{\int_{\On} dv}\bigg)^\frac{1}{n}=\beta_5,
\end{equation*}
where $\beta_5$ depends on $C$. This completes  the proof.
\end{proof}

With the help of the Blaschke selection theorem for $C$-compatible sets (i.e., Theorem \ref{Blas-unb-1}),   we can obtain the following result.
\begin{lemma}\label{L2.2} 
Let $\omega\subset\Oc$ be a given compact set. Every sequence $\{Q_i\}_{i \in \mathbb{N}}$   with $Q_i \in \mathscr{L}$ for $i\in \N$ must admit a convergent subsequence whose limit is also in $\mathcal{L}$.  
\end{lemma}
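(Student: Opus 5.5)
By Lemma \ref{bound_Q}, every $Q_i\in\mathscr{L}$ satisfies $\beta_4\le b(Q_i)\le\beta_5$, where the constants depend only on $C$ and $\omega$. So \eqref{two-bounds-1} holds, and Theorem \ref{Blas-unb-1} yields a subsequence, still denoted $\{Q_i\}$, with $Q_i\to Q_0$ for some $C$-compatible set $Q_0$. Moreover $b(Q_0)>0$; this follows from the lower bound $\beta_4$ exactly as in the proof of Theorem \ref{ex&uniC-close}. It remains to show that $Q_0\in\mathscr{K}(C,\omega)$ and $V_n(C\setminus Q_0)=1$.

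\textbf{$Q_0$ is $C$-full.} Lemma \ref{L1} gives $\widetilde t_0>0$, depending only on $C$ and $\omega$, with $C\cap H_{\widetilde t_0}\subset Q_i$ for all $i$. Hence $C\setminus Q_i\subset C_{\widetilde t_0}$, and for every $t>\widetilde t_0$,
\[
Q_i\cap C_t=(C\cap H^-_t)\setminus (C\setminus Q_i).
\]
This gives $C\cap\{x:\langle x,\xi\rangle\ge \widetilde t_0\}\subset Q_i$. The inclusion is closed under Hausdorff limits of $Q_i\cap C_t$, so it passes to $Q_0$. Therefore $C\setminus Q_0\subset C_{\widetilde t_0}$ is bounded, and $Q_0$ is $C$-full. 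Lemma \ref{L56}\,ii) now gives $Q_0\in\mathscr{K}(C,\omega)$.

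\textbf{Volume normalization.} Fix $t>\widetilde t_0$. Then
\[
V_n(C\setminus Q_i)=V_n(C_t)-V_n(Q_i\cap C_t),
\]
and the same identity holds for $Q_0$. By Definition \ref{con-def}, $Q_i\cap C_t\to Q_0\cap C_t$ in the Hausdorff metric, and volume is continuous on compact convex sets. Hence
\[
V_n(C\setminus Q_0)=\lim_{i\to\infty}V_n(C\setminus Q_i)=1 .
\]
This uses only that the complements lie in the fixed bounded region $C_{\widetilde t_0}$. Alternatively, one could use \eqref{Vn} together with \eqref{unif} and Lemma \ref{L2} with $f_i=-h_C(Q_i,\cdot)$ on $\omega$.

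\textbf{Main obstacle.} The delicate point is ensuring that the limit is a genuine $C$-full set, not merely $C$-compatible, so that Lemma \ref{L56}\,ii) applies and no volume escapes to infinity. The uniform containment from Lemma \ref{L1} handles this. Passing the inclusion $C\cap H_{\widetilde t_0}\subset Q_i$ to the limit needs a short closedness argument: every point of the limit region is the limit of the same point, which lies in every $Q_i\cap C_t$.
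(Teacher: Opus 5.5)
Your proof is correct. Its skeleton matches the paper's: Lemma \ref{bound_Q}, then Theorem \ref{Blas-unb-1}, then passing the inclusion from Lemma \ref{L1} to the limit, then Lemma \ref{L56}\,ii).

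The one genuine difference is the volume normalization.
\begin{itemize}
\item The paper writes $V_n(C\setminus Q_{i_j})=\frac1n\int_\omega(-h_C(Q_{i_j},u))\,dS_{n-1}(Q_{i_j},u)$ via \eqref{Vn}. It then passes to the limit using \eqref{unif} and the weak convergence of surface area measures in Lemma \ref{L2}. This is why it must first establish $Q_0\in\K$: both \eqref{unif} and Lemma \ref{L2} require the limit to lie in $\K$, and so does the vanishing of $S_{n-1}(Q_0,\Oc\setminus\omega)$.
\item You instead use $V_n(C\setminus Q_i)=V_n(C_t)-V_n(Q_i\cap C_t)$ together with continuity of volume under Hausdorff convergence. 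This needs only the uniform containment $C\setminus Q_i\subset C_{\widetilde t_0}$.
\end{itemize}
Your route is more elementary and self-contained. It does not depend on the weak-convergence result of Lemma \ref{L2}, and it shows the volume constraint survives the limit independently of whether $Q_0\in\K$. The paper's route instead reuses machinery it already has in place.

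Three small points:
\begin{itemize}
\item If $H_{\widetilde t_0}$ denotes the hyperplane $H(\xi,\widetilde t_0)$, your ``Hence $C\setminus Q_i\subset C_{\widetilde t_0}$'' needs the one-line observation $Q_i+C\subset Q_i$. This holds for any $C$-compatible set because $\Oc\subset C^\circ$, and it lets you write any $x\in C$ with $\langle x,\xi\rangle\ge\widetilde t_0$ as a point of $C\cap H_{\widetilde t_0}$ plus an element of $C$.
\item The fixed level $t$ should also exceed the threshold in Definition \ref{con-def}.
\item The separate argument for $b(Q_0)>0$ is unnecessary, since Theorem \ref{Blas-unb-1} already produces a $C$-compatible limit.
\end{itemize}
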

\begin{proof} 
Let $\{Q_i\}_{i \in \mathbb{N}}$ be a sequence  with $Q_i \in \mathscr{L}$ for $i\in \N$. By Lemma \ref{bound_Q}, one gets  $\beta_4 \leq b(Q_i) \leq \beta_5$  for each $i\in \N$. It follows from   Theorem \ref{Blas-unb-1}  that the sequence $\{Q_i\}_{i \in \mathbb{N}}$ has a subsequence, say $\{Q_{i_j}\}_{j \in \mathbb{N}}$, such that $Q_{i_j}\rightarrow Q_0$, where $Q_0$ is a $C$-compatible set. 

We now claim that $C\cap H_{\widetilde{t}_0}\subset Q_0$, where $\widetilde{t}_0$ is the constant given in Lemma \ref{L1}. In fact, it follows from Lemma \ref{L1}  that $C \cap H_{\widetilde{t}_0} \subset Q_i$ for all $i \in \mathbb{N}$. In particular, $C \cap H_{\widetilde{t}_0} \subset Q_{i_j}$ for each $j \in \mathbb{N}$. Taking the limit as $j\rightarrow \infty,$ one gets $C \cap H_{\widetilde{t}_0} \subset Q_{0}.$  This yields that $Q_0$ is a $C$-full set. Applying Lemma \ref{L56}, one sees that $Q_0\in \K$. In summary, $Q_{i_j}\rightarrow Q_0 \in \K$ as $j\to \infty$.  
Note that $S_{n-1}(Q, \Oc\backslash \omega)=0$ for $Q \in \K$.
It follows from \eqref{unif}, \eqref{Vn} and  Lemma \ref{L2} that 
\begin{align*}
\lim_{j\rightarrow \infty} V_n(C\setminus Q_{i_j}) 
&=\lim_{j\rightarrow \infty} \frac{1}{n} \int_{\omega} (-h_C(Q_{i_j}, u))dS_{n-1}(Q_{i_j}, u)\\
&=\frac{1}{n} \int_{\omega} (-h_C(Q_{0}, u))dS_{n-1}(Q_{0}, u)\\
&=V_n(C\setminus Q_0). 
\end{align*}  
That is, $V_n(C\setminus Q_0)=1$ and $Q_0 \in \mathcal{L}$.   
\end{proof}

We are now in the position to prove Theorem \ref{m1}. 

\begin{proof}[Proof of Theorem \ref{m1}.]  
Let $0\leq p\leq 1$ and $\omega\subset\Oc$ be a compact subset. 
Let $\nu_i$ for $i\in \mathbb{N}_0$ be nonzero finite Borel measures on $\Oc$ whose supports are contained in $\omega$ and such that $\nu_i \to \nu_0$ weakly on $\omega$. Let $\B_i\in \K$ be such that $\nu_i=S_{n-1, p}(\B_i, \cdot)$.

We want to show $\B_i \rightarrow \B_0$ as $i \rightarrow \infty$. To this end,  it suffices to prove that every subsequence of $\{\B_i\}_{i \in \mathbb{N}}$ has a convergent subsequence whose limit must be $\B_0$. Recall that $$\mathscr{L}=\left\{ Q \in \mathscr{K}(C, \omega):\,\, V_n(C\, \backslash\, Q)=1 \right\},$$ and $\B_i=c^\frac{1}{n-p}_i \B^0_i$ with $c_i$ the constant given in  \eqref{Ai-compact-support} and $\B_i^0\in \mathcal{L}.$

It follows from Lemma \ref{L2.2}  that any subsequence of $\{\B_i^0 \}_{i\in \N}$, say $\{\B_{i_j}^0 \}_{j\in \N}$, admits a convergent subsequence $\{\B_{i_{j_k}}^0 \}_{k\in \N}$ such that $\B^0_{i_{j_k}}\rightarrow \widetilde{\B}^0_0 \in \mathcal{L}.$ This further implies $h_C(\B^0_{i_{j_k}}, \cdot) \rightarrow h_C(\widetilde{\B}^0_0, \cdot)$ uniformly on compact set $\omega$ by \eqref{unif}.
Let 
\begin{equation*}
\widetilde{\B}_0=\left[ \frac{1}{n}\int_{\omega} (-h_C(\widetilde{\B}^0_0, u))^p d\nu_0(u)\right]^{\frac{1}{n-p}}\widetilde{\B}^0_0=\widetilde{c_0}^{\frac{1}{n-p}}\widetilde{\B}^0_0.
\end{equation*}

We now claim that $\widetilde{\B}_0=\B_0.$ First of all,  as $\nu_i\rightarrow \nu_0$ weakly on $\omega$ and $h_C(\B^0_{i_{j_k}}, \cdot) \rightarrow h_C(\widetilde{\B}^0_0, \cdot)$ uniformly on compact set $\omega$, Lemma \ref{L2} implies that, for $0\leq p\leq 1$,  
\begin{align*} 
\lim_{k\rightarrow \infty} c_{i_{j_k}} =\lim_{k\rightarrow \infty} \frac{1}{n}\int_{\omega} (-{h}_C(\B^0_{i_{j_k}}, u))^p d\nu_{i_{j_k}}(u) = \frac{1}{n}\int_{\omega} (-h_C(\widetilde{\B}^0_0, u))^p d\nu_0(u)=\widetilde{c_0}>0.  
\end{align*}  In particular, one sees that $\B_{i_{j_k}} \rightarrow \widetilde{\B}_0$ as $k\rightarrow \infty$. Note that $\B_{i_{j_k}}$ for each $k\in \N$ and $\widetilde{\B}_0$ are in $\K.$  It follows from Lemma \ref{weak-convergence-Sp} that, for $0\leq p\leq 1,$ ${S}_{n-1, p}(\B_{i_{j_k}}, \cdot) \rightarrow {S}_{n-1, p}(\widetilde{\B}_0, \cdot)$ weakly on $\omega$ as $k\rightarrow \infty$. Thus, $$ 
\nu_{i_{j_k}}= {S}_{n-1, p}(\B_{i_{j_k}}, \cdot) \rightarrow {S}_{n-1, p}(\widetilde{\B}_0, \cdot). $$ Then $\nu_0={S}_{n-1, p}(\widetilde{\B}_0, \cdot)$ due to the fact that $\nu_{i_{j_k}}\rightarrow \nu_0$ weakly on $\omega.$ Note that $\nu_0={S}_{n-1, p}(\B_0, \cdot)$ and the uniqueness in Theorem \ref{LpCfull} further yields that $\B_0=\widetilde{\B}_0$.  

In summary, we prove that every subsequence of $\{\B_i\}_{i \in \mathbb{N}}$ has a convergent subsequence whose limit must be $\B_0$. Thus, $\B_i\rightarrow \B_0$ as $i\rightarrow \infty.$ This completes the proof.  
\end{proof}

\section{The continuity when \texorpdfstring{$\mu$}{} are finite Borel measures on \texorpdfstring{$\Oc$}{}}  \label{sec-4.12}
\setcounter{equation}{0}
In this section, we assume that $\nu_i$, $i\in \N_0$, are nonzero finite Borel measures on $\Oc$ (not necessarily concentrated on compact sets). It follows from \cite[Theorem 7.8]{Folland-1} that each $\nu_i$ is a regular Radon measure. The measures $\nu_i$, $i\in \mathbb{N}$, are said to converge to $\nu_0$ vaguely on $\Oc$ if 
\begin{equation}\label{vague-convergence}
\int_{\Oc} f(u)d\nu_i(u) \to \int_{\Oc} f(u)d\nu_0(u)\ \ \text{for all}\ \  f\in \mathbf{C}_c(\Oc)
\end{equation}
as $i \to \infty$,
where $\mathbf{C}_c(\Oc)$ is the collection of continuous functions $f: \Oc \to \mathbb{R}$ such that the support of $f$ is a compact subset of $\Oc$. It follows from the Riesz representation theorem (i.e., \cite[Theorem 7.2]{Folland-1}) that the limit under the vague convergence is unique. 
 
The following lemma establishes the vague convergence of the $L_p$ surface area measures of $C$-compatible sets. Recall that $\Ln$ denotes the family of  $C$-compatible sets.  
\begin{lemma} \label{vague-con-Sp}
Let $\{\A_i\}_{i \in \N_0} \subset \Ln$ and let $p\in \R.$ If $\A_i \to \A_0$, then $$S_{n-1, p}(\A_i, \cdot)\to S_{n-1, p}(\A_0, \cdot) \ \ \mathrm{vaguely\  on} \ \ \Oc.$$   
\end{lemma}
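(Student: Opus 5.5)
Fix $f\in \mathbf{C}_c(\Oc)$ and let $\omega_0=\operatorname{supp} f$, a compact subset of $\Oc$. The plan is to reduce the statement to Lemma \ref{weak-convergence-Sp} by replacing each $\A_i$ with a $C$-determined set that has the same local boundary structure over $\omega_0$. First choose a compact set $\omega\subset\Oc$ with $\omega_0\subset \mathrm{int}\,\omega$ (relative to $\sphere$), for instance $\omega=\overline{\omega}(\tau/2)$ where $\omega_0\subset \overline{\omega}(\tau)$. Since $\A_i\to\A_0$, Definition \ref{con-def} gives a $t_0$ such that $\A_i\cap C_t\to\A_0\cap C_t$ in the Hausdorff metric for all $t>t_0$. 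By the argument behind \cite[Lemma 9]{Sch24} (as used in Lemma \ref{b_0<p<1}), once the distances $b(\A_i)$ are uniformly bounded, the inverse Gauss images $\pmb\nu^{-1}_{\A_i}(\omega)$ all lie in a fixed $C_{t'}$. So first I will verify that $b(\A_i)$ is bounded above and below. The upper bound follows from $\A_i\cap C_{t_0}\neq\emptyset$. The lower bound comes from Hausdorff convergence on $C_t$ together with $o\notin\A_0$.

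Next, fix $t>\max\{t_0,t'\}$ and set $\widetilde{\A}_i=[C,\omega,-h(\A_i\cap C_t,\cdot)]$ for $i\in\N_0$, in the same way the proof of Theorem \ref{ex&uniC-close} builds $\widetilde{\A}_j$. For $u\in\omega$ the supporting hyperplane of $\A_i$ with normal $u$ touches $\A_i$ inside $C_{t'}\subset C_t$. Hence $h(\A_i\cap C_t,u)=h_C(\A_i,u)<0$ on $\omega$, and $\widetilde{\A}_i\in\K$. Moreover $\widetilde{\A}_i$ and $\A_i$ have the same boundary points with normals in $\mathrm{int}\,\omega$. This yields
\[
S_{n-1,p}(\widetilde{\A}_i,\eta)=S_{n-1,p}(\A_i,\eta)\quad\text{for Borel }\eta\subset \omega_0.
\]
Hausdorff convergence gives $h(\A_i\cap C_t,\cdot)\to h(\A_0\cap C_t,\cdot)$ uniformly on $\omega$. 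Lemma \ref{L56} i) then gives $\widetilde{\A}_i\to\widetilde{\A}_0$. Lemma \ref{weak-convergence-Sp} now gives $S_{n-1,p}(\widetilde{\A}_i,\cdot)\to S_{n-1,p}(\widetilde{\A}_0,\cdot)$ weakly on $\omega$. Testing with $f$, which is continuous on $\omega$ and vanishes off $\omega_0$, we get $\int f\,dS_{n-1,p}(\A_i,\cdot)\to\int f\,dS_{n-1,p}(\A_0,\cdot)$.

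I expect the main obstacle to be the localization step: showing that $\widetilde{\A}_i$ and $\A_i$ really have the same $L_p$ surface area measure on $\omega_0$, uniformly in $i$. This requires the uniform containment $\pmb\nu^{-1}_{\A_i}(\omega)\subset C_{t'}$. It also requires checking that the extra facets of $\widetilde{\A}_i$, which come from the truncating hyperplane $H(\xi,t)$ and from normals on $\partial\omega$, carry no normals in $\omega_0$. I would handle this by taking $\omega_0$ in the interior of $\omega$, and by noting that the cut at $H_t$ has outer normal $\xi\notin \Oc$ and lies far from $\pmb\nu^{-1}_{\A_i}(\omega_0)$.
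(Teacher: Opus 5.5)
Your proposal is correct and follows essentially the paper's own argument: a uniform upper bound on $b(\A_i)$ puts all inverse Gauss images over $\overline{\omega}(\tau/2)$ in a fixed $C_t$. The Wulff shapes $[C,\overline{\omega}(\tau/2),-h(\A_i\cap C_t,\cdot)]$ then carry the same $L_p$ surface area measure as $\A_i$ on the support of the test function, and Lemmas \ref{L56} and \ref{weak-convergence-Sp} finish the proof.

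The lower bound on $b(\A_i)$ is not needed. Your localization worry is also moot: $\widetilde{\A}_i$ uses only normals in $\omega$ and satisfies $h_C(\widetilde{\A}_i,\cdot)=h_C(\A_i,\cdot)$ there, so it has no truncation facet, and equal support functions force equal faces $F(\cdot,u)$ for every $u$ in the relative interior of $\omega$.
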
 
\begin{proof} Let $\{\A_i\}_{i \in \N_0} \subset \Ln$ be such that $\A_i \to \A_0$ and let $p\in \R.$  
By \eqref{vague-convergence}, it suffices to show 
\begin{equation*}
\lim_{i\rightarrow \infty} \int_{\Oc}g(u)dS_{n-1, p} (\A_i, u)= \int_{\Oc}g(u)dS_{n-1, p} (\A_0, u) \ \ \mathrm{for\ all}\ \ g\in \mathbf{C}_c(\Oc).
\end{equation*} To this end, let $g \in \mathbf{C}_c(\Oc)$ with compact support $\omega \subset \Oc$. Let $$\tau=\inf \{\angle (u, v): u \in \omega \,\,\text{and}\,\, v \in \partial \Oc\}.$$ As  $\tau$ is the infimum of a positive continuous function over a compact set $\omega \times \partial \Oc$, one gets  $\tau>0$. 
Recall that $\overline{\omega}(\tau)=\{u \in \Oc: \delta_C(u)\geq \tau\}$. 
It can be observed that $$\omega \subseteq \overline{\omega}(\tau) \subset \overline{\omega}(\tau/2) \subset \Oc.$$ Since $\A_i \to \A_0$, by Definition \ref{con-def}, there is $\beta>0$ such that $b(\A_i) \leq \beta$ for all $i \in \N_0$. Applying \cite[Lemma 7]{Sch24} to each $\A_i$, one has, for each $i \in \N_0$,
\begin{equation*}
\pmb \nu^{-1}_{\A_i}(\omega) \subset \pmb \nu^{-1}_{\A_i} (\overline{\omega}(\tau/2)) \subset \frac{b(\A_i)}{\sin (\tau/2)} B_n \subset \frac{\beta}{\sin (\tau/2)} B_n.
\end{equation*} 
Let $t_\tau>\frac{\beta}{\sin (\tau/2)}$ be a constant, depending only on compact set $\omega$, such that 
\begin{equation*}
\frac{\beta}{\sin (\tau/2)} B_n \cap C \subset C_{t_\tau} =C\cap H^-_{t_\tau}.
\end{equation*} Then,  for each $i\in \N_0$,
\begin{equation} \label{reverse-Gauss-image}
\pmb \nu^{-1}_{\A_i}(\omega) \subset C_{t_\tau} \ \ \mathrm{and} \ \ \pmb \nu^{-1}_{\A_i}(\omega)=\pmb \nu^{-1}_{\A_i \cap C_{t_\tau}}(\omega). 
\end{equation} 
 Together with the fact that $g \in \mathbf{C}_c(\Oc)$ has compact support $\omega \subset \Oc$, this further implies 
\begin{align*}
\int_{\Oc}g(u)dS_{n-1, p} (\A_i, u)=\int_{\omega} g(u)dS_{n-1, p}(\A_i, u)
=\int_{\omega} g(u)dS_{n-1, p}(\A_i \cap C_{t_\tau}, u).
\end{align*}  
Without loss of generality, we can further choose $t_\tau$ big enough so that 
$\A_i\cap C_{t_\tau}$ are convex bodies  for all $i \in \N_0$. Note that $\A_i \cap C_{t_\tau} \to \A_0 \cap C_{t_\tau}$, and thus, $h(\A_i \cap C_{t_\tau}, \cdot) \to h(\A_0 \cap C_{t_\tau}, \cdot)$ uniformly on $\sphere$ (and in particular on $\overline{\omega}(\tau/2)$). Based on \eqref{Wullf-shape}, we can define $C$-determined sets   
\begin{equation}
\widetilde{\A}_i=[C, \overline{\omega}(\tau/2), -h(\A_i \cap C_{t_\tau}, \cdot)] \in \mathcal{K}(C, \overline{\omega}(\tau/2)) \ \  \text{for} \ \ i\in \mathbb{N}_0. \label{construct-1} 
\end{equation} 
It follows from Lemma \ref{L56} that $\widetilde{\A}_i \to \widetilde{\A}_0$, and thus Lemma \ref{weak-convergence-Sp} yields $S_{n-1, p}(\widetilde{\A}_i, \cdot) \to S_{n-1, p}(\widetilde{\A}_0, \cdot)$ weakly on $\overline{\omega}(\tau/2)$ for $p \in \R$. By \eqref{def-weakly}, for every continuous $f: \overline{\omega}(\tau/2) \to \R$, one has
\begin{equation*} \lim_{i\rightarrow \infty}
\int_{\overline{\omega}(\tau/2)}f(u)dS_{n-1, p} (\widetilde{\A}_i, u)= \int_{\overline{\omega}(\tau/2)}f(u)dS_{n-1, p} (\widetilde{\A}_0, u)).    
\end{equation*}In particular, this holds for $g \in \mathbf{C}_c(\Oc)$ as its support $\omega\subset \overline{\omega}(\tau/2)$ is compact in $\Oc.$ That is, \begin{equation} \lim_{i\rightarrow \infty}
\int_{ \omega} g(u)dS_{n-1, p} (\widetilde{\A}_i, u)= \int_{ \omega} g(u)dS_{n-1, p} (\widetilde{\A}_0, u)). \label{limit-construct}    \end{equation} It follows from \eqref{reverse-Gauss-image} and \eqref{construct-1} that, for each $i\in \mathbb{N}_0$, $$\int_{ \omega} g(u)dS_{n-1, p} (\widetilde{\A}_i, u)=\int_{ \omega} g(u)dS_{n-1, p} ( \A_i, u). $$ Together with \eqref{limit-construct}, one gets $$\lim_{i\rightarrow \infty}
\int_{ \omega} g(u)dS_{n-1, p} ( \A_i, u)= \int_{ \omega} g(u)dS_{n-1, p} ( \A_0, u)).$$ The desired vague convergence that $ S_{n-1, p}(\A_i, \cdot)\to S_{n-1, p}(\A_0, \cdot)$ on  $\Oc$ then follows from the  arbitrariness of $g\in \mathbf{C}_c(\Oc)$.  
\end{proof}

   Based on Lemma \ref{vague-con-Sp}, we can get the following continuity result, and hence solve Problem \ref{con?} under the mild conditions on $\nu_i$ for $i\in \mathbb{N}_0$. 

\begin{theorem}\label{con-mu}
Let $0\leq p\leq 1$. Assume that $\nu_i$, $i\in \N_0$ are nonzero finite Borel measures such that  $\nu_i$ converges to $\nu_0$ vaguely on $\Oc$ and  $\sup \{\nu_i(\Oc): i \in \N_0\}<\infty.$   If,  for each $i\in \mathbb{N}_0$,  $\B_i$ is the unique $C$-close set solving Problem \ref{Lp-Minkowski-unbounded} (i.e., $\nu_i=S_{n-1, p}(\B_i, \cdot)$), then $\B_i \to \B_0$ as $i\to \infty$.
\end{theorem}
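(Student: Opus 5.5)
The plan is to argue by subsequences, as in the proof of Theorem \ref{m1}: it suffices to show that every subsequence of $\{\B_i\}_{i\in\N}$ has a further subsequence converging to $\B_0$. The ingredients are uniform bounds on $b(\B_i)$ via Lemma \ref{b_0<p<1}, compactness via Theorem \ref{Blas-unb-1}, identification of the limit via Lemma \ref{vague-con-Sp} together with uniqueness of vague limits, and finally the uniqueness part of Theorem \ref{ex&uniC-close}.

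\textbf{Step 1 (uniform bounds).} Since $\nu_0\neq 0$ and $\Oc=\bigcup_{\tau>0}\overline{\omega}(\tau)$, I fix $\tau>0$ with $\nu_0(\mathrm{int}\,\overline{\omega}(\tau))>0$. I choose $f\in \mathbf{C}_c(\Oc)$ with $0\le f\le 1$, $\mathrm{supp} f\subset \overline{\omega}(\tau/2)$, $f=1$ on $\overline{\omega}(\tau)$, and $\int f\,d\nu_0>0$. Vague convergence gives $\nu_i(\overline{\omega}(\tau/2))\ge \int f\,d\nu_i\ge s_0:=\tfrac12\int f\,d\nu_0>0$ for all large $i$. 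Discarding finitely many indices, Lemma \ref{b_0<p<1} with $\tau/2$ gives the lower bound $b(\B_i)\ge (s_0/S(C_{t'}))^{1/(n-p)}$.

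For the upper bound, \eqref{unif-upp-01} gives $b(\B_i)^{n-p}\int_{\On}dv\le \nu_i(\Oc)\le M:=\sup_i\nu_i(\Oc)<\infty$. (The inequality as written in the lemma has its direction reversed, but this is what the argument actually yields.) So $0<\beta_0\le b(\B_i)\le\beta_1$ uniformly in $i$.

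\textbf{Step 2 (compactness and $C$-closeness of the limit).} By Theorem \ref{Blas-unb-1}, any subsequence has a further subsequence $\B_{i_k}\to \widetilde\B$, with $\widetilde\B$ a $C$-compatible set. To see that $\widetilde\B$ is $C$-close, I use \eqref{volume-Lp} and \eqref{hb} with $0\le p\le1$:
\[
V_n(C\setminus \B_{i_k})=\frac1n\int_{\Oc}(-h_C(\B_{i_k},u))^p\,d\nu_{i_k}(u)\le \frac{\beta_1^p M}{n}.
\]
Lower semicontinuity \cite[Lemma 2]{Sch24} then gives $V_n(C\setminus\widetilde\B)<\infty$.

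\textbf{Step 3 (identifying the limit).} Lemma \ref{vague-con-Sp} gives $S_{n-1,p}(\B_{i_k},\cdot)\to S_{n-1,p}(\widetilde\B,\cdot)$ vaguely on $\Oc$. Since $\nu_{i_k}=S_{n-1,p}(\B_{i_k},\cdot)\to\nu_0$ vaguely and vague limits are unique (Riesz), $S_{n-1,p}(\widetilde\B,\cdot)=\nu_0$. This is an equality of Radon measures on $\Oc$. The uniqueness in Theorem \ref{ex&uniC-close} then yields $\widetilde\B=\B_0$, and the subsequence principle finishes the proof.

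\textbf{Main obstacle.} The delicate step is Step 1, specifically the lower bound. Vague convergence gives no control on compact sets in the naive closed-set direction, so the mass bound on $\overline{\omega}(\tau/2)$ has to be extracted through a cutoff function, as done above. The finite-mass hypothesis $\sup_i\nu_i(\Oc)<\infty$ is exactly what supplies the upper bound on $b(\B_i)$ and the finiteness of $V_n(C\setminus\widetilde\B)$.
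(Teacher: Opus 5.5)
Your proposal is correct and follows the paper's proof essentially step for step: uniform two-sided bounds on $b(\B_i)$ obtained as in Lemma \ref{widet0}, then Blaschke selection, Lemma \ref{vague-con-Sp}, and uniqueness. Your Step 2 is a useful addition, since it checks that the subsequential limit is $C$-close before uniqueness is invoked, which the paper leaves implicit. Your parenthetical claim of a reversed inequality is unfounded but harmless, because $(\int_{\On}dv/\mu(\Oc))^{1/(p-n)}=(\mu(\Oc)/\int_{\On}dv)^{1/(n-p)}$ is exactly the bound you derive.
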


In order to prove Theorem \ref{con-mu}, the next lemma regarding the uniform upper and lower bounds of $\{b(\B_i)\}_{i \in \N_0}$ is crucial. 

\begin{lemma}\label{widet0}  
Let $0\leq p\leq 1$, and let $\nu_i$, $i\in \N_0$ be nonzero finite Borel measures such that $\nu_i$ converges to $\nu_0$ vaguely on $\Oc$ and $\sup \{\nu_i(\Oc): i \in \N_0\}<\infty.$ Then, there exist two positive constants $\zeta_0$ and $\zeta_1$,  such that, for all $i \in \N_0$,  $$\zeta_0\leq b(\B_i) \leq \zeta_1.$$  
\end{lemma}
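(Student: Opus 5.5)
The plan is to reduce everything to Lemma \ref{b_0<p<1}. Its proof shows that the upper bound on $b(\A)$ depends only on $C$ and on a lower bound for $\mu(\Oc)$. The lower bound depends only on $C$, $\tau$ and a lower bound for the mass $s_0$ on $\overline{\omega}(\tau)$. So we need a single $\tau>0$ and a single $s>0$ such that $\nu_i(\overline{\omega}(\tau))\geq s$ for all large $i$. The finitely many remaining indices each have $0<b(\B_i)<\infty$, and enlarging the constants handles them.

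\textbf{Choice of $\tau$ and a test function.} Since $\nu_0$ is nonzero and $\Oc=\bigcup_{\tau>0}\overline{\omega}(\tau)$, pick $\tau>0$ with $\nu_0(\overline{\omega}(2\tau))>0$. Then choose $f\in\mathbf{C}_c(\Oc)$ with $0\le f\le 1$, $f\equiv 1$ on $\overline{\omega}(2\tau)$ and $\mathrm{supp} f\subset\overline{\omega}(\tau)$. This is possible by Urysohn, since $\overline{\omega}(2\tau)$ is compact and lies in the interior of the compact set $\overline{\omega}(\tau)$ (use $f(u)=\min\{1,\max\{0,(\delta_C(u)-\tau)/\tau\}\}$). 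Vague convergence gives
\begin{equation*}
\liminf_{i\to\infty}\nu_i(\overline{\omega}(\tau))\ \ge\ \lim_{i\to\infty}\int_{\Oc} f\,d\nu_i=\int_{\Oc} f\,d\nu_0\ \ge\ \nu_0(\overline{\omega}(2\tau))=:2s>0.
\end{equation*}
Hence there is $i_0$ with $\nu_i(\overline{\omega}(\tau))\ge s$ for $i\ge i_0$. It follows that $\nu_i(\Oc)\ge s$ for these $i$ as well.

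\textbf{Bounds from Lemma \ref{b_0<p<1}.} For the upper bound, inequality \eqref{unif-upp-01} gives $b(\B_i)^{n-p}\int_{\On}dv\le \nu_i(\Oc)\, b(\B_i)^{p}\cdot b(\B_i)^{-p}$. More precisely, $\nu_i(\Oc)\ge b(\B_i)^{n-p}\int_{\On}dv$. Therefore $b(\B_i)\le\big(\sup_j\nu_j(\Oc)/\int_{\On}dv\big)^{1/(n-p)}$ for all $i$. This uses the assumption $\sup_j\nu_j(\Oc)<\infty$, and it gives a uniform upper bound with no need to discard indices. (Note that the statement of Lemma \ref{b_0<p<1} writes the exponent in terms of $\mu(\Oc)$ inverted, but the derivation \eqref{unif-upp-01} actually yields this form.) For the lower bound, \eqref{formula-s0-1} applied with $\mu=\nu_i$ gives $b(\B_i)\ge (s/S(C_{t'}))^{1/(n-p)}$ for $i\ge i_0$, where $t'$ depends only on $C$ and $\tau$ (via \cite[Lemma 9]{Sch24}). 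If $p$ were allowed to vary, we would take the minimum over $n-p\in[n-1,n]$; here $p$ is fixed. Finally, set $\zeta_0$ to be the minimum of this bound and $b(\B_i)$ for $i<i_0$, and $\zeta_1$ to be the upper bound. Both are positive, since each $\B_i$ is $C$-close with $o\notin\B_i$.

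\textbf{Main obstacle.} The main obstacle is the lower-bound step: vague convergence alone does not control mass escaping to $\partial\Oc$, and it gives no information about closed sets directly. This is why we use the smooth cutoff between $\overline{\omega}(2\tau)$ and $\overline{\omega}(\tau)$ to get a uniform positive mass on one fixed compact set. Mass escaping toward $\partial\Oc$ affects only the upper bound, and that is handled by the uniform total mass bound.
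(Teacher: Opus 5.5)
Your proposal is correct and follows essentially the same route as the paper. A compactly supported cutoff function plus vague convergence gives a uniform positive lower bound on $\nu_i(\overline{\omega}(\tau))$ for large $i$, which is fed into \eqref{formula-s0-1}, and the upper bound comes from \eqref{unif-upp-01} together with $\sup_i\nu_i(\Oc)<\infty$. Your explicit handling of the finitely many indices $i<i_0$ via $b(\B_i)>0$ actually makes rigorous what the paper does ``without loss of generality'' (which is not automatic, since $\nu_i(\overline U)$ may vanish for small $i$), while your parenthetical about an ``inverted'' exponent in Lemma \ref{b_0<p<1} is unnecessary, because $(\int_{\On}dv/\mu(\Oc))^{1/(p-n)}=(\mu(\Oc)/\int_{\On}dv)^{1/(n-p)}$.
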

\begin{proof} As $\nu_0$ is a nonzero finite Borel measure on $\Oc$, there exists a compact set $\omega_0\subset \Oc$ such that $\nu_0(\omega_0)>0.$   Let $U \subset \Oc$ be an open set satisfying  $\omega_0 \subset U \subset \overline{U}\subset \Oc$. Let $g \in \mathbf{C}_c(\Oc)$ be such that $0\leq g\leq 1$ on $U$, $g\equiv 1$ on $\omega_0$, and the (compact) support of $g$ is contained in $U$. Since $\nu_i \to \nu_0$ vaguely on $\Oc$, by \eqref{vague-convergence}, one has \begin{equation*}
0<\nu_0(\omega_0) \leq \int_{\Oc} g(u)d\nu_0(u) =\lim_{i \to \infty} \int_{\Oc} g(u)d\nu_i(u) \leq \liminf_{i \to \infty} \nu_i(U).
\end{equation*} Thus, for $\epsilon_0= \frac{\nu_0(\omega_0)}{2}>0,$ one can find a constant $i_0=i_0(\epsilon_0)\in \mathbb{N},$ such that $$\inf_{i \geq i_0} \nu_i(U)\geq \nu_0(\omega_0)-\epsilon_0=\frac{\nu_0(\omega_0)}{2}>0. $$ Consequently, $\nu_i(U)\geq \frac{\nu_0(\omega_0)}{2}>0$ for all $i\geq i_0.$ Without loss of generality, we can assume that $$m_0=\inf_{i\geq 1} \nu_i(\overline{U})>0. $$ 

As $\overline{U}\subset \Oc$, one can find a constant $\tau_0>0$ such that $\overline{\omega}(\tau_0)$ (see its definition in \eqref{def-omega-tau-1}) is a compact subset of $\Oc$ with $\overline{U}\subset \overline{\omega}(\tau_0)$.  Employing the first inequality in \eqref{formula-s0-1} to each $\B_i$, one gets,  for $0\leq p \leq 1$,  
\begin{align*}
b(\B_i)\geq \left( \frac{S_{n-1, p}(\B_i, \overline{\omega}(\tau_0))}{S(C_{\widetilde{t'}})} \right)^{\frac{1}{n-p}}=\left( \frac{\nu_i( \overline{\omega}(\tau_0))}{S(C_{\widetilde{t'}})} \right)^{\frac{1}{n-p}} \geq \left( \frac{m_0}{S(C_{\widetilde{t'}})} \right)^{\frac{1}{n-p}},
\end{align*} 
where $\widetilde{t'}$ is a constant depending only on $C$ and $\tau_0$. This gives the lower bound with $$\zeta_0=\left( \frac{m_0}{S(C_{\widetilde{t'}})}\right)^{\frac{1}{n-p}}.$$
 
For the upper bound, let $ 
M_0=\sup \{\nu_i(\Oc): i \in \N_0\}<\infty.$ 
Employing \eqref{unif-upp-01} to each $\B_i$, one gets, for all $i\in \N_0$,   
\begin{align*} 
M_0\geq \nu_i(\Oc)\geq  b(\B_i)^{n-p} \cdot \int_{\On}dv. 
\end{align*} 
This further gives, for all $i \in \N_0$, 
\begin{equation*}
b(\B_i)\leq\bigg(\frac{ \int_{\On}dv }{M_0} \bigg)^{\frac{1}{p-n}}:=\zeta_1. 
\end{equation*} This establishes the desired upper bound. 
\end{proof}  

\begin{proof}[Proof of Theorem \ref{con-mu}.] 
Let $0\leq p\leq 1$ and $i \in \N$. To show $\B_i \to \B_0$, it suffices to show that every subsequence of $\{\B_i\}_{i \in \N}$ has a convergent subsequence with its limit being $\B_0$. Let $\{\B_{i_j}\}_{j \in \N} \subset \{\B_i\}_{i\in \N}$. By Lemma \ref{widet0}, for all $j \in \N$, one has $\zeta_0 \leq b(\B_{i_j}) \leq \zeta_1$. Employing Theorem \ref{Blas-unb-1} to sequence $\{\B_{i_j}\}_{j \in \N}$ yields the existence of a convergent subsequence, namely, $\{\B_{i_{j_k}}\}_{k \in \N} \subset \{\B_{i_j}\}_{j \in \N}$ such that as $k \to \infty$,
\begin{equation*}
\B_{i_{j_k}} \to \widehat{\B}_0,
\end{equation*}
where $\widehat{\B}_0$ is a $C$-compatible set.
Together with Lemma \ref{vague-con-Sp}, one can get that 
\begin{equation*}
\nu_{i_{j_k}}=S_{n-1, p}(\B_{i_{j_k}}, \cdot) \to S_{n-1, p}(\widehat{\B}_0, \cdot),
\end{equation*} vaguely on $\Oc$. Thus, $\nu_0=S_{n-1, p}(\widehat{\B}_0, \cdot)$ because $\nu_{i_{j_k}} \to \nu_0$ vaguely on $\Oc$. Recall that $\nu_0=S_{n-1, p}(\B_0, \cdot)$. The uniqueness in Theorem \ref{ex&uniC-close} forces $\widehat{\B}_0=\B_0$, as desired.
\end{proof}

\section{Continuity with respect to \texorpdfstring{$p\in [0, 1]$}{}} 
\setcounter{equation}{0}
In this section, we aim to address the following problem.
\begin{problem}\label{continuity-Sp2?}
Let $\nu$ be a nonzero finite Borel measure on $\Oc$ and $0\leq p_i\leq 1$ for $i\in \N_0$. Let $\E_i$ be the unique $C$-close set, such that for $i\in \N_0$, 
\begin{equation} \label{continuity-p-i-1}
\nu=S_{n-1, p_i}(\E_i, \cdot).
\end{equation} 
Under what conditions on $\{p_i\}_{i \in \N_0}$ does $\E_i \to \E_0$?
\end{problem} 

We first concentrate on the case when the measure $\nu$ has its support concentrated on a compact set, say $\omega\subset \Oc.$ Let $p_i\in [0, 1]$ for all $i\in \N_0$ and let $\E_i$ be such that $\nu=S_{n-1, p_i}(\E_i, \cdot)$. Note that $\E_i\in \K$ for all $i\in \N_0,$ due to Theorem \ref{LpCfull}. For each $i\in \N_0$,   $\E_i $ is unique and satisfies  
\begin{equation} \label{Aip}
\E_i=\left[ \frac{1}{n}\int_{\omega} (-{h}_C(\E_i^0, u))^{p_i} d\nu(u)  \right]^\frac{1}{n-p_i} \E_i^0=c_i^\frac{1}{n-p_i} \E_i^0,
\end{equation}
with $\E_i^0 \in \mathscr{L}=\left\{ Q \in \mathscr{K}(C, \omega):\,\, V_n(C\, \backslash\, Q)=1 \right\}$. 

We now establish the continuity of the solutions to the $L_p$ Minkowski problem as $p_i\rightarrow p_0.$
  
\begin{theorem}\label{con-C-full-p}
Let $\nu$ be a nonzero finite Borel measure defined on $\Oc$ whose support is concentrated on a compact set $\omega \subset \Oc$. 
Let $i \in \mathbb{N}_0$ and $0\leq p_i\leq 1$ be such that $p_i\rightarrow p_0$. If  $\E_i \in \K $ for each $i\in \N_0$ is the unique solution to \eqref{continuity-p-i-1}, then  $\E_i \rightarrow \E_0$ as $i \rightarrow \infty$.
\end{theorem}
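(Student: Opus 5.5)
We follow the subsequence strategy of the proof of Theorem \ref{m1}: it suffices to show that every subsequence of $\{\E_i\}_{i\in\N}$ has a further subsequence converging to $\E_0$. Write $\E_i=c_i^{\frac{1}{n-p_i}}\E_i^0$ as in \eqref{Aip}, with $\E_i^0\in\mathscr{L}$. Given a subsequence $\{\E_{i_j}\}_{j\in\N}$, Lemma \ref{L2.2} yields a further subsequence $\{\E^0_{i_{j_k}}\}_{k\in\N}$ with $\E^0_{i_{j_k}}\to\widetilde{\E}^0_0\in\mathscr{L}$. By \eqref{unif}, $h_C(\E^0_{i_{j_k}},\cdot)\to h_C(\widetilde{\E}^0_0,\cdot)$ uniformly on $\omega$.

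Next we pass to the limit in the constants. By Lemma \ref{bound_Q} and \eqref{hb}, we have $0<-h_C(Q,\cdot)\le\beta_5$ on $\omega$ for every $Q\in\mathscr{L}$. Moreover $-h_C(\widetilde{\E}^0_0,\cdot)$ is continuous and positive on the compact set $\omega$, so it is bounded below there by some $m>0$. Uniform convergence then gives $-h_C(\E^0_{i_{j_k}},\cdot)\ge m/2$ on $\omega$ for all large $k$. On this region $[m/2,\beta_5]\times[0,1]$ the map $(s,q)\mapsto s^q$ is uniformly continuous, so
\begin{equation*}
(-h_C(\E^0_{i_{j_k}},\cdot))^{p_{i_{j_k}}}\to(-h_C(\widetilde{\E}^0_0,\cdot))^{p_0}
\end{equation*}
uniformly on $\omega$. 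Since $\nu$ is finite and fixed, it follows that
\begin{equation*}
c_{i_{j_k}}\to\widetilde{c}_0=\frac1n\int_\omega(-h_C(\widetilde{\E}^0_0,u))^{p_0}\,d\nu(u)>0.
\end{equation*}
Because $p\le1<n$, the exponents $\frac{1}{n-p_{i_{j_k}}}$ converge to $\frac{1}{n-p_0}$, and hence $c_{i_{j_k}}^{1/(n-p_{i_{j_k}})}\to\widetilde{c}_0^{\,1/(n-p_0)}$. Set $\widetilde{\E}_0=\widetilde{c}_0^{\,1/(n-p_0)}\widetilde{\E}^0_0\in\K$. Dilations by converging positive factors preserve convergence in the sense of Definition \ref{con-def}; this is the same step used in the proof of Theorem \ref{m1}. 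Therefore $\E_{i_{j_k}}\to\widetilde{\E}_0$.

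It remains to identify the limit. The main obstacle is that $S_{n-1,p_i}$ changes with $i$, so Lemma \ref{weak-convergence-Sp} cannot be applied directly. Instead we use Lemma \ref{L2} with varying integrands. Fix $g\in\mathbf{C}(\omega)$ and consider $f_k=g\,(-h_C(\E_{i_{j_k}},\cdot))^{1-p_{i_{j_k}}}$. The support functions $h_C(\E_{i_{j_k}},\cdot)$ converge uniformly on $\omega$ by \eqref{unif}, and they stay bounded away from $0$ and from $\infty$ there. By the same uniform continuity argument, $f_k\to g\,(-h_C(\widetilde{\E}_0,\cdot))^{1-p_0}$ uniformly on $\omega$. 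Lemma \ref{L2} then gives
\begin{equation*}
\int_\omega g\,d\nu=\int_\omega g\,dS_{n-1,p_{i_{j_k}}}(\E_{i_{j_k}},\cdot)\to\int_\omega g\,dS_{n-1,p_0}(\widetilde{\E}_0,\cdot).
\end{equation*}
Hence $\nu=S_{n-1,p_0}(\widetilde{\E}_0,\cdot)$ on $\omega$, and both measures vanish off $\omega$ because $S_{n-1}(Q,\Oc\setminus\omega)=0$ for $Q\in\K$. The uniqueness part of Theorem \ref{LpCfull}, valid since $0\le p_0\le1$, forces $\widetilde{\E}_0=\E_0$. This proves $\E_i\to\E_0$.
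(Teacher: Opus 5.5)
Your proposal is correct and follows essentially the same route as the paper: you normalize to $\mathscr{L}$, extract a convergent subsequence via Lemma \ref{L2.2}, pass to the limit in the constants $c_i$, and identify the limit using Lemma \ref{L2} with varying integrands together with the uniqueness in Theorem \ref{LpCfull}. The only difference is cosmetic. The paper invokes dominated convergence to get $c_{i_{j_k}}\to\widetilde{c}_0$, whereas you obtain it from uniform convergence of $(-h_C)^{p_{i_{j_k}}}$ on $\omega$, via uniform continuity of $(s,q)\mapsto s^q$ on a compact set bounded away from $s=0$; in doing so you make explicit the lower bound on $-h_C$ that the paper leaves implicit.
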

\begin{proof} 
Let $\{\E_{i_j}^0\}_{j\in \mathbb{N}}$ be an arbitrary subsequence of $\{\E_{i}^0\}_{i\in \mathbb{N}}$. The fact that $\{\E_{i_j}^0\}_{j\in \mathbb{N}} \subset \mathscr{L}$ for $j\in \N_0$ implies that, due to Lemma \ref{L2.2}, every subsequence of 
$\{\E_{i_j}^0\}_{j\in \mathbb{N}}$ admits a convergent subsequence whose limit is also in $\mathscr{L}$. Let $\{\E_{i_{j_k}}^0\}_{k \in \mathbb{N}}$ be such a subsequence and let $\E_{i_{j_k}}^0 \to \widetilde{\E}_0^0 \in \mathscr{L}$ as $k \to \infty$. It follows from \eqref{unif} that $h_C(\E_{i_{j_k}}^0, \cdot) \rightarrow h_C(\widetilde{\E}_0^0, \cdot)$ uniformly on compact set $\omega$. Since $0\leq p_i\leq 1$, a positive constant $m_1$ can be found so that  
\begin{equation} \label{uniform-bound-10}
(-h_C(\widetilde{\E}^0_0, u))^{p_0}\leq m_1 \,\,\,\text{and}\,\,\,-{h}_C(\E^0_{i_{j_k}}, u))^{p_{i_{j_k}}}\leq m_1 
\end{equation} 
for all $k\in \N_0$ and $u \in \omega$. Define
\begin{equation*}
\widetilde{\E}_0=\left[\frac{1}{n}\int_{\omega} (-h_C(\widetilde{\E}^0_0, u))^{p_0} d\nu(u)\right]^\frac{1}{n-p_0} \widetilde{\E}_0^0=\widetilde{c}_0^\frac{1}{n-p_0} \widetilde{\E}^0_0.
\end{equation*} 
 
We now claim that $\widetilde{\E}_0=\E_0.$ By  \eqref{Aip} and \eqref{uniform-bound-10}, the dominated convergence theorem implies that, if $p_i\to p_0$, 
\begin{align*} 
\lim_{k\rightarrow \infty} c_{i_{j_k}} =\lim_{k\rightarrow \infty} \frac{1}{n}\int_{\omega} (-{h}_C(\E^0_{i_{j_k}}, u))^{p_{i_{j_k}}} d\nu(u) = \frac{1}{n}\int_{\omega} (-h_C(\widetilde{\E}^0_0, u))^{p_0} d\nu(u)=\widetilde{c}_0>0.
\end{align*} 
This further implies that $\E_{i_{j_k}} \rightarrow \widetilde{\E}_0$ as $i \rightarrow \infty$ due to \eqref{Aip}. Consequently,  $h_C(\E_{i_{j_k}}, \cdot)\rightarrow h_C(\widetilde{\E}_0, \cdot)$ uniformly on $\omega$ by \eqref{unif} and  $S_{n-1}(\E_{i_{j_k}}, \cdot) \rightarrow S_{n-1}(\widetilde{\E}_0, \cdot)$ weakly on $\omega$ from Lemma \ref{L2}.  Thus, the following weak convergence holds, due to \eqref{continuity-p-i-1} and Lemma \ref{L2}, \begin{align*}
d\nu&=dS_{n-1, p_{i_{j_k}}}(\E_{i_{j_k}}, \cdot)\\ &=(-h_C(\E_{i_{j_k}}, \cdot))^{1-p_{i_{j_k}}}d S_{n-1}(\E_{i_{j_k}}, \cdot)  \\ & \rightarrow (-h_C(\widetilde{\E}_0, \cdot))^{1-p_0} dS_{n-1}(\widetilde{\E}_0, \cdot) \\
&=dS_{n-1,p_0}(\widetilde{\E}_0, \cdot).
\end{align*} 
In particular,  one gets  $\nu=S_{n-1,p_0}(\widetilde{\E}_0, \cdot)$. As $\nu=S_{n-1, p_0}(\E_0, \cdot)$, the uniqueness in Theorem \ref{LpCfull} forces $\widetilde{\E}_0=\E_0$,  as desired. 
\end{proof}

Our last result deals with the continuity of solutions to the $L_p$ Minkowski problem as $p_i\rightarrow p_0$ for nonzero finite Borel measure $\nu$ on $\Oc$ whose support may not concentrate on a compact set.  
Let $i \in \mathbb{N}_0$ and $0\leq p_i\leq 1$.
It follows from Theorem \ref{ex&uniC-close} that there exist unique $C$-close sets $\E_i$ so that, for each $i\in \N_0,$
\begin{equation}\label{Ei-Sp}
\nu=S_{n-1, p_i}(\E_i, \cdot).
\end{equation}

We shall need the following lemma. 
 
\begin{lemma}\label{vague-con-Spi}
Let $\{\A_i\}_{i \in \N_0} \subset \Ln$ and $0\leq p_i \leq 1$. If $\A_i \to \A_0$ and $p_i \to p_0$, then $S_{n-1, p_i}(\A_i, \cdot) \to S_{n-1, p_0}(\A_0, \cdot)$ vaguely on $\Oc$.  
\end{lemma}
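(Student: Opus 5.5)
The plan follows the proof of Lemma \ref{vague-con-Sp} closely. The only new ingredient is the varying exponent $p_i$, and it enters only through the density $(-h_C(\cdot,\cdot))^{1-p_i}$ with respect to $S_{n-1}$.

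Fix $g\in \mathbf{C}_c(\Oc)$ with compact support $\omega\subset\Oc$. As in that proof, let $\tau>0$ be the spherical distance from $\omega$ to $\partial\Oc$. Since $\A_i\to\A_0$, there is $\beta$ with $b(\A_i)\le\beta$ for all $i$. By \cite[Lemma 7]{Sch24} we get \eqref{reverse-Gauss-image} for some $t_\tau$, uniformly in $i\in\N_0$. We then define the $C$-determined sets $\widetilde{\A}_i=[C,\overline{\omega}(\tau/2),-h(\A_i\cap C_{t_\tau},\cdot)]\in\mathcal{K}(C,\overline{\omega}(\tau/2))$ exactly as in \eqref{construct-1}. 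Lemma \ref{L56} gives $\widetilde{\A}_i\to\widetilde{\A}_0$. The identity of inverse Gauss images on $\omega$ gives
\[
\int_{\omega} g\,dS_{n-1,p_i}(\A_i,\cdot)=\int_\omega g\,dS_{n-1,p_i}(\widetilde{\A}_i,\cdot)\quad\text{for all } i\in\N_0 .
\]
This identity holds for each fixed exponent, so it applies to $p_i$. The support function of $\widetilde{\A}_i$ agrees with $h_C(\A_i,\cdot)$ on $\omega$, because the points with normals in $\omega$ lie in $C_{t_\tau}$. Hence the density factor $(-h_C)^{1-p_i}$ is the same on both sides.

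It therefore suffices to show that
\[
\int_{\overline{\omega}(\tau/2)} f\,(-h_C(\widetilde{\A}_i,\cdot))^{1-p_i}\,dS_{n-1}(\widetilde{\A}_i,\cdot)\to \int_{\overline{\omega}(\tau/2)} f\,(-h_C(\widetilde{\A}_0,\cdot))^{1-p_0}\,dS_{n-1}(\widetilde{\A}_0,\cdot)
\]
for continuous $f$; we then take $f=g$, extended by zero. This is the main point, and it reduces to the second statement of Lemma \ref{L2}. We need $f_i:=f\,(-h_C(\widetilde{\A}_i,\cdot))^{1-p_i}\to f\,(-h_C(\widetilde{\A}_0,\cdot))^{1-p_0}$ uniformly on $\overline{\omega}(\tau/2)$. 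By \eqref{unif}, $h_C(\widetilde{\A}_i,\cdot)\to h_C(\widetilde{\A}_0,\cdot)$ uniformly on $\overline{\omega}(\tau/2)$. The limit is continuous and strictly negative on this compact set, so for large $i$ all values of $-h_C(\widetilde{\A}_i,\cdot)$ lie in a fixed interval $[a,b]$ with $0<a<b<\infty$. The map $(s,q)\mapsto s^{1-q}$ is uniformly continuous on $[a,b]\times[0,1]$. Combined with $p_i\to p_0$ and the uniform convergence of the support functions, this gives the required uniform convergence of $f_i$.

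Lemma \ref{L2} then yields the limit above. Chaining the equalities gives
\[
\int_{\Oc}g\,dS_{n-1,p_i}(\A_i,\cdot)\to\int_{\Oc}g\,dS_{n-1,p_0}(\A_0,\cdot),
\]
and since $g\in\mathbf{C}_c(\Oc)$ was arbitrary, this is the asserted vague convergence. The only delicate step is the uniform lower bound $a>0$ on $-h_C$, which is what keeps $s^{1-p_i}$ uniformly controlled. It comes from the strict negativity of $h_C(\widetilde{\A}_0,\cdot)$ on the compact set $\overline{\omega}(\tau/2)$.
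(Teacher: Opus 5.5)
Your proposal is correct and follows the paper's proof essentially step by step. You use the same localization to $\widetilde{\A}_i=[C,\overline{\omega}(\tau/2),-h(\A_i\cap C_{t_\tau},\cdot)]$ via \eqref{reverse-Gauss-image}, and then apply the second part of Lemma \ref{L2} to the uniformly convergent integrands $g\,(-h_C(\widetilde{\A}_i,\cdot))^{1-p_i}$; your justification of that uniform convergence, through joint uniform continuity of $(s,q)\mapsto s^{1-q}$ on $[a,b]\times[0,1]$, is slightly more explicit than the paper's.
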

\begin{proof}  Let $g\in \mathbf{C}_c(\Oc)$ with compact support $\omega \subset \Oc$. We will follow the notations and the established facts in the proof of  Lemma \ref{vague-con-Sp}. In particular,  $h(\A_i \cap C_{t_\tau}, \cdot) \to h(\A_0 \cap C_{t_\tau}, \cdot)$ uniformly on $S^{n-1}$. For $i\in \mathbb{N}_0$, let
\begin{equation*}
\widetilde{\A}_i=[C, \overline{\omega}(\tau/2), -h(\A_i \cap C_{t_\tau}, \cdot)] \in \mathcal{K}(C, \overline{\omega}(\tau/2)),
\end{equation*} 
where $\tau=\inf\{\angle(u, v): u \in \omega \ \ \text{and} \ \ v  \in \partial \Oc\}>0$.
From Lemmas \ref{L56} and \ref{L2}, $\widetilde{\A}_i \to \widetilde{\A}_0$ as $ i \to \infty$  and $S_{n-1}(\widetilde{\A}_i, \cdot) \to S_{n-1}(\widetilde{\A}_0, \cdot)$ weakly on $\overline{\omega}(\tau/2)$. By \eqref{unif},  $h_C(\widetilde{\A}_i, \cdot) \to h_C(\widetilde{\A}_0, \cdot)$ uniformly on $\overline{\omega}(\tau/2)$. 
Note that $h_C(\widetilde{\A}_0, \cdot)$ is continuous and strictly negative on $\overline{\omega}(\tau/2)$. Thus, for $0\leq p_0\leq 1$, $[-h_C(\widetilde{\A}_0, \cdot)]^{1-p_0}$ is finite and bounded below by a positive number on $\overline{\omega}(\tau/2)$. Consequently,  $$g(u)[-h_C(\widetilde{\A}_i, \cdot)]^{1-p_i} \to g(u)[-h_C(\widetilde{\A}_0, \cdot)]^{1-p_0}$$ uniformly on $\overline{\omega}(\tau/2))$ as $i \to \infty$.  
It follows from Lemma \ref{L2} that \begin{equation}\label{weak-con-Spi}
\lim_{i\rightarrow \infty} \int_{\omega}  g(u)[-h_C(\widetilde{\A}_i, u)]^{1-p_i}  dS_{n-1} (\widetilde{\A}_i, u)=\int_{\omega}  g(u)[-h_C(\widetilde{\A}_0, u)]^{1-p_0}  dS_{n-1} (\widetilde{\A}_0, u). 
\end{equation}  
By \eqref{reverse-Gauss-image}, one gets, for $i\in \mathbb{N}_0$,  
$$\int_{\omega}  g(u)[-h_C(\A_i, u)]^{1-p_i}  dS_{n-1} (\A_i, u)=\int_{\omega}  g(u)[-h_C(\widetilde{\A}_i, u)]^{1-p_i}  dS_{n-1} (\widetilde{\A}_i, u). $$ Combining with \eqref{Sp} and \eqref{weak-con-Spi}, the following holds: 
\begin{align*} 
\lim_{i\rightarrow \infty} \int_{\omega} g(u) dS_{n-1, p_i} (\A_i, u)
&=\lim_{i\rightarrow \infty} \int_{\omega} g(u)[-h_C(\A_i, u)]^{1-p_i}  dS_{n-1} (\A_i, u)\\
& =\lim_{i\rightarrow \infty} \int_{\omega}  g(u)[-h_C(\widetilde{\A}_i, u)]^{1-p_i}  dS_{n-1} (\widetilde{\A}_i, u)\\ 
& =\int_{\omega}  g(u)[-h_C(\widetilde{\A}_0, u)]^{1-p_0}  dS_{n-1} (\widetilde{\A}_0, u)\\ 
& =\int_{\omega}  g(u)[-h_C(\A_0, u)]^{1-p_0} dS_{n-1} (\A_0, u)\\ 
&= \int_{\omega}  g(u) dS_{n-1, p_0} (\A_0, u). 
\end{align*}  
The desired vague convergence of $S_{n-1, p_i}(\A_i, \cdot) \to S_{n-1, p_0}(\A_0, \cdot)$   on $\Oc$ follows directly from the  arbitrariness of $g\in \mathbf{C}_c(\Oc)$. 
\end{proof}
  
As in Section \ref{sec-3},  
we let $\tau>0$ be such that $\overline{\omega}(\tau) \subset \Oc$ is compact and $\nu(\overline{\omega}(\tau))>0$. 
The following lemma can be established following along the same lines as the proof of Lemma \ref{widet0}. 

\begin{lemma}\label{hatt0}
Suppose that $p_i \rightarrow p_0$ as $i \rightarrow \infty$ and $\E_i$ satisfies equation \eqref{Ei-Sp}. Then, there exist two positive constants $\zeta_2$ and $\zeta_3$ such that for each $i\in \N_0$,
$$\zeta_2\leq b(\E_i) \leq \zeta_3.$$  
\end{lemma}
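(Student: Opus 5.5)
The plan is to copy the two-sided argument of Lemma \ref{b_0<p<1} (as in Lemma \ref{widet0}), now with a single fixed measure $\nu$ and varying exponents $p_i\in[0,1]$. The only new issue is that every constant must be uniform in $p_i$. This is manageable because the exponents $\frac{1}{n-p_i}$ range over the compact interval $[\frac1n,\frac1{n-1}]$, with $n\ge 2$, so $n-p_i\geq n-1\geq 1$.

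For the lower bound, fix $\tau>0$ with $\overline{\omega}(\tau)\subset\Oc$ compact and $s_0:=\nu(\overline{\omega}(\tau))>0$, as chosen before the lemma. Since $S_{n-1,p_i}(\E_i,\overline{\omega}(\tau))=\nu(\overline{\omega}(\tau))=s_0$, inequality \eqref{formula-s0-1} applies to each $\E_i$ with the same $t'$, which depends only on $C$ and $\tau$ and not on $p$. This gives
\[
b(\E_i)\geq \Big(\frac{s_0}{S(C_{t'})}\Big)^{\frac{1}{n-p_i}}.
\]
Set $a=s_0/S(C_{t'})>0$. Then $a^{1/(n-p_i)}\geq \min\{a^{1/n},a^{1/(n-1)}\}$, and we take $\zeta_2$ to be this minimum. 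One point to check is that the derivation of \eqref{Lp-surface-addition-1} (normalizing to $b=1$ and using $-h_C\le b$ with exponent $1-p\ge0$) holds for every $p\le1$. It does, since the bound $[-h_C]^{1-p}\leq b^{1-p}=1$ only needs $1-p\geq 0$.

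For the upper bound, apply \eqref{unif-upp-01} to $\E_i$:
\[
\nu(\Oc)\geq b(\E_i)^{n-p_i}\int_{\On}dv .
\]
This yields $b(\E_i)\leq \kappa^{1/(n-p_i)}$ with $\kappa=\nu(\Oc)/\int_{\On}dv$. We then take $\zeta_3=\max\{\kappa^{1/n},\kappa^{1/(n-1)}\}$.

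The main point needing care is this uniformity in $p_i$. In particular, \cite[Lemma 9]{Sch24} must give $\pmb\nu^{-1}_{\A}(\overline{\omega}(\tau))\subset C_{t'}$ with $t'$ independent of $p$; this holds because that statement is purely geometric, about sets with $b(\A)=1$. Note also that the hypothesis $p_i\to p_0$ is not actually needed beyond $p_i\in[0,1]$. Everything else is a direct reuse of the earlier estimates.
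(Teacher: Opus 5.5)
Your proposal is correct and follows the paper's proof essentially verbatim: the lower bound comes from applying \eqref{Lp-surface-addition-1}/\eqref{formula-s0-1} to each $\E_i$ and taking $\zeta_2=\min\{a^{1/n},a^{1/(n-1)}\}$, and the upper bound comes from \eqref{unif-upp-01}. The only difference is that your $\zeta_3=\max\{\kappa^{1/n},\kappa^{1/(n-1)}\}$ is a slightly sharper form of the paper's $\max\{1,\kappa^{1/(n-1)}\}$; and, as you note, neither argument uses $p_i\to p_0$ beyond $p_i\in[0,1]$.
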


\begin{proof} 
We first prove the lower bound. By \eqref{Ei-Sp}, one can get $\nu(\overline{\omega}(\tau))=S_{n-1, p_i}(\E_i, \overline{\omega}(\tau))$ for $0\leq p_i\leq 1$.
Employing \eqref{Lp-surface-addition-1} to each $\E_i$, one has, for $0\leq p_i \leq 1$,   
\begin{align*} 
S(C_{t'}) \geq  b(\E_i)^{p_i-n}S_{n-1, p_i}(\E_i, \overline{\omega}(\tau)) = b(\E_i)^{p_i-n} \nu (\overline{\omega}(\tau)). 
\end{align*} 
A rearrangement of above inequality yields that, for all $i \in \N_0$,
\begin{equation*}
b(\E_i) \geq \left( \frac{\nu (\overline{\omega}(\tau))}{S(C_{t'})}\right)^{\frac{1}{n-p_i}}\geq \min\bigg\{ \left( \frac{\nu (\overline{\omega}(\tau))}{S(C_{t'})}\right)^{\frac{1}{n-1}},  \left( \frac{\nu (\overline{\omega}(\tau))}{S(C_{t'})}\right)^{\frac{1}{n}}\bigg\}.
\end{equation*} 
This gives the lower bound with $$\zeta_2=\min\bigg\{ \left( \frac{\nu (\overline{\omega}(\tau))}{S(C_{t'})}\right)^{\frac{1}{n-1}},  \left( \frac{\nu (\overline{\omega}(\tau))}{S(C_{t'})}\right)^{\frac{1}{n}}\bigg\}>0.$$
 
For the upper bound, employing  \eqref{unif-upp-01} to each $\E_i$, with the help of \eqref{Ei-Sp}, one has, for all $i \in \N_0$ and $0\leq p_i\leq 1$,   
\begin{align*} 
\nu(\Oc)\geq b(\E_i)^{n-p_i} \cdot \int_{\On}dv. 
\end{align*} 
This further gives, for all $i \in \N_0$, 
\begin{equation*}
b(\E_i)\leq \max\bigg\{1, \left(\frac{1}{\nu(\Oc)} \int_{\On}dv\right)^{\frac{1}{1-n}}\bigg\}. 
\end{equation*} 
The desired upper bound follows if we let $$\zeta_3= \max\bigg\{1, \left(\frac{1}{\nu(\Oc)} \int_{\On}dv\right)^{\frac{1}{1-n}}\bigg\}.$$ This completes the proof.  
\end{proof}

We are now in the position to prove our main theorem regarding the continuity of solutions to the $L_p$ Minkowski problem for $C$-close sets as $p_i\rightarrow p_0$. 

\begin{theorem}\label{con-p}  
Let $0\leq p_i\leq 1$ for $i \in \mathbb{N}_0$, and let $\nu$ be a nonzero finite Borel measure  defined on $\Omega_{C^\circ}$. Suppose that $\E_i$ for $i\in \mathbb{N}_0$ solves Problem \ref{Lp-Minkowski-unbounded} for $p_i$ (i.e., $\nu=S_{n-1, p_i}(\E_i, \cdot)$). Then $\E_i \rightarrow \E_0$  as $p_i\to p_0$. 
\end{theorem}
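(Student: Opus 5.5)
The argument is the same subsequence-compactness-uniqueness scheme used in the proof of Theorem \ref{con-mu}, with Lemma \ref{widet0} replaced by Lemma \ref{hatt0} and Lemma \ref{vague-con-Sp} replaced by Lemma \ref{vague-con-Spi}. To show $\E_i\to\E_0$, it suffices to prove that every subsequence $\{\E_{i_j}\}_{j\in\N}$ of $\{\E_i\}_{i\in\N}$ has a further subsequence converging to $\E_0$.

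First, fix an arbitrary subsequence $\{\E_{i_j}\}_{j\in\N}$. By Lemma \ref{hatt0}, $\zeta_2\leq b(\E_{i_j})\leq \zeta_3$ for all $j$, with constants independent of $j$; this uses only $p_i\in[0,1]$ and that the measure $\nu$ is fixed. Theorem \ref{Blas-unb-1} then gives a further subsequence $\{\E_{i_{j_k}}\}_{k\in\N}$ and a $C$-compatible set $\widehat{\E}_0$ with $\E_{i_{j_k}}\to\widehat{\E}_0$ as $k\to\infty$.

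Second, since $p_{i_{j_k}}\to p_0$, Lemma \ref{vague-con-Spi} gives
\[
\nu=S_{n-1,p_{i_{j_k}}}(\E_{i_{j_k}},\cdot)\to S_{n-1,p_0}(\widehat{\E}_0,\cdot)
\]
vaguely on $\Oc$. The left side is the constant sequence $\nu$, and vague limits are unique by the Riesz representation theorem. Hence $S_{n-1,p_0}(\widehat{\E}_0,\cdot)=\nu$ on $\Oc$.

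Third, and this is the step needing care, we must check that $\widehat{\E}_0$ is a $C$-close set, so that the uniqueness part of Theorem \ref{ex&uniC-close} applies. Lemma \ref{vague-con-Spi} only produces a $C$-compatible limit, while uniqueness is stated for $C$-close sets.
\begin{itemize}
\item By \eqref{volume-Lp} and the bound \eqref{hb}, for each $k$,
\[
V_n(C\setminus\E_{i_{j_k}})=\frac1n\int_{\Oc}(-h_C(\E_{i_{j_k}},u))^{p_{i_{j_k}}}\,d\nu(u)\leq \frac1n\max\{1,\zeta_3\}\,\nu(\Oc).
\]
\item By \cite[Lemma 2]{Sch24}, $V_n(C\setminus \widehat{\E}_0)\leq\liminf_k V_n(C\setminus\E_{i_{j_k}})<\infty$.
\item To see $o\notin\widehat{\E}_0$, use the uniform lower bound $b(\E_{i_{j_k}})\geq\zeta_2$ and pass to the limit in $\E_{i_{j_k}}\cap C_{t}\subset C_t\setminus C_{\zeta_2/2}$, as in the proof of Theorem \ref{ex&uniC-close}.
\end{itemize}
Thus $\widehat{\E}_0$ is $C$-close.

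Since $\nu=S_{n-1,p_0}(\E_0,\cdot)$ as well, uniqueness in Theorem \ref{ex&uniC-close} gives $\widehat{\E}_0=\E_0$. As the subsequence was arbitrary, $\E_i\to\E_0$.
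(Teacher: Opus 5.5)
Your proof is correct and follows the paper's own argument: uniform bounds on $b(\E_i)$ from Lemma~\ref{hatt0}, a convergent subsequence via Theorem~\ref{Blas-unb-1}, identification of the limit measure through Lemma~\ref{vague-con-Spi}, and uniqueness from Theorem~\ref{ex&uniC-close}. Your third step checks that $\widehat{\E}_0$ has finite co-volume, so that uniqueness among $C$-close sets actually applies; the paper passes over this point silently, and it is a worthwhile addition (the check $o\notin\widehat{\E}_0$ is redundant, since Theorem~\ref{Blas-unb-1} already gives a $C$-compatible limit).
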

\begin{proof}
Let $0\leq p_i\leq 1$ and $i \in \N_0$. In order to show $\E_i \to \E_0$, we are required to show that every subsequence of $\{\E_i\}_{i \in \N}$ has a convergent subsequence with its limit being $\E_0$. Let $\{\E_{i_j}\}_{j \in \N} \subset \{\E_i\}_{i\in \N}$. By Lemma \ref{hatt0}, for all $j \in \N$, it follows that $\zeta_2 \leq b(\E_{i_j}) \leq \zeta_3$. Applying Theorem \ref{Blas-unb-1} to the sequence $\{\E_{i_j}\}_{j \in \N}$ implies that there exists a convergent subsequence, say $\{\E_{i_{j_k}}\}_{k \in \N} \subset \{\E_{i_j}\}_{j \in \N}$ such that $\E_{i_{j_k}} \to \widehat{\E}_0$ as $k \to \infty$. Together with Lemma \ref{vague-con-Spi}, one has  $S_{n-1, p_i}(\E_{i_{j_k}}, \cdot) \to S_{n-1, p_0}(\widehat{\E}_0, \cdot)$ vaguely on $\Oc$
as $k \to \infty$. Therefore $\nu=S_{n-1, p_0}(\widehat{\E}_0, \cdot)=S_{n-1, p_0}(\E_0, \cdot)$. The uniqueness in Theorem \ref{ex&uniC-close} gives $\widehat{\E}_0=\E_0$ and this completes the proof.
\end{proof}

\noindent{\bf{Acknowledgment.} }  The research of DY has been supported by an NSERC grant, Canada. The research of BZ has been supported by NSFC (No. 12371060), the Shaanxi Fundamental Science Research Project for Mathematics and Physics (No. 22JSZ012).

\noindent Wen Ai, \ \ {\tt wena@mun.ca} \\ 
\textit{Department of Mathematics and Statistics, Memorial
University of Newfoundland, St. John's, Newfoundland A1C 5S7, Canada.}
\vskip 2mm

\noindent Deping Ye, \ \ {\tt deping.ye@mun.ca}\\
\textit{Department of Mathematics and Statistics, Memorial
University of Newfoundland, St. John's, Newfoundland A1C 5S7, Canada.}
\vskip 2mm 

\noindent Baocheng Zhu, \ \ {\tt bczhu@snnu.edu.cn} \\ 
\textit{Department of Mathematics and Statistics, Shaanxi Normal University, Xi'an, 710119, China. }
\end{document}